\tikzstyle{new}=[circle,  minimum width=4pt,inner sep=0pt, fill=black,draw=black]
\tikzstyle{none}=[circle,fill=white,draw=black]
\tikzstyle{n}=[shape=rectangle,minimum width=1pt,inner sep=0pt, fill=none,draw=none]
\tikzstyle{emph}=[circle,  minimum width=4pt,inner sep=0pt, fill=magenta,draw=magenta]
\tikzset{directed/.style={decoration={
  markings,
  mark=at position .6 with {\arrow{>}}},postaction={decorate}}}
\tikzset{identify1/.style={decoration={
  markings,
  mark=at position .7 with {\arrow{>}}},postaction={decorate}}}
\tikzset{identify2/.style={decoration={
  markings,
  mark=at position .6875 with {\arrow{>}},
  mark=at position .7125 with {\arrow{>}}},postaction={decorate}}}
\def\cleardoublepage{
  \clearpage
  \if@twoside\ifodd\c@page\else
  \hbox{}
  \thispagestyle{empty}
  \newpage
  \if@twocolumn\hbox{}\newpage\fi
  \fi\fi
  }
\newtheoremstyle{plainsl}%
	{\topsep}
	{\topsep}
	{\slshape} 
	{}
	{\normalfont\bfseries}
	{.}
	{ }
	{}
\theoremstyle{plainsl}
\newtheorem{theorem}{Theorem}[section]
\newtheorem{lemma}[theorem]{Lemma}
\newtheorem{proposition}[theorem]{Proposition}
\newtheorem{corollary}[theorem]{Corollary}
\newtheorem{conjecture}[theorem]{Conjecture}
\newtheorem{openprob}[theorem]{Open Problem}
\theoremstyle{remark}
\numberwithin{equation}{section}
\newcommand\revarc[1]{{\mkern5mu\overleftarrow{\mkern1mu#1}}}
\newcommand\bmat[1]{\begin{bmatrix} #1 \end{bmatrix}}
\DeclareMathOperator\rk{rk}
\DeclareMathOperator\tr{tr}
\DeclareMathOperator\Aut{Aut}
\DeclareMathOperator\vecspan{span}
\DeclareMathOperator{\col}{col}
\newcommand\cx{{\mathbb C}}
\newcommand\ints{{\mathbb Z}}
\newcommand\rats{{\mathbb Q}}
\newcommand\cA{{\mathcal A}}
\newcommand\cF{{\mathcal F}}
\newcommand\cJ{{\mathcal J}}
\newcommand\cK{{\mathcal K}}
\newcommand\cW{{\mathcal W}}
\newcommand\Ze{{\mathbf e}}
\newcommand\Zv{{\mathbf v}}
\newcommand\Zw{{\mathbf w}}
\newcommand\Zx{{\mathbf x}}
\newcommand\ones{{\mathbf 1}}
\newcommand\ket[1]{| #1 \rangle}
\newcommand\inprod[2]{\langle#1,#2 \rangle}
\newcommand\Mhat{\widehat{M}}
\newcommand\Nhat{\widehat{N}}
\newcommand\Chat{\widehat{C}}
\definecolor{vcolour}{RGB}{230,97,0}
\definecolor{kcolour}{RGB}{93,58,155}
\newcommand{\arxiv}[1]{\href{https://arxiv.org/abs/#1}{\texttt{arXiv:#1}}}
\newcommand\krystalsays[1]{{\bf \textcolor{kcolour}{} }}
\newcommand\vincentsays[1]{{\bf \textcolor{vcolour}{} }}
\title{Perfect state transfer in quantum walks \\on orientable maps}
\author{Krystal Guo\thanks{Korteweg-de Vries Institute for Mathematics, University of Amsterdam, Amsterdam, The Netherlands.  QuSoft (Research center for Quantum software \& technology), Amsterdam, The Netherlands. \texttt{k.guo@uva.nl}} \and Vincent Schmeits\thanks{Korteweg-de Vries Institute for Mathematics, University of Amsterdam, Amsterdam, The Netherlands. \texttt{v.f.schmeits@uva.nl}}}
\date{November 23, 2022}
\begin{document}

\maketitle
\vspace{-20pt}
\begin{abstract}
 A discrete-time quantum walk is the quantum analogue of a Markov chain on a graph. Zhan [{\em 	J. Algebraic Combin.} 53(4):1187--1213, 2020] proposes a model of discrete-time quantum walk whose transition matrix is given by two reflections, using the face and vertex incidence relations of a graph embedded in an orientable surface. We show that the evolution of a  general discrete-time quantum walk that consists of two reflections satisfies a Chebyshev recurrence, under a projection. For the vertex-face walk, we prove theorems about perfect state transfer and periodicity and give infinite families of examples where these occur. 
 We bring together tools from algebraic and topological graph theory to analyze the evolution of this walk. 

    \vspace{5pt}
  \noindent\textit{Keywords: quantum walk, graph embeddings, graph eigenvalues} 
 
  \noindent\textit{Mathematics Subject Classifications 2020: 05C50, 05C10, 81P45  } 
\end{abstract}

\vspace{5pt}
\centerline{
\begin{minipage}{0.7\linewidth}
\small
\tableofcontents
\end{minipage}}

\section{Introduction}
Quantum computing gives rise to many interesting applications of combinatorics; in this paper, we bring together ideas from algebraic and topological graph theory to study properties, including state transfer and periodicity, of a model of quantum walk which takes place on an embedded graph. 
Like their continuous-time counterpart, discrete-time quantum walks are computational primitives; 
\cite{LovCooEve2010} show that the discrete-time quantum walk is able to implement the same universal gate set and thus any quantum algorithm can be viewed as a discrete-time quantum walk. See
\cite{Por2013} for connections between quantum walks and quantum search. 
In recent papers and an upcoming book, Godsil and Zhan \cite{GodZha2019, Zha2020} describe the various models of discrete-time quantum walks and apply techniques from algebraic graph theory to study properties of the evolution of these walks.
In this paper, we prove a result about a general model of discrete walks, where the transition matrix consists of two reflections, and results about perfect state transfer and periodicity in one specific model, the vertex-face model. 

The quantum walks studied here are discrete-time and they are built from two reflections; the transition matrix $U$ is of the form
\[
U = (2P - I)(2Q- I),
\]
where $P,Q$ are the orthogonal projectors onto two subspaces, as defined  in \cite{MagNayRol2011}. These walks are referred to as  \textsl{bipartite walks} in \cite{CheGodSub2022} and are a general model of quantum walk which encompasses the walks defined by  Szegedy in his seminal paper \cite{Sze2004} and also includes the \textsl{vertex-face walks} which are the focus of this paper. 
For background on the role of discrete-time quantum walks in quantum algorithms, we refer to \cite{San2008, ApeGilJef2019}. We will defer the definition of the vertex-face walks until Section \ref{sec:vxfacewalkdefn}; intuitively, the walk evolves on a graph embedded in an orientable surface and the transition matrix has the property that $P$ and $Q$ are the projections onto vector spaces determined by incidence relations of faces and of vertices,  respectively. The vertex-face model was first defined in \cite{Zha2020}, motivated by spatial quantum search in  \cite{PatRagRun2005,Falk2013,AmbPorNah2015}, where the quantum walk used corresponds, in some way, to the vertex-face walk on the toroidal grids.


\begin{figure}[ht]
\centering
\begin{subfigure}{0.21\textwidth}
    \centering
    \includegraphics[width=\textwidth]{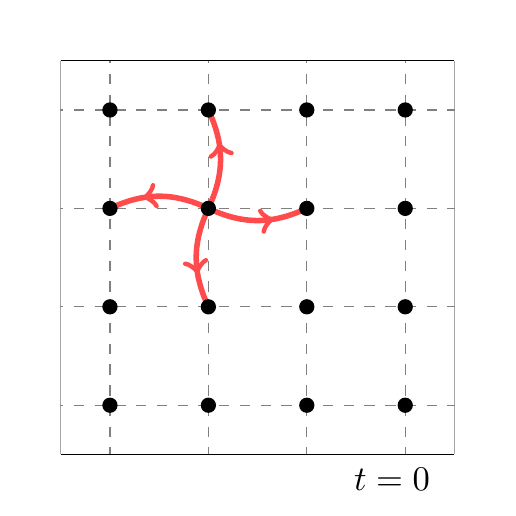}
\end{subfigure}
\kern-1em
\begin{subfigure}{0.21\textwidth}
    \centering
    \includegraphics[width=\textwidth]{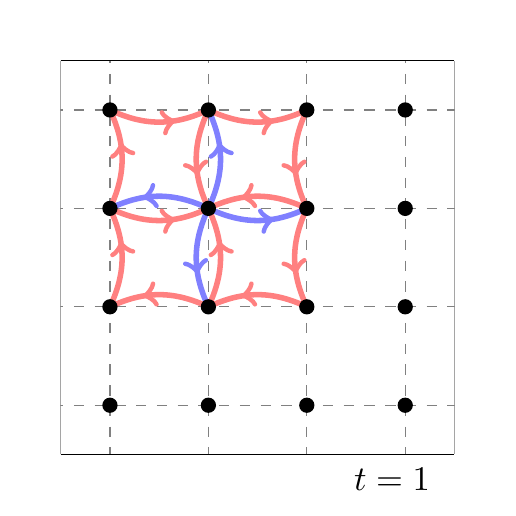}
\end{subfigure}
\kern-1em
\begin{subfigure}{0.21\textwidth}
    \centering
    \includegraphics[width=\textwidth]{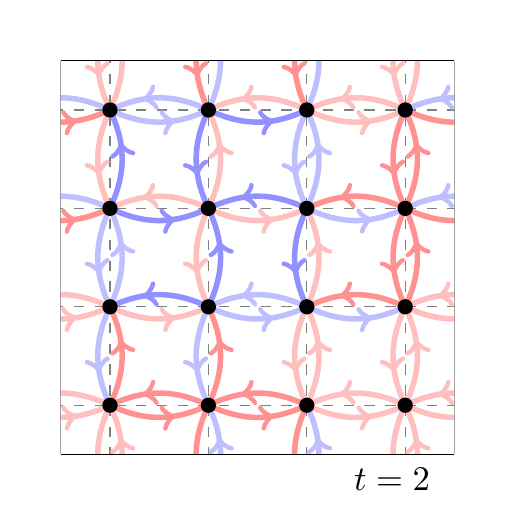}
\end{subfigure}
\kern-1em
\begin{subfigure}{0.21\textwidth}
    \centering
    \includegraphics[width=\textwidth]{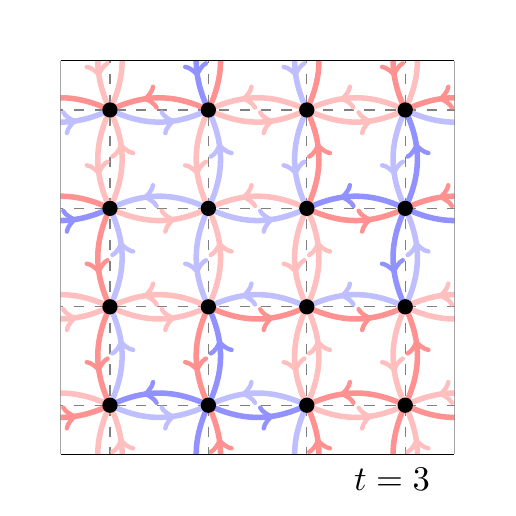}
\end{subfigure}
\kern-1em
\begin{subfigure}{0.21\textwidth}
    \centering
    \includegraphics[width=\textwidth]{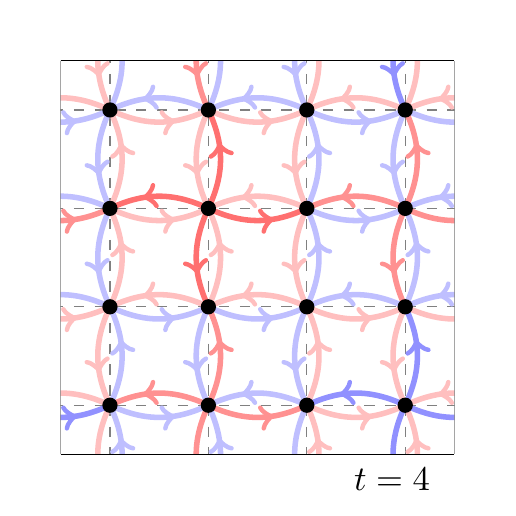}
\end{subfigure}

\vspace{-4pt}

\begin{subfigure}{0.21\textwidth}
    \centering
    \includegraphics[width=\textwidth]{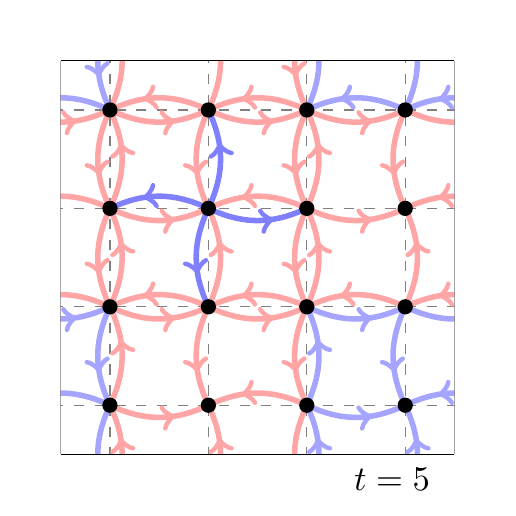}
\end{subfigure}
\kern-1em
\begin{subfigure}{0.21\textwidth}
    \centering
    \includegraphics[width=\textwidth]{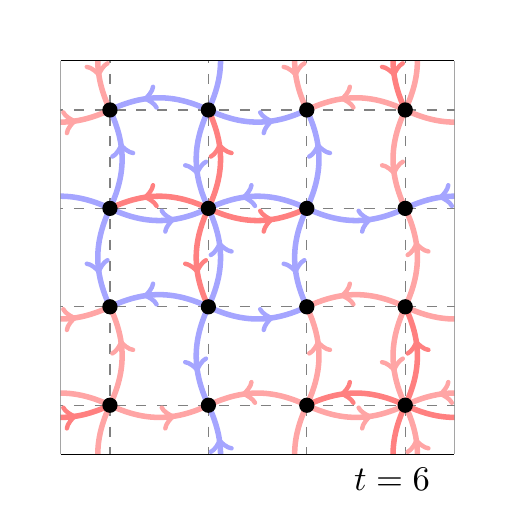}
\end{subfigure}
\kern-1em
\begin{subfigure}{0.21\textwidth}
    \centering
    \includegraphics[width=\textwidth]{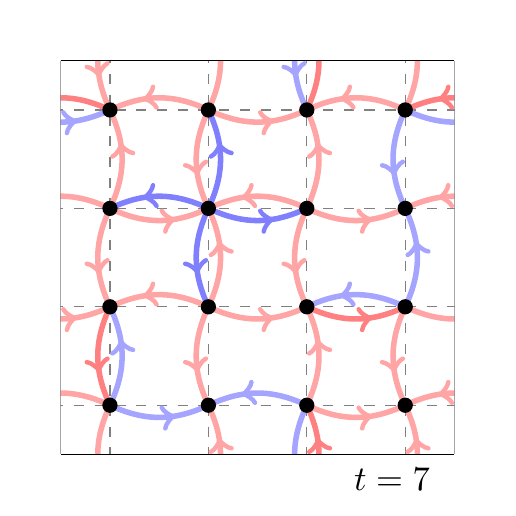}
\end{subfigure}
\kern-1em
\begin{subfigure}{0.21\textwidth}
    \centering
    \includegraphics[width=\textwidth]{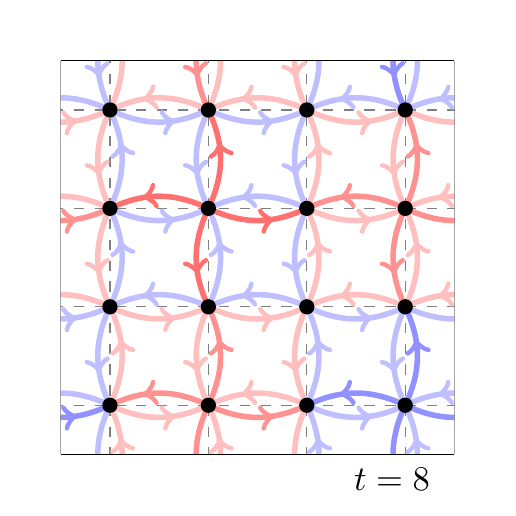}
\end{subfigure}
\kern-1em
\begin{subfigure}{0.21\textwidth}
    \centering
    \includegraphics[width=\textwidth]{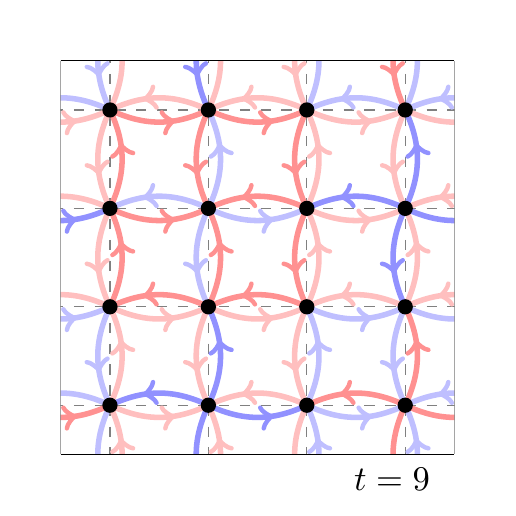}
\end{subfigure}

\vspace{-4pt}

\begin{subfigure}{0.21\textwidth}
    \centering
    \includegraphics[width=\textwidth]{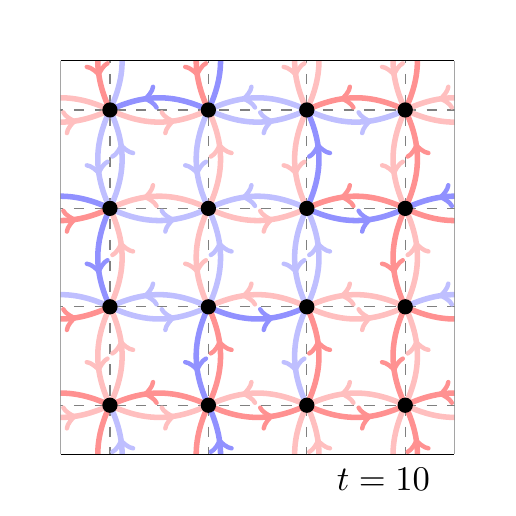}
\end{subfigure}
\kern-1em
\begin{subfigure}{0.21\textwidth}
    \centering
    \includegraphics[width=\textwidth]{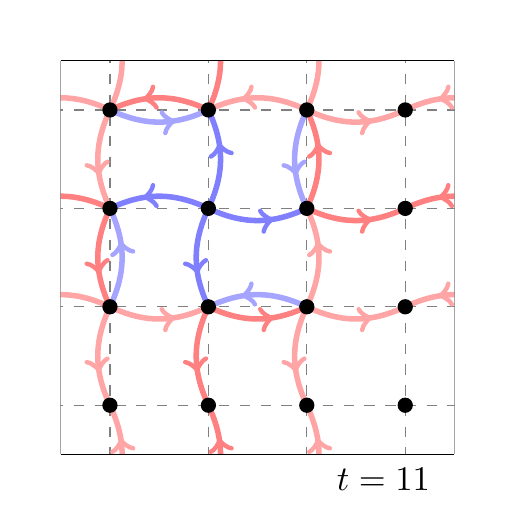}
\end{subfigure}
\kern-1em
\begin{subfigure}{0.21\textwidth}
    \centering
    \includegraphics[width=\textwidth]{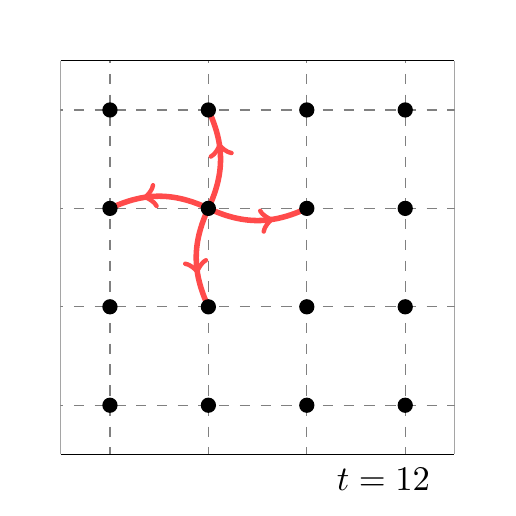}
\end{subfigure}
\kern-1em
\begin{subfigure}{0.21\textwidth}
    \centering
    \includegraphics[width=\textwidth]{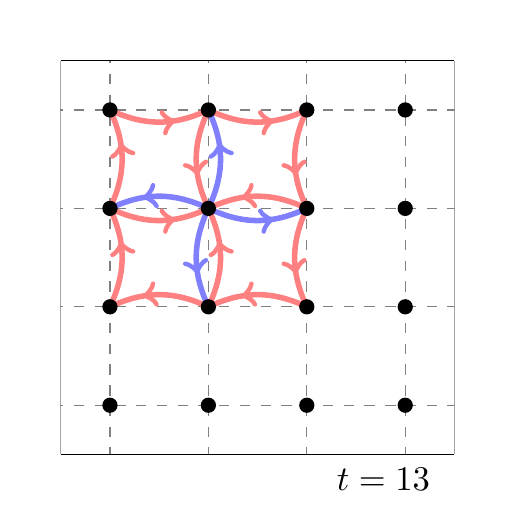}
\end{subfigure}
\kern-1em
\begin{subfigure}{0.21\textwidth}
    \centering
    \includegraphics[width=\textwidth]{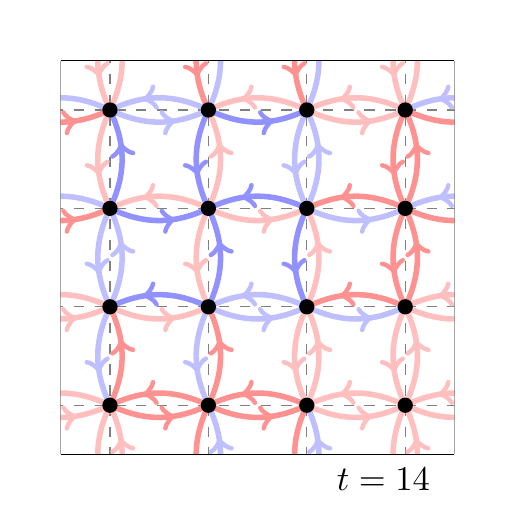}
\end{subfigure}

\vspace{-4pt}
\caption{The vertex-face walk on the toroidal $(4,4)$-grid. Each diagram consists of the $4 \times 4$ grid embedded in the torus, shown here as a cut-open torus, where the opposite sides are identified; for visual simplicity, we have omitted the labels on the boundary of the torus.  The state space of the walk is the space of arcs. Here we represent the amplitude of each arc by using opacity for magnitude and colours (red, blue) for the sign (positive, negative, resp.).\label{fig:4_4grid}}
\end{figure}

Figure \ref{fig:4_4grid} shows the evolution of the vertex-face walk on the $4 \times 4$ toroidal grid. We will now give an intuitive was\vincentsays{?} the quantum walk properties studied herein, and defer rigourous definitions until Section \ref{sec:pst}. In the Figure, we see that the state at times $t=0$ and $t=12$ are identical; this is called \textsl{periodicity}. If the state had moved to the same distribution, but at another vertex, it would be an example of \textsl{perfect state transfer}. 
Though state transfer and periodicity has been studied in continuous-time quantum walks in a combinatorial setting, see \cite{GodKriSev2012, CoutinhoGodsilGuoVanhove2, GodGuoKem2020} for examples,  it is a relatively unexplored topic for bipartite walks and for the vertex-face walks in particular, though some recent papers have appeared; for example, pretty good  state transfer in discrete quantum walks has been studied in \cite{ChaZha2021}. 
The relationship between continuous and discrete quantum walks is explicated in \cite{Chi2010}. 
We make our own contribution by establishing some fundamental properties of state transfer in the vertex-face model of discrete-time quantum walks, with some analoguous theorems to those for continuous quantum walks. 

The main results of this paper are as follows. 
First we consider, the general model of discrete-time quantum walks and we give a surprising Chebyshev recurrence for its evolution with respect to one of the reflections, in Theorem \ref{thm:B_Chebyshev}. 
Applying this recurrence to our the vertex-face walk, we establish fundamental properties of perfect state transfer in Theorem \ref{thm:pst_reverse_periodicity}. 
We show that, loosely speaking, if the map admits perfect state transfer everywhere,  then it also admits periodicity and has the property that there is some $\tau > 0$ such that $U^{\tau} = I$. 
We then characterise maps for which $U^{\tau} = I$ fully for $\tau=1,2$, and give partial results for larger $\tau$. We give new examples of perfect state transfer in infinite families of maps (dipoles and grids). 

The organization of this paper is as follows.
Since the vertex-face walk takes place on a cellularly embedded graph and is defined with incidence matrices which are not standard in the literature, we give the necessary preliminaries on graph embeddings in Section \ref{sec:preliminaries}.
In Section \ref{sec:vxfacewalkdefn}, we give the formal definition of the vertex-face quantum walk. 
In Section \ref{sec:pst}, we prove a general result about a Chebyshev recurrence for discrete-time quantum walk, and apply it in the specific model, the vertex-face walk, to prove our main results on perfect state transfer.
We move to more symmetric graphs in Section \ref{sec:periodicity} and establish the connection between perfect state transfer, periodicity and maps where some power of the transition matrix equals the identity. 
In Section \ref{sec:powerofUisI}, we work towards characterisation for those maps where some power of the transition matrix is the identity matrix. 
We give three infinite families of examples of perfect state transfer in Section \ref{sec:infexamples}.  Since the vertex-face walk is a relatively new concept and not many examples are well-understood, we performed computations pertaining to our main results on the census of regular maps, as given by Conder in \cite{Con2009}, in Section \ref{sec:computations} to gain intuition on these walks. 
Finally, we conclude with open problems in Section \ref{sec:conclusion}. 

\section{Preliminaries}
\label{sec:preliminaries}

Before we can give the formal definition of a vertex-face walk, we have to refresh our definitions and notation for graph embeddings. The vertex-face walk is defined for a graph embedded in an orientable surface, using the incidence relations between its vertices, faces and edges. 
We consider graphs with loops and parallel edges; we will use `graph' and `multigraph' interchangeably. 
At the end of the section, we turn our attention to automorphisms of maps and define (orientably-)regular maps, which provide a broad class of examples that can be searched computationally in order to gain intuition on vertex-face walks. 

We consider cellular embeddings of graphs on orientable surfaces. A \textsl{map} is a $2$-cell embedding of a connected graph into a closed surface with no boundary. A map is completely determined by its facial boundary walks. A map is \textsl{orientable} if the underlying surface is orientable. In this paper, we will exclusively consider orientable maps and will often write ``map'' for ``orientable map'' for convenience. The number of handles is the called the \textsl{genus} of the surface. The \textsl{genus} $g$ of an orientable map is equal to the genus of its underlying surface and satisfies \textsl{Euler's formula}:
\[
|V| - |E| + |F| = 2 - 2g,
\]
where $V$, $E$ and $F$ are respectively the sets of vertices, edges and faces of the map.
For background on maps, surfaces and topological graph theory, we refer the reader to \cite{GroTuc1987} and \cite{MohTho2001}.

On an orientable surface, we can make a consistent distinction between a `clockwise' and a `anticlockwise' orientation. For each vertex of an orientable map, we can give a cyclic ordering of the edges and faces incident to that vertex, using the clockwise order in which these edges and faces are attached to that vertex. For example, for $v_1$ of the embedded digon in Figure \ref{fig:digon&dual}, it is $(e_1,f_1,e_2,f_2)$. The subsequence of edges is said to be the \textsl{rotation} of the vertex and the set of all rotations form the \textsl{rotation system} of the map. Every orientable map is, up to homeomorphism, uniquely defined by its rotation system. With the clockwise orientation, the edges incident to a face can be ordered similarly; the \textsl{facial walk} is the alternating sequence of incident vertices and edges in the clockwise order in which they appear on the boundary of that face. For the  example shown in Figure \ref{fig:digon&dual}, the facial walk of $f_1$ of $X_2$ is given by $(v_1, e_1,v_2,e_2)$.

The \textsl{dual} $X^*$ of $X$ is the map whose vertex set is the set of faces of $X$, whose edge set is equal to that of $X$, where the rotational system is given by the facial boundary walks of $X$. Note that the dual is also a 2-cell embedding in the same surface. Figure \ref{fig:digon&dual} depicts the digon (left) and its dual (right) embedded in the sphere (genus $0$). We will denote this map by $X_2$. We have $V = \{v_1,v_2\}$, $E = \{e_1,e_2\}$ and $F = \{f_1,f_2\}$. Note that $X_2$ is \textsl{self-dual}: there exist bijections $V \to F$ and $E \to E$ that preserve the incidence structure of the map. (In particular, the graphs underlying the map and its dual are isomorphic.)

\begin{figure}[htbp]
    \centering
    \begin{tikzpicture}
    	\begin{pgfonlayer}{nodelayer}
    		\node [label=below:$v_1$] (0) at (-1.5, 0) {};
    		\node [label=right:$v_2$] (1) at (1.5, 0) {};
    		\node [label=left:$f_1$] (2) at (0, 0) {};
		    \node [label=below:$f_2$] (3) at (3, 0) {};
		    \node [label=center:$X_2$] at (-1,-1) {};
    	\end{pgfonlayer}
    	\begin{pgfonlayer}{edgelayer}
    		\draw [thick, color=black, bend left=50] (0.center) to (1.center);
    		\draw [thick, color=black, bend left=50] (1.center) to (0.center);
    		\draw [dotted, color=gray, bend left=50] (2.center) to node [pos=0.275, above, text=black] {$e_1$} (3.center);
    		\draw [dotted, color=gray, bend left=50] (3.center) to node [pos=0.725, below, text=black] {$e_2$} (2.center);
    	\end{pgfonlayer}
    	
       \filldraw [black]
    	(0) circle (2pt)
      	(1) circle (2pt);
       \filldraw [gray]
        (2) circle (1pt)
        (3) circle (1pt);
    \end{tikzpicture}
    \quad\quad\quad
    \begin{tikzpicture}
    	\begin{pgfonlayer}{nodelayer}
    		\node [label=below:$v_1$] (0) at (-1.5, 0) {};
    		\node [label=right:$v_2$] (1) at (1.5, 0) {};
    		\node [label=left:$f_1$] (2) at (0, 0) {};
		    \node [label=below:$f_2$] (3) at (3, 0) {};
		    \node [label=center:$X_2^*$] at (2.5,-1) {};
    	\end{pgfonlayer}
    	\begin{pgfonlayer}{edgelayer}
    		\draw [dotted, color=gray, bend left=50] (0.center) to (1.center);
    		\draw [dotted, color=gray, bend left=50] (1.center) to (0.center);
    		\draw [thick, color=black, bend left=50] (2.center) to node [pos=0.275, above, text=black] {$e_1$} (3.center);
    		\draw [thick, color=black, bend left=50] (3.center) to node [pos=0.725, below, text=black] {$e_2$} (2.center);
    	\end{pgfonlayer}
    	
       \filldraw [gray]
    	(0) circle (1pt)
      	(1) circle (1pt);
       \filldraw [black]
        (2) circle (2pt)
        (3) circle (2pt);
    \end{tikzpicture}
    \caption{$X_2$ and its dual $X_2^*$ embedded in the sphere.\label{fig:digon&dual}}
\end{figure}
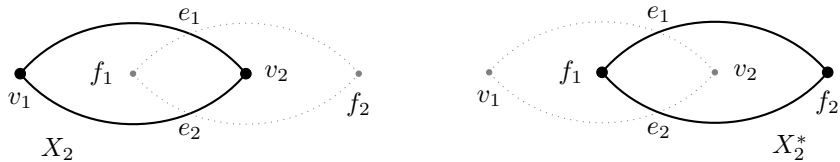

The \textsl{(vertex-)degree} of a vertex is the number of edges in its rotation. Note that each loop contributes $2$ to the degree of the vertex that it is attached to. Likewise, the \textsl{(face-)degree} of a face is the number of edges in its facial walk. If an edge appears twice in the facial walk (which implies that it is a loop in the dual), then it contributes $2$ to the face-degree. A \textsl{type $(k,d)$} map is a map where every vertex has degree $d$  and every face has degree $k$. The dual of a type $(k,d)$ map is a type $(d,k)$ map. The map $X_2$ is a type $(2,2)$ map.

Let $X$ be a map and assume, for now, that both $X$ and its dual have no loops. A \textsl{flag} of $X$ is defined as a triple $(v,e,f)$ of a vertex $v$, an edge $e$ and a face $f$ of $X$ that are all pairwise incident to each other. The set of all flags is denoted by $\cF$ and we have $|\cF| = 4|E|$, as every edge is incident to four distinct flags. For example, in Figure \ref{fig:4flags}, the green triangle represents the flag $(u,e,f)$. In the example in Figure \ref{fig:digon&dual}, the flags are formed by all possible triples:
\[
\begin{array}{cccc}
(v_1, e_1, f_1), &(v_1, e_2, f_1), &(v_1, e_2, f_2), &(v_1, e_1, f_2), \\
(v_2, e_2, f_1), &(v_2, e_1, f_1), &(v_2, e_1, f_2), &(v_2, e_2,f_2).
\end{array}
\]

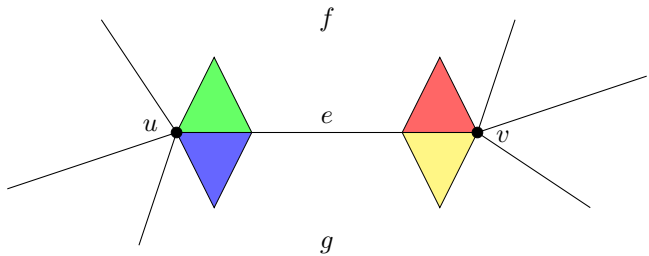
\begin{figure}
    \centering
    \begin{tikzpicture}
    	\begin{pgfonlayer}{nodelayer}
    		\node [label={[shift={(0,0.1)}]left:$u$}] (0) at (-2, 0) {};
    		\node [label={[shift={(0,-0.05)}]right:$v$}] (1) at (2, 0) {};
    		\node (2) at (-3, 1.5) {};
    		\node (3) at (-2.5, -1.5) {};
    		\node (4) at (-4.25, -0.75) {};
    		\node (5) at (2.5, 1.5) {};
    		\node (7) at (4.25, 0.75) {};
    		\node (8) at (3.5, -1) {};
    		\node (11) at (-1, 0) {};
    		\node (14) at (1, 0) {};
    		\node (15) at (-1.5, 1) {};
    		\node (16) at (-1.5, -1) {};
    		\node (17) at (1.5, 1) {};
    		\node (18) at (1.5, -1) {};
    		\node [label=center:$f$] (19) at (0, 1.5) {};
    		\node [label=center:$g$] (20) at (0, -1.5) {};
    	\end{pgfonlayer}
    	\begin{pgfonlayer}{edgelayer}
    		\draw (0.center) to node [above, text=black] {$e$} (1.center);
    		\draw (2.center) to (0.center);
    		\draw (0.center) to (4.center);
    		\draw (0.center) to (3.center);
    		\draw (1.center) to (5.center);
    		\draw (1.center) to (7.center);
    		\draw (1.center) to (8.center);
    	\end{pgfonlayer}
    	\begin{pgfonlayer}{background}
    		\fill[fill=blue!60, draw=black, ultra thin] (0.center) to (16.center) to (11.center) to (0.center);
    		\fill[fill=green!60, draw=black, ultra thin] (0.center) to (15.center) to (11.center) to (0.center);
    		\fill[fill=red!60, draw=black, ultra thin] (1.center) to (17.center) to (14.center) to (1.center);
    		\fill[fill=yellow!60, draw=black, ultra thin] (1.center) to (18.center) to (14.center) to (1.center);
    	\end{pgfonlayer}
       \filldraw [black]
       	(0) circle (2pt)
        (1) circle (2pt);
    \end{tikzpicture}

    \caption{The coloured triangles represent the flags that are incident to the edge $e$. The blue and red flags are the clockwise flags.\label{fig:4flags}}
\end{figure}

Since $X$ is orientable, we can make a distinction between flags that are clockwise and flags that are anticlockwise, by the direction in which the flag `points'. In Figure \ref{fig:4flags}, the blue and red flags are the clockwise flags, and the green and yellow flags are oriented anticlockwise. The clockwise flags in Figure \ref{fig:digon&dual} are
\[
\begin{array}{cccc}
(v_1, e_1, f_1), &(v_1, e_2, f_2), &(v_2, e_2, f_1), &(v_2, e_1, f_2).
\end{array}
\] 

Given an orientable map $X$, we can, for every non-loop edge, add a pair of arcs pointing in opposite directions, and positioned on opposite sides of that edge. As such, each arc lies inside a face of $X$. Because of the orientability of $X$, this can be done in such a way that each arc is pointed in the direction of the facial walk of its corresponding face. We denote the set of arcs by $\cA$. In Figure \ref{fig:digon_arcs}, the arcs of $X_2$ are depicted.

\begin{figure}[H]
    \centering
    \begin{tikzpicture}[scale=1.5]
    	\begin{pgfonlayer}{nodelayer}
    		\node [label=left:$v_1$] (0) at (-1.5, 0) {};
    		\node [label=right:$v_2$] (1) at (1.5, 0) {};
            \node [label={[shift={(0,-.15)}]right: $f_1$}] (2) at (0, 0) {};
		    \node [label=below left:$f_2$] (3) at (3, 0) {};
    	\end{pgfonlayer}
    	\begin{pgfonlayer}{edgelayer}
    		\draw [style=directed, thick, color=black, bend left=35] (0.center) to node [pos = 0.6, below] {$a_2$} (1.center);
    		\draw [style=directed, thick, color=black, bend left=35] (1.center) to node [pos = 0.6, above] {$a_3$} (0.center);
    		\draw [style=directed, thick, color=black, bend right=70] (1.center) to node [pos = 0.6, above] {$a_1$} (0.center);
    		\draw [style=directed, thick, color=black, bend right=70] (0.center) to node [pos = 0.6, below] {$a_4$} (1.center);
    		\draw [thin, dashed, color=gray, bend left=52.2] (0.center) to node [below, shift={(0,.075)}] {$e_1$} (1.center);
    		\draw [thin, dashed, color=gray, bend left=52.2] (1.center) to node [above, shift={(0,-.1)}] {$e_2$} (0.center);
    	\end{pgfonlayer}
        \filldraw [black]
            (0) circle (2pt)
            (1) circle (2pt);
    \end{tikzpicture}
    \caption{The arcs of $X_2$.\label{fig:digon_arcs}}
\end{figure}
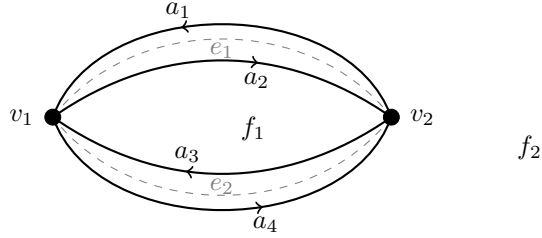

For our initial definition of a flag, we assumed that both $X$ and its dual have no loops. In that case, there is a clear 1-1 correspondence between the arcs of $X$ and its clockwise flags. For example, in Figure \ref{fig:digon_arcs}, the arc $a_1$ corresponds to the flag $(v_2,e_1,f_2)$: the tail of $a_1$ is $v_2$, and the arc lies inside $f_2$, alongside the edge $e_1$. If $X$ or its dual $X^*$ has a loop however, we require a more abstract definition which allows for multiple flags to be incident to the same vertex, edge and face. We define the set of flags $\cF$ to be an abstract set with an incidence function $\phi : \cF \to V\times E \times F$, such that every edge $e$ is incident to four unique flags. Intuitively, we would like the four flags shown in Figure \ref{fig:4flags} to be distinct objects. 
Figure \ref{fig:2loop_torus} shows a graph consisting of a single vertex with two loops attached, embedded as an orientable map on the torus. It has a single vertex $v$ and a single face $f$, so $|V \times E \times F| = 2$. The flags $\mathbf{f}_1$, $\mathbf{f}_4$, $\mathbf{f}_5$ and $\mathbf{f}_8$ are incident to the triple $(v,e_1,f)$ and the flags $\mathbf{f}_2$, $\mathbf{f}_3$, $\mathbf{f}_5$ and $\mathbf{f}_6$ are incident to $(v,e_2,f)$. The clockwise flags are $\mathbf{f}_1$, $\mathbf{f}_3$, $\mathbf{f}_5$ and $\mathbf{f}_7$, and they correspond to the arcs of the map.

\begin{figure}[H]
    \centering
    \begin{tikzpicture}[scale=1.3]
    	\begin{pgfonlayer}{nodelayer}
    		\node (1) at (-3, -2) {};
    		\node (3) at (3, 2) {};
    		\node (4) at (3, -2) {};
    		\node (5) at (-3, 2) {};
    		\node (6) at (0, 2) {};
    		\node (7) at (0, -2) {};
    		\node (8) at (-3, 0) {};
    		\node (9) at (3, 0) {};
    		\node [label={[shift={(-.11,-.28)}]center:{$v$}}] (10) at (0, 0) {};
    		\node (11) at (-.75,-.75) {};
    		\node (12) at (-.75,.75) {};
    		\node (13) at (.75,-.75) {};
    		\node (14) at (.75,.75) {};

    		\node [label={[shift={(.05,-.05)}]center:{$\mathbf{f}_1$}}] (a1) at (.1875,.5625) {};
    		\node [label={[shift={(-.025,0)}]center:{$\mathbf{f}_2$}}] (a2) at (.5625,.1875) {};
    		\node [label={[shift={(-.025,.-.025)}]center:{$\mathbf{f}_3$}}] (a3) at (.5625,.-.1875) {};
    		\node [label={[shift={(.05,0)}]center:{$\mathbf{f}_4$}}] (a4) at (.1875,-.5625) {};
    		\node [label={[shift={(-.025,0)}]center:{$\mathbf{f}_5$}}] (a5) at (-.1875,-.5625) {};
    		\node [label={[shift={(.05,-.025)}]center:{$\mathbf{f}_6$}}] (a6) at (-.5625,-.1875) {};
    		\node [label={[shift={(.05,0)}]center:{$\mathbf{f}_7$}}] (a7) at (-.5625,.1875) {};
    		\node [label={[shift={(-.05,-.05)}]center:{$\mathbf{f}_8$}}] (a8) at (-.1875,.5625) {};
    		
    		\node [label=center:$f$] (f1) at (-1.5,1) {};
    		\node [label=center:$f$] (f2) at (1.5,1) {};
    		\node [label=center:$f$] (f3) at (-1.5,-1) {};
    		\node [label=center:$f$] (f4) at (1.5,-1) {};
    	\end{pgfonlayer}
    	\begin{pgfonlayer}{background}
    	    \filldraw [draw=black, fill=lightgray]
    	        (11.center) to (12.center) to (14.center) to (13.center) to (11.center);
    	    \draw (11.center) to (14.center);
    	    \draw (12.center) to (13.center);
    	\end{pgfonlayer}
    	\begin{pgfonlayer}{edgelayer}
    		\draw (10.center) to node [pos=.66, right] {$e_1$} (6.center);
    		\draw (10.center) to node [pos=.66, below] {$e_2$} (9.center);
    		\draw (10.center) to node [pos=.66, left] {$e_1$} (7.center);
    		\draw (10.center) to node [pos=.66, above] {$e_2$} (8.center);
    		\draw [thin, style=identify2](1.center) to (5.center);
    		\draw [thin, style=identify2](4.center) to (3.center);
    		\draw [thin, style=identify1] (5.center) to (3.center);
    		\draw [thin, style=identify1] (1.center) to (4.center);
    	\end{pgfonlayer}
        \filldraw [black]
            (10) circle (2pt);
    \end{tikzpicture}
    \caption{One vertex with two loops embedded in the torus, shown as a cut-open torus. It has $8$ flags.\label{fig:2loop_torus}}
\end{figure}
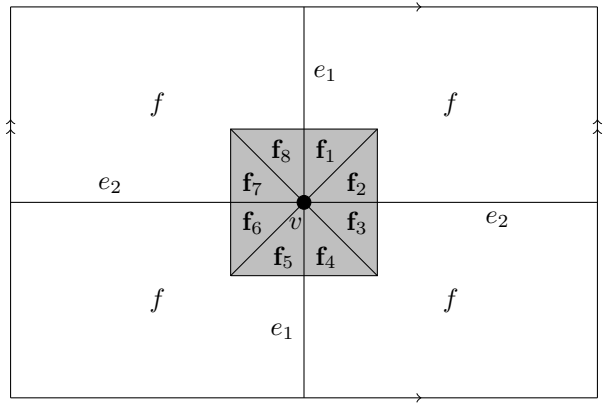

Given an orientable map $X$, we need to define several incidence matrices; the state space for the quantum  walk that we study is the set of arcs of the graph. For an arc $a \in \cA$, let $v(a)$ be the tail vertex of $a$, let $f(a)$ be the face in which $a$ lies, and let $e(a)$ be the edge along which $a$ lies. (Alternatively, $(v(a),e(a),f(a))$ is the incident triple for the corresponding clockwise flag.) We define the \textsl{arc-vertex incidence matrix} $N \in \{0,1\}^{\cA \times V}$, the \textsl{arc-face incidence matrix} $M \in \{0,1\}^{\cA \times F}$ and the \textsl{arc-edge incidence matrix} $L \in \{0,1\}^{\cA \times E}$ as follows:
\[
N(a,v) = \begin{cases}
1 \quad &\text{if $v = v(a)$;} \\
0 \quad &\text{otherwise,}
\end{cases} \quad 
M(a,f) = \begin{cases}
1 \quad &\text{if $f = f(a)$;} \\
0 \quad &\text{otherwise,}
\end{cases}
\quad \text{and} \quad 
L(a,e) = \begin{cases}
1 \quad &\text{if $e = e(a)$;} \\
0 \quad &\text{otherwise.}
\end{cases}
\]

The incidence matrices for $X^*$ (with the same set of arcs $\cA$) are obtained by reversing the roles of $N$ and $M$. Note that these are different incidence matrices from those often considered in the literature; these incidence matrices capture incidence relations on arcs, instead of incidence relations on edges. For the map $X_2$, the matrices can be written down explicitly as follows:
\[
N = \kern-.4em
\begin{array}{cc}
  & \begin{array}{cc}v_1 \!\!& \!\! v_2 \end{array} \\
\begin{array}{c} a_1 \kern-1.5em \\ a_2  \kern-1.5em\\ a_3 \kern-1.5em \\ a_4 \kern-1.5em \end{array} & \left[\begin{array}{cc}0 & 1 \\ 1 & 0 \\ 0 & 1 \\ 1 & 0\end{array}\right]
\end{array}, \quad\quad
M = \kern-.4em
\begin{array}{cc}
  & \begin{array}{cc}f_1 \!\!& \!\! f_2 \end{array} \\
\begin{array}{c} a_1 \kern-1.5em \\ a_2  \kern-1.5em\\ a_3 \kern-1.5em \\ a_4 \kern-1.5em \end{array} & \left[\begin{array}{cc}0 & 1 \\ 1 & 0 \\ 1 & 0 \\ 0 & 1\end{array}\right]
\end{array},
\quad \text{and} \quad
L = \kern-.4em
\begin{array}{cc}
  & \begin{array}{cc}e_1 \!\!& \!\! e_2 \end{array} \\
\begin{array}{c} a_1 \kern-1.5em \\ a_2  \kern-1.5em\\ a_3 \kern-1.5em \\ a_4 \kern-1.5em \end{array} & \left[\begin{array}{cc}1 & 0 \\ 1 & 0 \\ 0 & 1 \\ 0 & 1\end{array}\right]
\end{array}.
\]

For any arc $a \in A$, we denote by $\revarc{a}$ for the unique other arc incident to the edge $e(a)$; that is, $\revarc{a}$ is the arc going in the opposite direction from $a$. For instance, in Figure \ref{fig:digon_arcs}, $\revarc{a_1} = a_2$. The \textsl{arc-reversal matrix} $R \in \{0,1\}^{\cA \times \cA}$ is the permutation matrix that switches each such pair of arcs: it is defined by
\[
R(a,b) = \begin{cases}
1 \quad &\text{if $\revarc{a} = b$}; \\
0 &\text{otherwise.}
\end{cases}
\]
Alternatively, we can write $R = LL^T - I$. In the case of our example $X_2$, the matrix $R$ is given by
\[
R = I \otimes \bmat{0 & 1 \\ 1 & 0}.
\]

Let $D \in \cx^{V \times V}$ be the diagonal matrix for which the diagonal $(v,v)$-entry is equal to the degree of the vertex $v$. Similarly, let $\Delta \in \cx^{F \times F}$ be the diagonal matrix for which the diagonal $(f,f)$-entry is equal to the degree of the vertex $f$. We have the following lemma:

\begin{proposition}
\label{prop:NLM_props}
The following properties hold:
\begin{enumerate}[(i)]
\item $N^TN = D$, $L^TL = 2I$ and $M^TM = \Delta$;
\item $N^TRN = A(X)$ and $M^TRM = A(X^*)$;
\item $N^TRM = N^TM$.
\end{enumerate}
Here, $A(X)$ denotes the adjacency matrix of the graph underlying the map $X$.
\end{proposition}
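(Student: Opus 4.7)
The plan is to prove each of (i), (ii), (iii) entrywise: each claimed matrix identity expresses a count of arcs with prescribed incidence data, obtained by expanding the matrix product and using the definitions of $v(\cdot)$, $e(\cdot)$, $f(\cdot)$ and $R$. Because $a \mapsto \revarc{a}$ is a fixed-point-free involution on $\cA$, $R$ is a symmetric permutation matrix, so multiplying by $R$ replaces an arc index by its reverse.

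Part (i) is immediate. The sum $(N^TN)(v,w) = \#\{a : v(a) = v = w\}$ vanishes off the diagonal, and on the diagonal it equals the number of arcs with tail $v$. This equals $\deg(v)$, since each non-loop edge at $v$ contributes one arc-tail and each loop at $v$ contributes two. The same calculation gives $(L^TL)(e,e) = 2$ (two arcs per edge) and $(M^TM)(f,f) = \deg(f)$ (the arcs in face $f$ correspond to the edge-positions of the facial walk of $f$).

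Part (ii) follows by expanding $(N^TRN)(u,v) = \#\{a : v(a) = u,\; v(\revarc{a}) = v\}$, which counts arcs from $u$ to $v$. Each non-loop $uv$-edge contributes one such arc, and each loop at $u$ contributes two arcs with both endpoints $u$, matching the usual convention $A(X)(u,u) = 2\cdot(\text{number of loops at }u)$. The dual identity $M^TRM = A(X^*)$ is the same computation applied to $X^*$, where the arc set and reversal are unchanged but $M$ plays the role of the arc-vertex incidence.

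Part (iii) is the interesting case and is where the main obstacle lies. Using $R^T = R$, rewrite $(N^TRM)(v,f) = \#\{a : v(\revarc{a}) = v,\; f(a) = f\}$—arcs lying in face $f$ whose head is $v$—while $(N^TM)(v,f)$ counts arcs in $f$ whose tail is $v$. The difficulty is that an individual arc need not have both tail and head equal to $v$, so one cannot match them arc-by-arc. The resolution is to use the facial structure of $f$: the arcs lying in $f$ are in canonical bijection with the positions of the cyclic facial walk $(v_1,e_1,v_2,\ldots,v_k,e_k)$ of $f$, and the arc at position $i$ has tail $v_i$ and head $v_{i+1 \bmod k}$. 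Hence the multisets of tails and of heads of arcs in $f$ are cyclic rotations of one another, so both equal the multiplicity of $v$ in the facial walk of $f$, giving $(N^TM)(v,f) = (N^TRM)(v,f)$ entrywise.
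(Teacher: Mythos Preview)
Your proof is correct. Parts (i) and (ii) match the paper's entrywise argument essentially verbatim.

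For part (iii) you take a genuinely different route. The paper expands $(N^TRM)(v,f) = |\{a : v(a) = v,\ f(\revarc{a}) = f\}|$ and constructs a bijection with $\{b : v(b) = v,\ f(b) = f\}$ by sending $a$ to the arc \emph{preceding} it in the rotation at $v$; this is a local argument at the vertex $v$. You instead fix the face $f$ and observe that the arcs lying in $f$ are exactly the steps of its cyclic facial walk, so the multiset of tails and the multiset of heads are cyclic shifts of one another and hence agree. Both approaches yield the same conclusion that each side equals the multiplicity of $v$ on the boundary of $f$; the paper in fact remarks on your orientation-reversal viewpoint as an alternative argument immediately after its proof. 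Your facial-walk argument is slightly more direct in that it identifies the common value of both entries at once, while the paper's rotation bijection makes the local combinatorics at $v$ explicit (which is the picture in Figure~\ref{fig:NTRM=NRM}).
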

\begin{proof} The proofs of parts (i) and (ii) are relatively straightforward. Here, we will only prove that $N^TRN = A(X)$.
We consider the entries of $N^TRN$: let $u,v \in V$, then 
\[
\begin{split}
(N^TRN)(u,v) &= \sum_{a,b\in\cA} N(a,u)R(a,b)N(b,v) \\
&= |\{(a,b) \in \cA^2 : a \neq b, e(a) = e(b), v(a) = u, v(b) = v\}| \\
&= |\{ e \in E: \text{$u$ and $v$ are endpoints of $e$}\}|.
\end{split}
\]
Thus $N^TRN = A(X)$. The remaining identities of (i) and (ii) can be proved in a similar fashion.

For (iii), let $v \in V$ and $f \in F$. We have
\[
\begin{split}
(N^TRM)(v,f) &= \sum_{a,b} N(a,v)R(a,b)M(b,f) \\
&= |\{a \in \cA : v(a) = v, f(\revarc{a}) = f\}| \\
&= |\{b \in \cA : v(b) = v, f(b) = f\}| \\
&= (N^TM)(v,f).
\end{split}
\]
Here, the third equality follows from the following observation: if $a$ is an arc that is incident to $v$ and such that $\revarc{a}$ is incident to $f$, then the arc $b$ that precedes $a$ in the rotation of $v$ is incident to both $v$ and $f$; see Figure \ref{fig:NTRM=NRM}. Hence there is a bijection between the two sets before and after the third equality.
\end{proof}

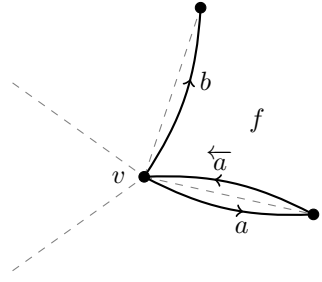
\begin{figure}
    \centering
    \begin{tikzpicture}
    	\begin{pgfonlayer}{nodelayer}
    		\node [label=left:$v$] (0) at (0, 0) {};
    		\node (1) at (0.75, 2.25) {};
    		\node (2) at (-1.75, 1.25) {};
    		\node (3) at (-1.75, -1.25) {};
    		\node (5) at (2.25, -0.5) {};
    		\node [label=center:$f$] (6) at (1.5, .75) {};
    	\end{pgfonlayer}
    	\begin{pgfonlayer}{edgelayer}
    		\draw [gray, dashed] (0.center) to (1.center);
    		\draw [gray, dashed] (0.center) to (5.center);
    		\draw [gray, dashed] (3.center) to (0.center);
    		\draw [gray, dashed] (2.center) to (0.center);
    		\draw [style=directed, thick, color=black, bend right=15] (0.center) to node [pos = 0.6, below] {$a$} (5.center);
    		\draw [style=directed, thick, color=black, bend right=15] (5.center) to node [pos = 0.6, above] {$\revarc{a}$} (0.center);
    		\draw [style=directed, thick, color=black, bend right=15] (0.center) to node [pos = 0.6, right] {$b$} (1.center);
    	\end{pgfonlayer}
    	
    	\filldraw [black]
    	    (0) circle (2pt)
    	    (1) circle (2pt)
    	    (5) circle (2pt);
    \end{tikzpicture}

    \caption{If $v(a) = v$ and $f(\protect\revarc{a}) = f$, then the arc $b$ preceding $a$ in the rotation of $v$ is incident to $f$. \label{fig:NTRM=NRM}}
\end{figure}

The matrix $C := N^TM$ is the \textsl{vertex-face incidence matrix} of $X$; the $(v,f)$-entry of $C$ is equal to the number of times that the vertex $v$ appears on the facial walk of $f$. This insight gives an alternative `proof' for Proposition \ref{prop:NLM_props}(iii): if we had defined the facial walks to be going in the anticlockwise direction, then $RM$ would have been the arc-face incidence matrix of the map, but the entries of $C$ don't depend on the orientation, hence $N^TM = N^TRM$.

An orientable map $X$ has \textsl{incidence multiplicity} $\alpha$ if whenever a vertex appears on the facial walk of a face, it appears on that face exactly $\alpha$ times. Equivalently, $X$ has incidence multiplicity $\alpha$ if all non-zero entries of $C$ are equal to $\alpha$. If $\alpha = 1$ and all facial walks have length at least $3$, then we say that $X$ is \textsl{circular}; in this case, every facial walk is a cycle in the graph underlying $X$. For the example $X_2$ of the embedded digon, we have
\[
C = \kern-.4em
\begin{array}{cc}
  & \begin{array}{cc}f_1 \!\!& \!\! f_2 \end{array} \\
\begin{array}{c} v_1 \kern-1.5em \\ v_2  \kern-1.5em \end{array} & \left[\begin{array}{cc}1 & 1 \\ 1 & 1 \end{array}\right]
\end{array},
\]
and thus $X_2$ is an example of a map with incidence multiplicity $1$, but which is not circular.

By construction, each of the matrices $N$ and $M$ has pairwise orthogonal columns. For the vertex-face quantum walk we need to consider the normalized versions of these incidence matrices; we define 
\[
\Nhat := ND^{-\frac{1}{2}} \quad \text{and} \quad \Mhat := M\Delta^{-\frac{1}{2}}
\]
to be the \textsl{normalized arc-vertex} and \textsl{arc-face incidence matrices}, respectively. The sets of columns of $\Nhat$ and $\Mhat$ form orthonormal bases for the column spaces of $N$ and $M$ respectively. We obtain the following result.

\begin{corollary}\label{cor:inci-properties}
The following properties hold:
\begin{enumerate}[(i)]
\item $\Nhat^T\Nhat = \Mhat^T\Mhat = I$;
\item $\Nhat^TR\Mhat = \Nhat^T\Mhat$.
\end{enumerate}
\end{corollary}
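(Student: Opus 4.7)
The plan is to reduce both statements directly to Proposition \ref{prop:NLM_props} by substituting the definitions $\Nhat = ND^{-1/2}$ and $\Mhat = M\Delta^{-1/2}$. The key observation is that $D$ and $\Delta$ are diagonal (hence symmetric) with positive diagonal entries, so $D^{-1/2}$ and $\Delta^{-1/2}$ are well-defined symmetric matrices that commute with themselves and satisfy $D^{-1/2} D D^{-1/2} = I$ and $\Delta^{-1/2} \Delta \Delta^{-1/2} = I$; here positivity of the diagonal entries follows from the fact that the underlying graph is connected (so every vertex has degree at least $1$ and every face has degree at least $1$).

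For part (i), I would compute $\Nhat^T \Nhat = D^{-1/2} N^T N D^{-1/2}$, then apply Proposition \ref{prop:NLM_props}(i) to replace $N^T N$ by $D$ and conclude that the product is $I$. The argument for $\Mhat^T \Mhat = I$ is identical with $N, D$ replaced by $M, \Delta$.

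For part (ii), I would write
\[
\Nhat^T R \Mhat = D^{-1/2} N^T R M \Delta^{-1/2},
\]
apply Proposition \ref{prop:NLM_props}(iii) to replace $N^T R M$ by $N^T M$, and then recognize the result as $D^{-1/2} N^T M \Delta^{-1/2} = \Nhat^T \Mhat$. There is no real obstacle here; the entire content of the corollary is already carried by the previous proposition, and this statement simply rephrases those identities in a form that is convenient for the spectral analysis of the vertex-face walk in later sections, where orthonormal rather than merely orthogonal column bases are required.
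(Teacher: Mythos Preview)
Your proposal is correct and follows exactly the intended approach: the paper presents this corollary without proof immediately after defining $\Nhat = ND^{-1/2}$ and $\Mhat = M\Delta^{-1/2}$, signalling that it is a direct consequence of Proposition~\ref{prop:NLM_props} obtained by substituting these definitions, precisely as you have done.
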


We also define 
\[
\Chat := \Nhat^T\Mhat = D^{-\frac{1}{2}}C\Delta^{-\frac{1}{2}}
\]
to be the \textsl{normalized vertex-face incidence matrix}.

An \textsl{automorphism} of a map $X$, orientable or non-orientable, is a permutation of the flags of $X$ that preserves all incidences between flags, vertices, edges and faces. We denote the group of automorphisms of $X$ by $\Aut(X)$. Every automorphism is completely determined by the image of any single flag. 
Thus if the action of $\Aut(X)$ on $\cF$ is transitive, it is regular. In that case, we say that $X$ is a \textsl{(fully) regular map}. Specifically, if such a map $X$ is orientable, it is called \textsl{reflexible}. If $X$ is orientable and the action $\Aut(X)$ on the flag has, not one, but two orbits, which are the sets of clockwise and anticlockwise flags, then $X$ is a \textsl{chiral map}. An \textsl{orientably-regular map} is an orientable map that is either reflexible or chiral. Though some definitions vary among the literature, our nomenclature is consistent with the census of regular maps\cite{Con2009, Con2012}, as is used in Section \ref{sec:computations}.

Each automorphism of $X$ induces permutations of the vertices, edges and faces, preserving incidences. We record these in the following proposition, for use in later sections.

\begin{proposition}
\label{prop:regmaps_perms}
Suppose that $\pi$ is an automorphism of $X$ and write $\pi_\cA$, $\pi_V$, $\pi_E$ and $\pi_F$ for the permutation matrices that correspond to the action of $\pi$ on the sets of clockwise flags, vertices, edges and faces of $X$ respectively. Then
\begin{enumerate}[(i)]
    \item $\pi_\cA N = N \pi_V$;
    \item $\pi_\cA L = L \pi_E$; and 
    \item $\pi_\cA M = M \pi_F$. \qed 
\end{enumerate}
\end{proposition}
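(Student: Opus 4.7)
The plan is to verify each of the three identities entrywise, or equivalently by applying both sides to a standard basis vector, using only the fact (stated in the paragraph immediately preceding the proposition) that an automorphism $\pi$ preserves the incidence function $\phi \colon \cF \to V \times E \times F$. Restricted to clockwise flags, i.e.\ to arcs, this yields three equivariance identities: $v(\pi(a)) = \pi(v(a))$, $e(\pi(a)) = \pi(e(a))$, and $f(\pi(a)) = \pi(f(a))$ for every $a \in \cA$. I would fix the standard convention that a permutation $\sigma$ of a finite set $S$ is identified with the matrix satisfying $\sigma\, e_y = e_{\sigma(y)}$ for every $y \in S$.

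For part (i), I would compute
\[
\pi_\cA N e_v \;=\; \pi_\cA \!\!\sum_{a:\, v(a)=v}\!\! e_a \;=\!\! \sum_{a:\, v(a)=v}\!\! e_{\pi(a)} \;=\!\! \sum_{b:\, v(b)=\pi(v)}\!\! e_b \;=\; N e_{\pi(v)} \;=\; N \pi_V e_v,
\]
where the third equality is the reindexing $b = \pi(a)$ combined with the equivariance of $v(\cdot)$. Since this holds on every basis vector $e_v$, we get $\pi_\cA N = N\pi_V$. Parts (ii) and (iii) are established by the same computation, replacing the map $v(\cdot)$ by $e(\cdot)$ or $f(\cdot)$ and $\pi_V$ by $\pi_E$ or $\pi_F$ respectively.

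The only point that warrants care — and what I would flag as the main obstacle — is the case where $X$ or its dual has loops, so that several distinct clockwise flags can share the same triple $(v,e,f)$. What keeps the argument clean is that the paper defines $\cF$ as an abstract set with the map $\phi$, so $v(\cdot)$, $e(\cdot)$ and $f(\cdot)$ remain well-defined \emph{functions} on $\cA$; and an automorphism is by definition a permutation of $\cF$ that commutes with $\phi$, so the three equivariance identities above hold in all cases. Hence the proof is ultimately bookkeeping, and no genuine difficulty arises beyond keeping the conventions for $\pi_\cA$, $\pi_V$, $\pi_E$, $\pi_F$ consistent.
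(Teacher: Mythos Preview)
Your argument is correct. The paper itself gives no proof of this proposition---the statement ends with a \qed---so your entrywise verification via the equivariance of $v(\cdot)$, $e(\cdot)$, $f(\cdot)$ is exactly the routine bookkeeping the authors are signalling as obvious, and your remark about loops (handled by the abstract flag set $\cF$ with its incidence function $\phi$) addresses the only point that could conceivably need care.
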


Clearly, the actions of $\Aut(X)$ on the sets $V$, $E$ and $F$ are transitive if $X$ is a rotary map.

With these preliminaries in mind, we will retain the definitions of $M,N, R$ and $\cA$ for a map $X$ for the rest of the paper, unless specifically stated otherwise.

\section{Vertex-face quantum walk}\label{sec:vxfacewalkdefn}

In this section, we define the vertex-face quantum walk and state the existing results.

Suppose that $X$ is an orientable map, and that $N$ is its arc-vertex incidence matrix and $M$ its arc-face incidence matrix. Let $Q,P \in \cx^{\cA \times \cA}$ be the orthogonal projections onto the column spaces of $N$ and $M$ respectively. Note that we can write
\[
Q = \Nhat\Nhat^T \quad \text{and} \quad P = \Mhat\Mhat^T,
\]
where $\Nhat$ and $\Mhat$ are the respective normalized incidence matrices, because the columns of $\Nhat$ and $\Mhat$ form respective orthonormal bases for $\col(N)$ and $\col(M)$. We speak of the column spaces of $N$ and $\Nhat$ interchangeably, and do the same for $M$ and $\Mhat$. For readability we will often, if possible, use just $N$ and $M$ instead of their normalized versions. For instance, if $X$ is a type $(k,d)$ map, we can write
\[
Q = \frac{1}{d}NN^T \quad \text{and} \quad P = \frac{1}{k}MM^T.
\]
For our example of the embedded digon $X_2$, as defined in the previous section, the matrices $Q$ and $P$ are given as follows:
\[
Q = \frac{1}{2}\bmat{1 & 0 & 1 & 0 \\ 0 & 1 & 0 & 1 \\ 1 & 0 & 1 & 0 \\ 0 & 1 & 0 & 1} \quad \text{and} \quad P =  \frac{1}{2}\bmat{1 & 0 & 0 & 1 \\ 0 & 1 & 1 & 0 \\ 0 & 1 & 1 & 0 \\ 1 & 0 & 0 & 1}.
\]

Let $U \in \cx^{\cA \times \cA}$ be the unitary matrix defined by
\[
U = (2P - I)(2Q - I).
\]
That is, $U$ is the product of the reflections through the column spaces of $M$ and $N$. The \textsl{vertex-face (quantum) walk} on $X$, given an initial state $\ket{\psi} \in \cx^{\cA}$, is given by the sequence
$(U^t \ket{\psi})_{t \in \ints_{\geq 0}}$,
and $U$ the \textsl{transition matrix} of the vertex-face walk on $X$, or `the transition matrix of $X$' for short. Note that all of the matrices involved have real entries. The transition matrix for the dual map is given by
\[
(2Q - I)(2P - I) = U^T,
\]
which is the inverse of $U$, since $U^* = U^T$. We note that we have made an arbitrary decision, following Zhan \cite{Zha2020} to use the clockwise flags; one can derive a more formal correspondence between the use of clockwise and anticlockwise flag using a direct part (ii) of Corollary \ref{cor:inci-properties}. 

For the map $X_2$ whose arcs are shown in Figure \ref{fig:digon_arcs}, we can compute that
\[
U = I \otimes \bmat{0 & 1 \\ 1 & 0}
\]
and thus  $U^2 = I$; in this case the vertex-face walk will alternate between two states. Different characterisations of maps for which the transition matrix $U$ satisfies $U^2 = I$ are given in Lemma \ref{lem:U^2=I}.

To give a more visual example, we consider the Heawood graph embedded on the torus, whose dual is $K_7$. See Figure \ref{fig:heawood}. Let $\ket{\psi}$ be the state consisting of the uniform superposition of the out-going arcs of vertex $6$; that is $\ket{\psi} = \Nhat \Ze_6$. Similarly, let $\ket{\phi} :=\Nhat \Ze_4 $ be the state consisting of the uniform superposition of the out-going arcs of vertex $4$. The probability of measuring at $\ket{\phi}$ at time $t$ with initial state $\ket{\psi} $ is given by $|| \langle \phi | U^t \ket{\psi} ||^2$. The plot of the right side of Figure \ref{fig:heawood} shows this probability for $t =0,\ldots, 49$.

\begin{figure}[htb]
    \centering
    \begin{subfigure}{6cm}
    \includegraphics[width=5.5cm,height=5.5cm]{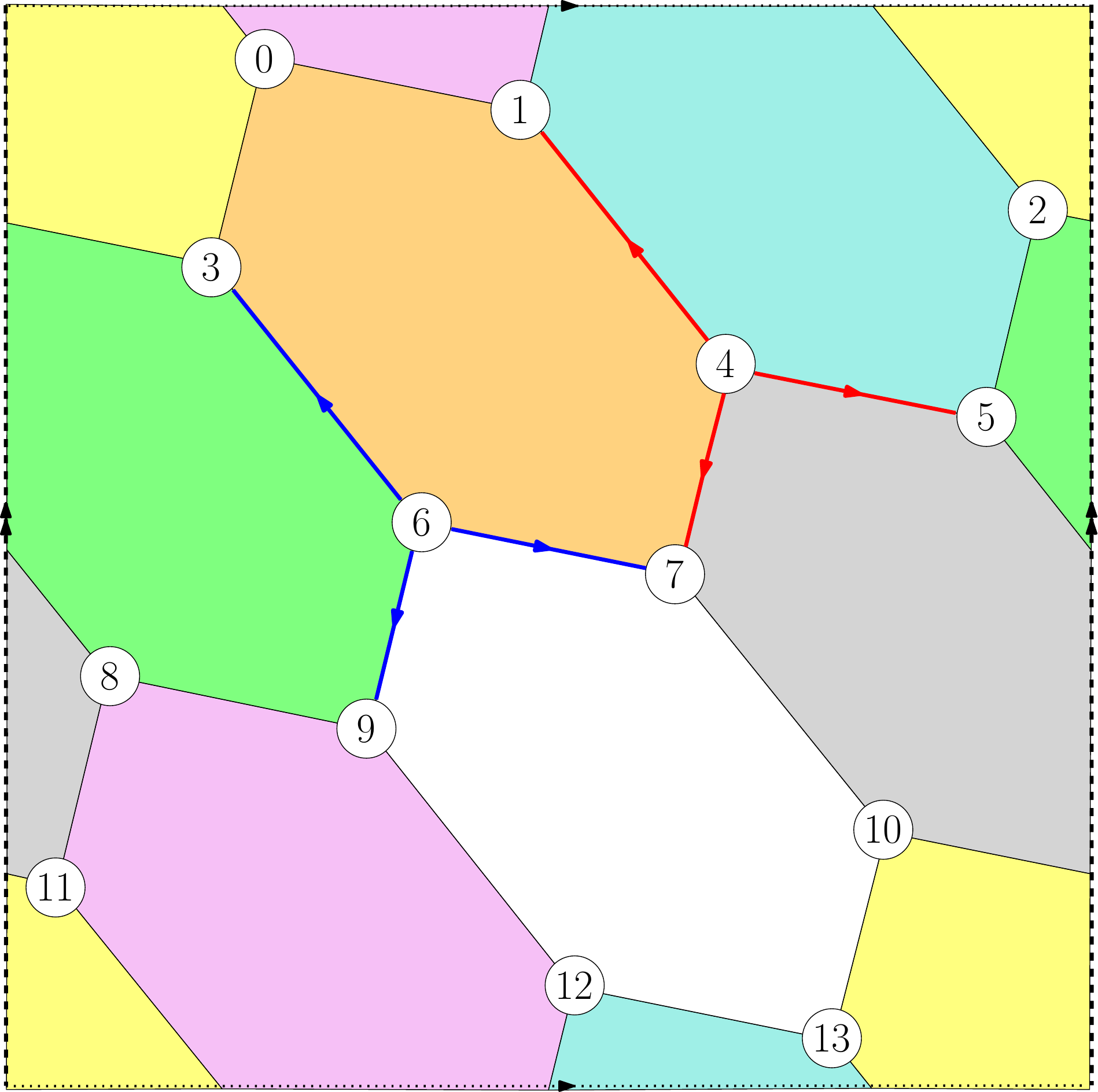}
    \end{subfigure}
    \begin{subfigure}{10cm}
    \includegraphics[width=9.5cm]{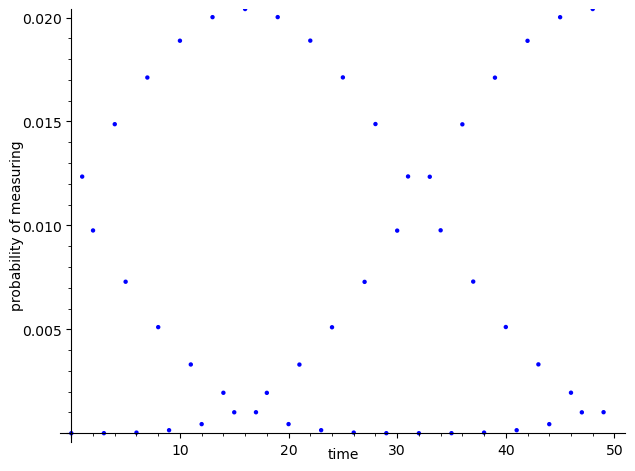}
    \end{subfigure}
    \caption{Heawood graph with the out-arcs at vertices $6$ and $4$ distinguished.  \label{fig:heawood}}   
\end{figure}

The following result about the $1$ and $(-1)$-eigenspaces of $U$ is due to \cite{MagNayRol2011} for general quantum walks and appears as \cite[Theorems 3.1, 3.3]{Zha2020} for vertex-face walks.  Recall that $C = N^TM$. 

\begin{theorem}\cite{Zha2020}
\label{thm:1_eigenspaces}
Let $U$ be the transition matrix for the vertex-face walk of an orientable map $X$.
\begin{enumerate}[(i)]
\item The $1$-eigenspace of $U$ is
\[
(\col(M) \cap \col(N)) \oplus (\ker(M^T) \cap \ker(N^T))
\]
and has dimension $|E| + 2g$. The first subspace in this direct sum is
\[
\col(M) \cap \col(N) = \vecspan\{\ones\}.
\]
\item The $(-1)$-eigenspace of $U$ is
\[
(\col(M) \cap \ker(N^T)) \oplus (\ker(M^T) \cap \col(N))
\]
and has dimension $|V| + |F| - 2\rk(C)$.
\end{enumerate}\qed
\end{theorem}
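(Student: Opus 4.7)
The plan is to reduce the eigenvalue problem for $U = (2P-I)(2Q-I)$ to a simpler condition on the projectors and then decompose each eigenspace using the ranges and kernels of $P$ and $Q$. Since $2P-I$ and $2Q-I$ are involutions, $Uv = v$ is equivalent to $(2Q-I)v = (2P-I)v$, which simplifies to $Pv = Qv$, while $Uv = -v$ is equivalent to $Pv + Qv = v$.

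For the $+1$-eigenspace, set $w := Pv = Qv$. Then $w \in \col(M) \cap \col(N)$ while $v - w = (I-P)v = (I-Q)v$ lies in $\ker(M^T) \cap \ker(N^T)$. Conversely, any $w + u$ with $w$ and $u$ drawn from these two subspaces satisfies $P(w+u) = w = Q(w+u)$, hence is a $+1$-eigenvector; the sum is direct because $\col(M) \cap \ker(M^T) = \{0\}$. For the $-1$-eigenspace, if $Pv + Qv = v$ then $Pv = v - Qv \in \col(M) \cap \ker(N^T)$ and $Qv = v - Pv \in \col(N) \cap \ker(M^T)$, so $v = Pv + Qv$ is the claimed decomposition; the converse and directness are verified analogously.

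The combinatorial core of the argument is the identification $\col(M) \cap \col(N) = \vecspan\{\ones\}$. The inclusion $\supseteq$ follows from $N\ones_V = M\ones_F = \ones_\cA$, since each arc has a unique tail vertex and lies in a unique face. For the reverse inclusion, write $v = Nx = My$ and compare arc-coordinates to obtain $x_{v(a)} = y_{f(a)}$ for every $a \in \cA$. Fixing a face $f$ and varying $a$ along its facial walk keeps $y_{f(a)} = y_f$ constant, so $x$ is constant on all vertices bordering $f$; moving between faces sharing a vertex then propagates this constancy, and by connectedness of $X$ both $x$ and $y$ must be constant. This propagation step is the main obstacle, since it relies on the interplay between the facial and vertex rotations and must be shown to cover the whole map.

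For the dimensions, I would use $\ker(M^T) = \col(M)^\perp$ together with $\rk(N) = |V|$ and $\rk(M) = |F|$ (the columns of $N$ and $M$ being orthogonal) to get $\dim(\ker(M^T) \cap \ker(N^T)) = |\cA| - |V| - |F| + 1$. Adding the one-dimensional span of $\ones$ and applying Euler's formula $|V| - |E| + |F| = 2 - 2g$ with $|\cA| = 2|E|$ yields $|E| + 2g$. For the $-1$-eigenspace, $v \in \col(M) \cap \ker(N^T)$ is equivalent to $v = My$ with $Cy = 0$; since $M$ is injective this piece has dimension $|F| - \rk(C)$, and the symmetric piece has dimension $|V| - \rk(C)$, summing to $|V| + |F| - 2\rk(C)$.
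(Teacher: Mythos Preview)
The paper does not give its own proof of this theorem: it is quoted from \cite{Zha2020} (and ultimately traces back to \cite{MagNayRol2011} for general two-reflection walks) and is marked with a \qed immediately after the statement. There is therefore no in-paper argument to compare against.

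Your proof is correct and is essentially the standard one. The reduction of $Uv=\pm v$ to $Pv=Qv$ (respectively $Pv+Qv=v$) via the involution property of $2P-I$ and $2Q-I$ is exactly right, and the decompositions into range/kernel pieces follow as you say. For the identity $\col(M)\cap\col(N)=\vecspan\{\ones\}$, the step you flag as the main obstacle is not really one: if $u$ and $w$ are adjacent via an arc $a$ with tail $u$, then the facial walk of $f(a)$ contains both $u$ and $w$, so adjacent vertices always share a face; connectedness of the vertex--face incidence structure then follows immediately from connectedness of the underlying graph. The dimension counts are correct: $\ker(M^T)\cap\ker(N^T)=(\col(M)+\col(N))^\perp$ has dimension $|\cA|-|V|-|F|+1$, and Euler's formula with $|\cA|=2|E|$ gives $|E|+2g$; the $(-1)$-eigenspace count via $\ker(C)$ and $\ker(C^T)$, using that $M$ and $N$ have orthogonal (hence independent) columns, is also right.
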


We see that the all-ones vector $\ones$ is always in the $1$-eigenspace of $U$, so the $1$-eigenspace is non-trivial. Note that the expression for the dimension of the $(-1)$-eigenspace is always nonnegative. This space is trivial only if the number of vertices of the map equals the number of faces:

\begin{corollary}
\label{cor:U_-1_not_eig}
If $-1$ is not an eigenvalue of $U$, then $|V| = |F|$.
\end{corollary}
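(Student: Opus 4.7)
The plan is to read off the dimension formula directly from Theorem \ref{thm:1_eigenspaces}(ii) and combine it with the trivial rank bound for a rectangular matrix.

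First I would observe that $-1$ failing to be an eigenvalue of $U$ is equivalent, by Theorem \ref{thm:1_eigenspaces}(ii), to the statement
\[
|V| + |F| - 2\rk(C) = 0,
\]
i.e., $2\rk(C) = |V| + |F|$. Here $C = N^TM$ is a $|V|\times|F|$ matrix, so $\rk(C) \le \min(|V|,|F|)$, giving
\[
2\rk(C) \le 2\min(|V|,|F|) \le |V|+|F|,
\]
with the second inequality being an equality if and only if $|V|=|F|$. Since the outer quantities are equal by hypothesis, both inequalities must be equalities, whence $|V|=|F|$ (and, as a byproduct, $C$ has full rank).

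There is no real obstacle here; the only thing to be careful about is that the quoted formula for the $(-1)$-eigenspace dimension is a genuine dimension (hence nonnegative), which the authors have already noted, so one may safely conclude that vanishing of that dimension is equivalent to $-1$ not being an eigenvalue. The proof is a two-line consequence of Theorem \ref{thm:1_eigenspaces}(ii).
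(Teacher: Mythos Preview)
Your proof is correct and follows essentially the same route as the paper: invoke Theorem~\ref{thm:1_eigenspaces}(ii) to get $|V|+|F|=2\rk(C)$ and then use the rank bound $\rk(C)\le\min(|V|,|F|)$ to force $|V|=|F|$. The only difference is cosmetic---you spell out the inequality chain $2\rk(C)\le 2\min(|V|,|F|)\le |V|+|F|$ explicitly, whereas the paper compresses this to a single sentence.
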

\proof
If $-1$ is not an eigenvalue of $U$, then the dimension of its $(-1)$-`eigenspace' is $0$. By Theorem \ref{thm:1_eigenspaces}(ii), this implies that
\[
|V| + |F| = 2\rk(C).
\]
Since the rank of $C$ is at most $\min(|V|,|F|)$, we must have $|V| = |F|$. \qed

As $U$ is real and unitary, eigenvalues other than $\pm 1$ come in conjugate pairs and lie on the unit circle. (By Theorem \ref{thm:1_eigenspaces}, there are $\rk(C) - 1$ such pairs, with multiplicity). The following results describe how these eigenvalues arise from a smaller matrix whose rows and columns are indexed by the vertex set:
$
\Chat \Chat^T = \Nhat^T P\Nhat$, where $\Chat = \Nhat^t \Mhat$.
As shown in \cite{Zha2020}, the eigenvalues of $U$ can be expressed in terms of the eigenvalues of this matrix. To better facilitate our results, we give a  restatement of their result as a decomposition of the space $\cx^{\cA}$ into root spaces, along with the minimal polynomials of $U$ over each root space, each each of which ahs degree at most $2$. 

\begin{theorem}\cite{Zha2020}
\label{thm:orth_decomposition}
Let $\{\Zv_i\}_{i = 1}^{|V|} \subset \cx^{V}$ be an orthogonal eigenbasis of  $\Chat\Chat^T$, with corresponding eigenvalues $\hat{\lambda}_1, \ldots, \hat{\lambda}_{|V|}$. Then $\cx^\cA$ can be decomposed into a direct sum of orthogonal subspaces as follows:
\[
\cx^\cA = \cK \oplus \cW \oplus \bigoplus_{i : \hat{\lambda}_i \notin \{0,1\}} \cJ_i,
\]
where $\cK$ is the $1$-eigenspace and $\cW$ the $(-1)$-eigenspace of $U$, and where
\[
\cJ_i = \vecspan\{\Nhat \Zv_i, P\Nhat \Zv_i\}, \quad i=1,\ldots,|V|.
\]
If $\hat{\lambda}_i \neq 0,1$, the minimal polynomial of $U$ over $\cJ_{i}$ is given by
\[
p_i(t) = t^2 - (4\hat{\lambda}_i -2)t + 1. \qedhere
\]
\end{theorem}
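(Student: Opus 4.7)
The plan is to work directly from $U = (2P-I)(2Q-I)$, relying on the key identity $\Nhat^T P \Nhat = \Chat\Chat^T$, which follows from $P = \Mhat\Mhat^T$ and $\Chat = \Nhat^T\Mhat$. Since $\Nhat^T\Nhat = I$, we have $Q\Nhat\Zv_i = \Nhat\Zv_i$, and hence $U\Nhat\Zv_i = 2P\Nhat\Zv_i - \Nhat\Zv_i$ lies in $\cJ_i$. For the second generator, $QP\Nhat\Zv_i = \Nhat(\Nhat^T P\Nhat)\Zv_i = \hat{\lambda}_i \Nhat\Zv_i$, and using $P^2 = P$ one computes
\[
U(P\Nhat\Zv_i) = -2\hat{\lambda}_i \Nhat\Zv_i + (4\hat{\lambda}_i - 1)P\Nhat\Zv_i.
\]
Thus $\cJ_i$ is $U$-invariant, and the matrix of $U$ on $\cJ_i$ in the basis $\{\Nhat\Zv_i, P\Nhat\Zv_i\}$ is $\bmat{-1 & -2\hat{\lambda}_i \\ 2 & 4\hat{\lambda}_i - 1}$, with characteristic polynomial $t^2 - (4\hat{\lambda}_i - 2)t + 1$. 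This is the minimal polynomial whenever $\cJ_i$ is genuinely two-dimensional, which follows because $\|P\Nhat\Zv_i\|^2 = \hat{\lambda}_i\|\Zv_i\|^2$ and any proportionality $P\Nhat\Zv_i = c\Nhat\Zv_i$ would, via $P^2 = P$, force $c \in \{0,1\}$ and hence $\hat{\lambda}_i \in \{0,1\}$.

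Pairwise orthogonality among the $\cJ_i$ is immediate from the identities $\langle \Nhat\Zv_i, \Nhat\Zv_j\rangle = \Zv_i^T \Zv_j$ and $\langle \Nhat\Zv_i, P\Nhat\Zv_j\rangle = \langle P\Nhat\Zv_i, P\Nhat\Zv_j\rangle = \hat{\lambda}_j \Zv_i^T \Zv_j$, combined with the orthogonality of $\{\Zv_i\}$. For orthogonality to $\cK \oplus \cW$, I would invoke Theorem \ref{thm:1_eigenspaces} to split this sum into four pieces: the two components lying in $\col(N)$, namely $\col(M) \cap \col(N)$ and $\ker(M^T) \cap \col(N)$, coincide respectively with $\vecspan\{\Nhat\Zv_i : \hat{\lambda}_i = 1\}$ and $\vecspan\{\Nhat\Zv_i : \hat{\lambda}_i = 0\}$, while the two remaining pieces sit inside $\ker(N^T)$ and so are orthogonal to every $\Nhat\Zv_i$; orthogonality to $P\Nhat\Zv_i$ then follows by passing $P$ across the inner product and using $\col(M) \subseteq \col(P)$ versus $\ker(M^T) \subseteq \ker(P)$.

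A dimension count closes the decomposition. Theorem \ref{thm:1_eigenspaces} gives $\dim(\cK \oplus \cW) = |E| + 2g + |V| + |F| - 2\rk(C)$, while $\rk(\Chat\Chat^T) = \rk(C)$ together with $\dim(\col(M) \cap \col(N)) = 1$ forces exactly one eigenvalue $\hat{\lambda}_i$ to equal $1$. A short computation using Euler's formula $|V| - |E| + |F| = 2 - 2g$ then shows that the total dimension of the direct sum is $2|E| = |\cA|$.

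The principal obstacle is the bookkeeping around the boundary values $\hat{\lambda}_i \in \{0,1\}$: one must verify that the eigenvectors at these eigenvalues account \emph{exactly} for the $\col(N)$-components of $\cK \oplus \cW$, so that excluding them from the indexed sum over $\cJ_i$ leaves no part of $\cx^\cA$ uncovered.
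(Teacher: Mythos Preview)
The paper does not prove this theorem: it is stated with a citation to \cite{Zha2020} and no proof is given (the \texttt{\textbackslash qedhere} inside the display is the entire ``proof''). So there is no argument in the paper to compare against.

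That said, your proposal is a correct and complete proof. The computations of $U\Nhat\Zv_i$ and $U(P\Nhat\Zv_i)$ are right, the $2\times 2$ matrix has the claimed characteristic polynomial, and the off-diagonal entry $2$ guarantees it is not scalar, so the minimal polynomial equals the characteristic polynomial on each two-dimensional $\cJ_i$. Your argument that $\cJ_i$ is genuinely two-dimensional when $\hat{\lambda}_i \notin \{0,1\}$ is fine. The orthogonality claims are all correct, and the identification of the $\col(N)$-parts of $\cK\oplus\cW$ with the $\hat{\lambda}_i = 1$ and $\hat{\lambda}_i = 0$ eigenspaces is exactly what is needed (note that the former uses Theorem~\ref{thm:1_eigenspaces}(i), $\col(M)\cap\col(N) = \vecspan\{\ones\}$, together with the observation that $\Nhat^T P\Nhat\Zv = \Zv$ forces $\Nhat\Zv \in \col(M)$, so $\hat{\lambda}_i = 1$ has multiplicity exactly one). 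The dimension count via Euler's formula closes correctly: $(|E|+2g) + (|V|+|F|-2\rk C) + 2(\rk C - 1) = 2|E| = |\cA|$. The ``principal obstacle'' you flag is not actually an obstacle --- your identifications handle it.
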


Note that if $X$ is a type $(k,d)$ map (i.e.\ every vertex has degree $d$ and every face has degree $k$, then $\Chat\Chat^T = CC^T/(kd)$. 
As an example, let $X$ be the $2 \times 3$ grid embedded in the torus. For this map, the eigenvalues of $CC^T$ are as follows:
$\lambda_1 = \lambda_2 = \lambda_3 = 0$,  $\lambda_4 = \lambda_5 = 4$, and $\lambda_6 = 16$.
The eigenvalues of $\Chat\Chat^T$ are then given by $\hat{\lambda}_i = \lambda_i/16$ for all $i$. Then $\lambda_4,\lambda_5 \in (0,1)$, so
\[
\cx^{\cA} = \cK \oplus \cW \oplus \cJ_4 \oplus \cJ_5,
\]
where the $1$-eigenspace $\cK$ of $U$ has dimension $14$ and the $(-1)$-eigenspace $\cW$ has dimension 6. The spaces $\cJ_4$ and $\cJ_5$ are two-dimensional subspaces over which $U$ has minimal polynomial
\[
p_4(t) = p_5(t) = t^2 + t + 1.
\]
Indeed, the dimensions of these subspaces add up to $|\cA| = 24$.

Recall that if a type $(k,d)$ map has incidence multiplicity $\alpha$, then whenever a vertex $v$ is incident to a face $f$, that $v$ is traversed exactly $\alpha$ times by the facial walk of $f$. In the following lemma, we generalize Lemma 2.3 from \cite{Zha2020} from circular embeddings to type $(k,d)$ maps of any incidence multiplicity. 

\begin{lemma}\label{lem:Utrace}
If $X$ is a type $(k,d)$ map with an incidence multiplicity $\alpha$, then every diagonal entry of $U$ is equal to
\[
\frac{4\alpha}{kd}  - \frac{2}{k} - \frac{2}{d} + 1.
\]
Moreover,
\[
\tr(U) = \frac{4\alpha |V|}{k} - 2|F| - 2|V| + |\cA|.
\]
\end{lemma}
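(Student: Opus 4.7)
The plan is to expand $U$ and compute each diagonal contribution separately, exploiting the fact that in a type $(k,d)$ map with incidence multiplicity $\alpha$, each arc sits in a uniquely determined (vertex, face) pair whose incidence value in $C$ is $\alpha$.

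First I would write
\[
U = (2P-I)(2Q-I) = 4PQ - 2P - 2Q + I,
\]
using $P = \tfrac{1}{k}MM^T$ and $Q = \tfrac{1}{d}NN^T$, so that for any arc $a$,
\[
U(a,a) = \tfrac{4}{kd}(MM^TNN^T)(a,a) - \tfrac{2}{k}(MM^T)(a,a) - \tfrac{2}{d}(NN^T)(a,a) + 1.
\]
The diagonal entries of $MM^T$ and $NN^T$ are immediate: since each arc lies in exactly one face and has exactly one tail vertex, $(MM^T)(a,a) = 1$ and $(NN^T)(a,a) = 1$. This accounts for the $-\tfrac{2}{k} - \tfrac{2}{d}$ terms and the $+1$.

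The substantive step is evaluating the diagonal of $MM^TNN^T$. I would observe that $M^TN = (N^TM)^T = C^T$, so $MM^TNN^T = MC^TN^T$, and then
\[
(MC^TN^T)(a,a) = \sum_{f,v} M(a,f)\,C(v,f)\,N(a,v) = C(v(a),f(a)),
\]
because the only nonzero term in the sum comes from $f = f(a)$ and $v = v(a)$. Since $v(a)$ is always incident to $f(a)$ (by construction $a$ lies in face $f(a)$ with tail $v(a)$), and since $X$ has incidence multiplicity $\alpha$, this value is exactly $\alpha$. Combining everything gives the claimed expression for $U(a,a)$. I expect this is the only step that requires any thought; once $M^TN = C^T$ is identified, the constant-diagonal claim follows.

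For the trace formula, I would simply multiply by $|\cA|$ and use the type $(k,d)$ identities $|\cA| = k|F| = d|V|$, which yield $|\cA|/k = |F|$, $|\cA|/d = |V|$, and $|\cA|/(kd) = |V|/k$. Substituting gives
\[
\tr(U) = \tfrac{4\alpha|V|}{k} - 2|F| - 2|V| + |\cA|,
\]
as required. No further structural input from $X$ is needed for the trace step.
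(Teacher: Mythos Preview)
Your proof is correct and follows essentially the same approach as the paper: expand $U = 4PQ - 2P - 2Q + I$, compute each diagonal contribution, and then sum. The only cosmetic difference is that the paper computes $(PQ)_{a,a}$ by directly counting the arcs incident to both $v(a)$ and $f(a)$, whereas you route this through the identity $M^TN = C^T$ to obtain $C(v(a),f(a)) = \alpha$; these are the same fact expressed in two ways.
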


\begin{proof}
We have $Q = \frac{1}{d} NN^T$ and $P = \frac{1}{k}MM^T$ because $X$ has type $(k,d)$. It is not difficult to see that for all $a,b \in \cA$:
\[
Q_{a,b} = \begin{cases}
\frac{1}{d} &\text{if $v(a) = v(b)$}; \\
0 &\text{otherwise,}
\end{cases} \quad \text{and} \quad
P_{a,b} = \begin{cases}
\frac{1}{d} &\text{if $f(a) = f(b)$}; \\
0 &\text{otherwise,}
\end{cases}
\]
where $v(c)$ and $f(c)$ are respectively the vertex and face incident with the arc $c$. This implies that
\[
(PQ)_{a,a} = \sum_{c \in \cA} P_{a,c}Q_{c,a} = \frac{\text{\#arcs incident to both $v(a)$ and $f(a)$}}{kd} = \frac{\alpha}{kd}.
\]
since $U$ can be written as
\[
U = 4PQ - 2P - 2Q + I,
\]
we find that for $a \in \cA$:
\[
U_{a,a} = \frac{4\alpha}{kd}  - \frac{2}{k} - \frac{2}{d} + 1
\]
and then
\[
\tr(U) = |\cA|\cdot U_{a,a} = \tr(U) = \frac{4\alpha |V|}{k} - 2|F| - 2|V| + |\cA|,
\]
by using that $|\cA| = |F|\cdot k = |V|\cdot d$.
\end{proof}

\section{Perfect state transfer} 
\label{sec:pst}

In this section, we define perfect state transfer for the vertex-face walk and give necessary and sufficient conditions for it to occur. In order to do this, we first work in a more general setting for discrete-time quantum walks and define an auxiliary sequence of matrices describing the walk, which satisfy a Chebyshev recurrence. Specifying to the vertex-face walk, we culminate in some fundamental properties in Theorem \ref{thm:pst_reverse_periodicity}. 

In the two-reflection model of a discrete-time quantum walk, the transition matrix $U$ is of the form
\[
U = (2WW^* - I)(2VV^* - I),
\]
where $W \in \cx^{k,n}$ and $V \in \cx^{k,m}$ are matrices with orthonormal columns (i.e. $W^*W = V^*V = I$). 

We now define an auxiliary sequence of matrices corresponding to $U$, which will allow us to work with Hermitian matrices. For all $t \in \ints_{\geq 0}$, let $B_t$ be the matrix that describes the action of $U^t$ on the column space of $V$, with respect to the orthonormal basis formed by the columns of $V$; that is,
\[
B_t := V^* U^t V.
\]
Whereas $U$ is not usually Hermitian, the matrix $B_t$ is Hermitian for all $t$; $B_0 = I$ and, since $(2VV^* - I)$ acts as the identity on $V$, for $t > 0$, we can write
\[
\begin{split}
B_t &= V^* \left( (2WW^* - I)(2VV^* - I) \right)^t V \\
&= V^*\left((2WW^* - I)(2VV^* - I)\right)^{t-1} (2WW^* - I) V
\end{split}
\]
which is clearly Hermitian. In the following theorem, we show that the sequence $\{B_t\}_{t=0}^{\infty}$ satisfies the same recurrence as the Chebyshev polynomials of the first kind; for more background on Chebyshev polynomials, we refer to \cite{Riv1974}. 

\begin{theorem}\label{thm:B_Chebyshev}
For all $t \in \ints_{\geq 0}$, we have
\[
B_t = T_t(B_1),
\]
where $T_t$ is the $t$-th Chebyshev polynomial of the first kind.
\end{theorem}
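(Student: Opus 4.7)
The plan is to prove by induction on $t$ that the sequence $\{B_t\}$ satisfies the three-term recurrence
\[
B_{t+1} = 2 B_1 B_t - B_{t-1}, \qquad t \geq 1,
\]
which is exactly the defining recurrence for the Chebyshev polynomials of the first kind (evaluated at $B_1$). The base cases $B_0 = V^*V = I = T_0(B_1)$ and $B_1 = T_1(B_1)$ are immediate from the definition, and since any polynomial in $B_1$ commutes with $B_1$, the recurrence will yield $B_t = T_t(B_1)$ for all $t \geq 0$.

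To prove the recurrence, write $U = R_W R_V$ where $R_V := 2VV^* - I$ and $R_W := 2WW^* - I$ are self-inverse reflections. Three elementary identities do all of the work:
\begin{enumerate}[(a)]
\item $R_V V = V$ (and hence $V^* R_V = V^*$), since $V^*V = I$;
\item $2 VV^* = R_V + I$, by definition of $R_V$;
\item $U R_V = R_W$, since $R_V^2 = I$.
\end{enumerate}
Using (b) to insert $VV^*$ into $B_1 B_t = V^* U (V V^*) U^t V$, I would compute
\[
2 B_1 B_t \;=\; V^* U (R_V + I) U^t V \;=\; V^* (U R_V) U^t V + B_{t+1}.
\]
By (c) the first term becomes $V^* R_W U^t V$, and then a reflection can be slid through the power of $U$ using $R_W^2 = I$:
\[
R_W U^t \;=\; R_W (R_W R_V)^t \;=\; R_V (R_W R_V)^{t-1} \;=\; R_V U^{t-1}.
\]
Finally, by (a), $V^* R_V U^{t-1} V = V^* U^{t-1} V = B_{t-1}$. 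Assembling these gives $2 B_1 B_t = B_{t+1} + B_{t-1}$, i.e., the desired recurrence.

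The main obstacle, if there is one, is spotting the correct algebraic manipulation: the sandwich identity $2 V V^* = R_V + I$ lets us insert the projector at the cost of a reflection, and that reflection can be slid past $U^t$ to the opposite side using $U R_V = R_W$ and $R_W^2 = I$, where it is absorbed by $V^* R_V = V^*$. No spectral decomposition of $U$ is needed, and the proof is otherwise routine bookkeeping with two reflections.
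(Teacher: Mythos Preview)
Your proof is correct and takes a genuinely different route from the paper's. The paper introduces an auxiliary sequence $D_t := V^* U^t WW^* V$ and establishes a coupled pair of recurrences
\[
B_{t+1} = 2D_t - B_t, \qquad D_{t+1} = 2B_{t+1}D_0 - D_t,
\]
then eliminates $D_t$ to obtain $B_{t+1} = 2B_t B_1 - B_{t-1}$. Your argument bypasses the auxiliary sequence entirely: writing $U = R_W R_V$ and using $2VV^* = R_V + I$ to insert the projector into $2B_1 B_t$, the involution identities $R_V^2 = R_W^2 = I$ let you slide a single reflection through $U^t$ and absorb it on the left via $V^* R_V = V^*$. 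This is shorter, makes the role of the reflection structure more transparent, and needs no bookkeeping beyond the three identities you isolate. Note that you arrive at $B_{t+1} = 2 B_1 B_t - B_{t-1}$ (with $B_1$ on the left) whereas the paper obtains $B_{t+1} = 2 B_t B_1 - B_{t-1}$; both are valid direct computations, and either suffices for the induction since the hypothesis $B_t = T_t(B_1)$ makes $B_t$ commute with $B_1$.
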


\begin{proof}
Since $B_0 = I$, 
it suffices to show that $B_t$ satisfies the recursion
\[
B_{t+1} = 2B_tB_1 - B_{t-1}.
\]
In order to do so, define $D_t = V^* U^t WW^* V$, so that $D_0 = V^* WW^* V$. We claim that $B_t$ and $D_t$ satisfy
\begin{equation}\label{eq:btrecur}
    \begin{cases}
B_{t+1} = 2D_{t} - B_t \\
D_{t+1} = 2B_{t+1}D_0 - D_{t}
\end{cases}
\end{equation}
for all $t \geq 1$. By applying the claim three times, we find
\[
\begin{split}
B_{t+1} &= 2D_{t} - B_t \\
        &= 4B_t D_0 - 2D_{t-1} - B_t \\
        &= 4B_t D_0 - (B_{t} + B_{t-1}) - B_{t} \\
        &= 2B_t (2D_0 - I) - B_{t-1} \\
        &= 2B_t B_1 - B_{t-1},
\end{split}
\]
for all $t \geq 1$, as desired. It remains to prove \eqref{eq:btrecur}. Since $(2VV^* - I)V = V$, we find that
\[
\begin{split}
B_{t+1} &= V^* U^{t}(2WW^* - I)V
        = 2V^* U^{t}WW^*V - V^* U^{t}V
        = 2D_{t} - B_{t}.
\end{split}
\]
Finally, we have
\[
\begin{split}
D_{t+1} &= V^* U^t (2WW^* - I)(2VV^* - I) WW^* V \\
        &= 2V^* U^t (2WW^* - I)VV^*WW^* V - V^* U^t (2WW^* - I)WW^*V \\
        &= 2V^* U^{t+1} VV^* WW^* V - V^* U^t WW^* V \\
        &= 2B_{t+1}V^* WW^* V - D_{t} \\
        &= 2B_{t+1}D_0 - D_{t},
\end{split}
\]
where for the third equality, we used for the first term that $V = (2VV^* - I)V$, and for the second term that $(2WW^* - I)W = W$.
\end{proof}

Denote by $\Ze_u$ the $u$-th standard basis vector, so that $V\Ze_u$ is the $u$-th column of $V$. We now use the recurrence to show symmetry for state transfer from  $V \Ze_u $ to $V \Ze_v $ and $V \Ze_v $ to $V \Ze_u $. 

\begin{lemma}
\label{lem:pst_B_General}
We have $U^{\tau} V \Ze_u = V\Ze_v$  if and only if
$B_\tau (u,v) = B_\tau(v,u) = 1.$
\end{lemma}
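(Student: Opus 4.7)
The plan is to exploit two facts: that $B_\tau$ is Hermitian (as established in the discussion before Theorem \ref{thm:B_Chebyshev}) and that $U$ is unitary while $V$ has orthonormal columns, so $V\Ze_u, V\Ze_v, U^\tau V\Ze_u$ are all unit vectors. From these, both directions follow essentially from Cauchy--Schwarz, with no need to invoke the Chebyshev recurrence of Theorem \ref{thm:B_Chebyshev}.

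For the forward implication, suppose $U^\tau V\Ze_u = V\Ze_v$. Then I would compute
\[
B_\tau(v,u) = \Ze_v^* V^* U^\tau V \Ze_u = (V\Ze_v)^*(V\Ze_v) = 1
\]
using that the columns of $V$ are orthonormal. Since $B_\tau$ is Hermitian, $B_\tau(u,v) = \overline{B_\tau(v,u)} = 1$ as well.

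For the reverse implication, suppose $B_\tau(u,v) = B_\tau(v,u) = 1$. Unpacking the definition,
\[
1 = B_\tau(v,u) = \langle V\Ze_v,\, U^\tau V\Ze_u\rangle.
\]
Now $\|V\Ze_v\| = 1$ by orthonormality of the columns of $V$, and $\|U^\tau V\Ze_u\| = \|V\Ze_u\| = 1$ because $U$ is unitary. Cauchy--Schwarz then gives $|\langle V\Ze_v, U^\tau V\Ze_u\rangle| \le 1$, with equality precisely when the two unit vectors are equal (up to a unimodular scalar, which here must be $1$ since their inner product is exactly $1$). Hence $U^\tau V\Ze_u = V\Ze_v$.

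The argument is essentially a one-liner once one has Hermiticity of $B_\tau$ in hand, so there is no real obstacle; the only subtle point is being careful about the convention for $\langle\cdot,\cdot\rangle$ (conjugate-linear in the first slot) so that $B_\tau(v,u) = \langle V\Ze_v, U^\tau V\Ze_u\rangle$ comes out correctly, and noting that only one of the two entries of $B_\tau$ is needed for the equivalence, with the other following automatically from Hermiticity.
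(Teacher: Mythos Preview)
Your proof is correct and follows essentially the same approach as the paper's own proof: both directions rely on the Hermiticity of $B_\tau$ and the equality case of Cauchy--Schwarz for unit vectors. Your observation that only one of the two entries is actually needed (the other following from Hermiticity) is a nice clarification that the paper leaves implicit.
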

\begin{proof}
If $U^{\tau} V \Ze_u = V\Ze_v$, then by multiplying both sides by $\Ze_v^*V^*$ on the left, we obtain
\[
B_\tau (u,v) = B_\tau(v,u) = \Ze_v^*B_\tau \Ze_u = 1
\]
since $B_\tau$ is Hermitian. Conversely, if $\Ze_v^* B_\tau \Ze_u = 1$, then
\[
\inprod{V\Ze_v}{U^\tau V \Ze_u} = 1,
\]
and as both $V \Ze_v$ and $U^\tau V \Ze_u$ have unit length, this implies $U^\tau V \Ze_u = V \Ze_v$.
\end{proof}

Considering the matrices $B_i$ allows us to connect when the quantum walk takes a specific uniform superposition at $u$ to another vertex $v$ with an algebraic property of the graph, which we will now define. Let $M$ be a Hermitian matrix with rows and columns indexed by a set $\Omega$, and let 
\[
M = \sum_{r= 0}^d \theta_r E_r
\]
be the spectral decomposition of $M$. Then $u,v \in \omega$ are said to be \textsl{strongly cospectral with respect to $M$}
if 
\[
E_r \Ze_v = \pm E_r \Ze_u
\]
for all $r = 0,\ldots, d$. 
Strongly cospectral vertices have been previously studied in the context of continuous-time quantum walks, where $M$ is the adjacency matrix of a graph, see \cite{BanCouGod2017}. We obtain the following directly from Theorem \ref{thm:B_Chebyshev}. 

\begin{corollary}\label{cor:statetransferGeneral}
Let $u,v \in \{1,\ldots,m\}$. Then the following hold:
\begin{enumerate}[(i)]
    \item $U^{\tau} V \Ze_u = V\Ze_v$ if and only if $U^{\tau} V\Ze_v = V\Ze_u$
    \item If $U^{\tau} V \Ze_u = V\Ze_v$, then $u$ and $v$ are strongly cospectral with respect to $B_d$ for all divisors $d$ of $\tau$.
\end{enumerate}
\end{corollary}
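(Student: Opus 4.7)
The plan is to derive both parts directly from Lemma \ref{lem:pst_B_General} and the Chebyshev recurrence in Theorem \ref{thm:B_Chebyshev}, using the composition identity $T_{dk} = T_k \circ T_d$ for the Chebyshev polynomials of the first kind.

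For part (i), Lemma \ref{lem:pst_B_General} gives that $U^{\tau}V\Ze_u = V\Ze_v$ if and only if $B_\tau(u,v) = B_\tau(v,u) = 1$. This condition is manifestly symmetric in $u$ and $v$, so applying the lemma once more (with the roles of $u$ and $v$ swapped) yields $U^{\tau}V\Ze_v = V\Ze_u$. Thus (i) is just the symmetry of the entry-$1$ condition combined with the Hermiticity of $B_\tau$.

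For part (ii), write $\tau = dk$ and use the composition identity $T_\tau = T_k \circ T_d$ together with Theorem \ref{thm:B_Chebyshev} to conclude that $B_\tau = T_k(B_d)$. Let $B_d = \sum_r \theta_r E_r$ be the spectral decomposition. Since $B_d$ is the compression of the unitary $U^d$ to the column space of $V$, it is Hermitian with operator norm at most $1$, so every $\theta_r \in [-1,1]$ and consequently $|T_k(\theta_r)| \le 1$. From Lemma \ref{lem:pst_B_General} (and a Cauchy–Schwarz argument, since $\|B_\tau\Ze_u\|\le 1$ and $\Ze_v^\ast B_\tau\Ze_u = 1$ with $\Ze_v$ a unit vector) we get $B_\tau \Ze_u = \Ze_v$, and hence
\[
\sum_{r} T_k(\theta_r)\, E_r \Ze_u \;=\; \sum_{r} E_r \Ze_v.
\]

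The orthogonality of the spectral idempotents gives $T_k(\theta_r)E_r\Ze_u = E_r\Ze_v$ for every $r$. Taking norms and summing,
\[
1 = \|\Ze_v\|^2 = \sum_r T_k(\theta_r)^2 \|E_r\Ze_u\|^2 \;\le\; \sum_r \|E_r\Ze_u\|^2 = \|\Ze_u\|^2 = 1,
\]
so equality forces $T_k(\theta_r)^2 = 1$ whenever $E_r\Ze_u \neq 0$. On the support of $\Ze_u$, therefore, $T_k(\theta_r) = \pm 1$, and on the complementary eigenspaces $E_r\Ze_u = 0$ implies $E_r\Ze_v = 0$. In both cases $E_r\Ze_v = \pm E_r\Ze_u$, which is exactly strong cospectrality of $u$ and $v$ with respect to $B_d$.

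The only non-routine step is identifying $B_\tau$ as a polynomial in $B_d$; once the Chebyshev composition identity is invoked, the rest is a standard eigenspace argument that mirrors the proof of strong cospectrality for perfect state transfer in the continuous-time setting of \cite{BanCouGod2017}.
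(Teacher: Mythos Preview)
Your proof is correct and follows the same overall architecture as the paper's: part (i) is the symmetry of the entry condition in Lemma \ref{lem:pst_B_General}, and part (ii) proceeds by writing $B_\tau = T_k(B_d)$ via the Chebyshev composition identity and then reading off the relation $T_k(\theta_r)E_r\Ze_u = E_r\Ze_v$ from the spectral decomposition.

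The one genuine difference is in how you extract $T_k(\theta_r)\in\{\pm 1\}$ on the support of $\Ze_u$. The paper invokes part (i) to get the reverse equation $B_\tau\Ze_v = \Ze_u$ as well, whence $E_r\Ze_u = T_k(\theta_r)E_r\Ze_v = T_k(\theta_r)^2 E_r\Ze_u$, forcing $T_k(\theta_r)^2 = 1$ whenever $E_r\Ze_u\neq 0$. You instead use the contraction property $\|B_d\|\le 1$ (so $\theta_r\in[-1,1]$ and $|T_k(\theta_r)|\le 1$) together with a norm-equality argument. Both arguments are clean; the paper's avoids the operator-norm observation at the cost of appealing to part (i), while yours is self-contained for (ii) but needs that extra analytic input. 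A minor remark: your Cauchy--Schwarz justification for $B_\tau\Ze_u = \Ze_v$ is more than necessary --- from the hypothesis $U^\tau V\Ze_u = V\Ze_v$ one gets this immediately by left-multiplying by $V^*$ and using $V^*V = I$.
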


\begin{proof}
Part (i) follows from Lemma \ref{lem:pst_B_General} and the fact that $B_t$ is symmetric for all $t$. 

For (ii), let $d$ be any positive integer that divides $\tau$. Let the following be the spectral decomposition of $B_d$:
\[
B_d = \sum_{\theta} \theta F_\theta .
\]
By Theorem \ref{thm:B_Chebyshev} and by the properties of of $T_t$ under composition, that 
\[
B_\tau = T_\tau(B_1) = T_{\ell}(T_{d}(B_1)) = T_\ell(B_d),
\]
where $\ell = \tau / d$. Since $B_{\tau} \Ze_u = \Ze_v$, we have for every eigenvalue $\sigma$ of $B_d$:
\[
F_\sigma \Ze_v = F_\sigma B_\tau \Ze_u 
=  F_\sigma T_\ell(B_d) \Ze_u 
= F_\sigma \sum_{\theta}T_\ell(\theta) F_\theta \Ze_u 
= T_\ell(\sigma) F_\sigma \Ze_u .
\]
Repeating the argument for $F_\sigma \Ze_u$, since $B_{\tau} \Ze_v = \Ze_u$, we see that $F_\sigma \Ze_u = T_\ell(\sigma) F_\sigma \Ze_v$. 
Thus, we find that 
\[
F_\sigma \Ze_u = T_\ell(\sigma) F_\sigma \Ze_v = T_\ell(\sigma)^2 F_\sigma \Ze_u
\]
and thus $T_\ell(\sigma) \in \{\pm 1\}$ unless $F_{\sigma} \Ze_u = F_{\sigma} \Ze_v = 0$, and the result follows. 
\end{proof}

Now we will apply this idea to the vertex-face walk. 
Recall that for a map $X$, the transition matrix $U$ of $X$ for the vertex-face walk is defined by
\[
U = (2P - I)(2Q-I),
\]
where $P = \Mhat\Mhat^T$ and $Q = \Nhat\Nhat^T$ are the projectors onto the column spaces of $M$ and $N$ respectively, and $\Mhat$ and $\Nhat$ are the normalized arc-face and arc-vertex incidence matrices. When we consider the evolution of a quantum system, we usually take the initial state to be a uniform superposition of all the arcs incident to some vertex $u$; in particular, we consider the state $N \Ze_u$, where $\Ze_u \in \cx^V$ is the elementary basis vector indexed by $u$.

For vertices $u,v \in V$,  if 
\begin{equation}
\label{eq:uv_PST}
U^\tau \Nhat \Ze_u = \Zx,
\end{equation}
where $\Zx \in \cx^\cA$ is a unit length vector that satisfies $\Nhat\Ze_w \circ \Zx = 0$ for all $w \neq v$ (i.e.\ $\Zx$ is any superposition of the arcs incident to $v$), then, if $u,v$ are distinct, we say that there is \textsl{perfect state transfer from $u$ to $v$ at time $\tau \in \ints_{>0}$} and if $u=v$, we say that $X$ is  \emph{periodic at the vertex $u$} at time $\tau$. 
For convenience, we will write \textsl{$uv$-PST} for perfect state transfer from $u$ to $v$. 

By the following lemma, we can simplify the expression in \eqref{eq:uv_PST} for such maps, using the same ideas as \cite[Lemma 3.2.1]{GodZha2019} for the arc-reversal Grover walk on $d$-regular graphs. Note that, in the following lemma, the underlying graph is not necessarily regular, but the two concerning vertices must have the same degree.

\begin{lemma}
\label{lem:uv_PST_simplified}
Let $X$ be a map and $u,v$ be vertices of $X$ with degree $d$. Then there is $uv$-PST at time $\tau$ if and only if
\begin{equation}
\label{eq:uv_PST_simplified}
U^\tau \Nhat \Ze_u = \Nhat \Ze_v.
\end{equation}
\end{lemma}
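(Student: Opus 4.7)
The forward direction is immediate: if $U^\tau \Nhat \Ze_u = \Nhat \Ze_v$, then $\Zx := \Nhat \Ze_v$ is a unit vector supported on the set $S_v$ of arcs incident to $v$, so the defining condition of $uv$-PST holds. The content lies in the reverse direction, where the plan is to upgrade ``$\Zx$ is some unit-norm superposition on arcs at $v$'' to ``$\Zx$ is the specific uniform superposition $\Nhat \Ze_v$.''

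First I would observe that everything in sight is real: the projectors $P, Q$, hence $U$, and the initial vector $\Nhat \Ze_u$ are all real, so $\Zx = U^\tau \Nhat \Ze_u$ is a real unit vector in $\re^\cA$. Write $\Zx = \sum_{a \in S_v} x_a \Ze_a$; the support condition $\Nhat\Ze_w \circ \Zx = 0$ for all $w \neq v$ forces exactly this, since the sets $S_w$ (arcs with tail $w$) partition $\cA$. Normalisation gives $\sum_{a \in S_v} x_a^2 = 1$ and $|S_v| = d$.

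The second ingredient I would use is the invariance of the all-ones vector. By Theorem~\ref{thm:1_eigenspaces}, $\ones \in \col(M) \cap \col(N)$ lies in the $1$-eigenspace of $U$; since $U$ is real orthogonal, $\ones$ is also fixed by $U^T$, so $\ones^T U^\tau = \ones^T$. Applying this to $\Nhat \Ze_u$ yields
\[
\sum_{a \in S_v} x_a \;=\; \ones^T \Zx \;=\; \ones^T \Nhat \Ze_u \;=\; \sqrt{d_u} \;=\; \sqrt{d},
\]
using that each column of $\Nhat$ sums to $\sqrt{d_w}$ and that $d_u = d$.

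The finish is a Cauchy--Schwarz equality argument: from $\bigl(\sum_{a \in S_v} x_a\bigr)^2 = d = |S_v| \sum_{a \in S_v} x_a^2$, equality in Cauchy--Schwarz forces all $x_a$ to be equal, and the sign is fixed by $\sum x_a = \sqrt{d} > 0$, giving $x_a = 1/\sqrt{d}$ for every $a \in S_v$. This is exactly $\Zx = \Nhat \Ze_v$. There is no real obstacle here; the only subtle point is realising that hypothesis on $\Zx$ a priori only pins down its support, and that the extra rigidity comes from combining unitarity (norm preservation) with the conservation law $\ones^T U = \ones^T$ and the assumption $d_u = d_v = d$, without which Cauchy--Schwarz would not be tight.
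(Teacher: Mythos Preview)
Your proof is correct and follows essentially the same route as the paper's: both use that $\ones$ is a $1$-eigenvector of $U$ (hence of $U^T$) to deduce $\ones^T \Zx = \sqrt{d}$, and then invoke the equality case of Cauchy--Schwarz to conclude $\Zx = \Nhat\Ze_v$. The only cosmetic difference is that the paper phrases the last step as $\langle \Nhat\Ze_v, \Zx\rangle = 1$ between unit vectors, while you unpack it in coordinates; your explicit appeal to reality of $\Zx$ is valid but not strictly needed, since the inner-product formulation handles the complex case just as well.
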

\begin{proof}
It is clear that if (\ref{eq:uv_PST_simplified}) holds, then there is $uv$-PST at time $\tau$ by definition. For the converse, suppose that there is $uv$-PST at time $\tau$, so that (\ref{eq:uv_PST}) holds for some appropriate $\Zx$. Note  that $\Nhat\Ze_u = \frac{1}{\sqrt{d}}N\Ze_u$ and that $\ones_{\cA}$ is an eigenvector of $U^T$ with eigenvalue $1$. We have
\[
\sqrt{d} = \inprod{\ones_{\cA}}{\Nhat \Ze_u} 
= \inprod{(U^T)^\tau \ones_{\cA}}{\Nhat \Ze_u} 
= \inprod{\ones_{\cA}}{U^\tau \Nhat \Ze_u} 
= \inprod{\ones_{\cA}}{\Zx}.
\]
Since $\Zx$ takes non-zero entries only on the arcs incident with $v$, we have that 
$\inprod{\ones_{\cA}}{\Zx} = \inprod{N\Ze_v}{\Zx} $
whence we obtain that 
$ \sqrt{d} = \sqrt{d} \inprod{\Nhat \Ze_v}{\Zx}$.
Since both $\Nhat \Ze_v$ and $\Zx$ have length $1$, the equality $\inprod{\Nhat \Ze_v}{\Zx} = 1$ implies that $\Zx = \Nhat \Ze_v$.
\end{proof}

We note that, in the general case, there can only be $uv$-PST if the degree of $v$ is at least the degree of $u$; in the proof of Lemma \ref{lem:uv_PST_simplified}, the general case yields
\[
\sqrt{d(u)} = \sqrt{d(v)} \inprod{\Nhat\Ze_v}{\Zx} \leq  \sqrt{d(v)}
\]
 by Cauchy-Schwarz. We will restrict our attention to perfect state transfer between vertices of equal degree and we can take \eqref{eq:uv_PST_simplified} to be the definition of $uv$-PST at time $\tau$. For periodicity at a vertex $u$, \eqref{eq:uv_PST_simplified} (with $u = v$) is equivalent to the original definition for any map, by Lemma \ref{lem:uv_PST_simplified}.

As in the general case, we will consider, for $t \in \ints_{\geq 0}$, the matrix
\[
B_{t} = \Nhat^T U^t \Nhat. 
\]
It follows directly from Lemmas \ref{lem:pst_B_General} and \ref{lem:uv_PST_simplified} that 
there is $uv$-PST at time $\tau$ if and only if
\[
B_\tau (u,v) = B_\tau(v,u) = 1,
\]
and there is periodicity at $u$ at time $\tau$ if and only if 
$B_\tau (u,u) = 1$.

Though $B$ is not, in general, a stochastic matrix, we note that 
for all $t \in \ints_{\geq 0}$, the vector
$\Zw =  (\sqrt{d(v)})_v$
is an eigenvector for $B_t$ with eigenvalue $1$. In particular, if the graph underlying the map is $d$-regular, every row of $B_t$ sums to $1$. In this setting, we have that $B_0 = I$, $B_1 = 2\Chat\Chat^T - I$, and 
$B_t = T_t(B_1)$
where $T_t$ is the $t$-th Chebyshev polynomial of the first kind, by Theorem \ref{thm:B_Chebyshev}. 

We now apply our results from the general setting and we  establish some fundamental properties of  perfect state transfer in the vertex-face walk. 

\begin{theorem}
\label{thm:pst_reverse_periodicity}
Consider the vertex-face quantum walk on a map $X$, and let $u,v \in V(X)$ be distinct vertices of $X$. Assume that there is $uv$-PST at time $\tau \in \ints_{>0}$. Then
\begin{enumerate}[(i)]
\item there is $vu$-PST at time $\tau$;
\item there is periodicity at both $u$ and $v$ at time $2\tau$;
\item there does not exist a vertex $w$, distinct from $u$ and $v$, such that there is $uw$-PST at any time; and
\item $u$ and $v$ are strongly cospectral with respect to $B_d$ for all divisors $d$ of $\tau$.
\end{enumerate}
\end{theorem}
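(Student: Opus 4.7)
The approach is to treat parts (i), (ii), and (iv) as direct corollaries of the general framework developed in Lemma \ref{lem:pst_B_General} and Corollary \ref{cor:statetransferGeneral}, and to devote real work to part (iii). Parts (i) and (iv) are essentially quotations: part (i) follows because the characterization in Lemma \ref{lem:pst_B_General} (with $V = \Nhat$) shows that $uv$-PST at time $\tau$ is equivalent to $B_\tau(u,v) = B_\tau(v,u) = 1$, a condition symmetric in $u$ and $v$; part (iv) is exactly Corollary \ref{cor:statetransferGeneral}(ii) specialized with $V = \Nhat$. For part (ii), I will combine (i) with composition: since (i) gives $U^\tau \Nhat\Ze_v = \Nhat\Ze_u$, we obtain $U^{2\tau}\Nhat\Ze_u = U^\tau \Nhat\Ze_v = \Nhat\Ze_u$, which is the definition of periodicity at $u$ at time $2\tau$; the identical argument with the roles of $u$ and $v$ swapped gives periodicity at $v$.

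For part (iii), I plan to suppose for contradiction that $uw$-PST occurs at some time $\sigma > 0$ with $w \ne u,v$ and derive $v = w$. The key observation is that by part (ii), applied to both the $uv$-PST and the hypothetical $uw$-PST, the state $\Nhat\Ze_u$ is periodic at both times $2\tau$ and $2\sigma$; let $T > 0$ denote its minimum period, so $T \mid 2\tau$ and $T \mid 2\sigma$. The heart of the argument is a parity calculation: since $v \ne u$, we have $U^\tau \Nhat\Ze_u \ne \Nhat\Ze_u$, so $T \nmid \tau$; combined with $T \mid 2\tau$, this forces $T$ to be even (if $T$ were odd then $\gcd(T,2) = 1$ together with $T \mid 2\tau$ would give $T \mid \tau$) and $\tau$ to be an odd multiple of $T/2$, so $\tau \equiv T/2 \pmod T$. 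The same argument yields $\sigma \equiv T/2 \pmod T$, whence $U^\tau \Nhat\Ze_u = U^{T/2}\Nhat\Ze_u = U^\sigma \Nhat\Ze_u$, i.e.\ $\Nhat\Ze_v = \Nhat\Ze_w$; since $\Nhat$ has orthonormal columns this forces $v = w$, contradicting the assumption.

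The main obstacle is recognizing that part (iii) does \emph{not} require any further spectral machinery beyond (i) and (ii): the strong cospectrality provided by (iv) is not in general strong enough to rule out multiple PST partners for a Hermitian matrix, so the contradiction must instead be extracted from the orbit structure of the unitary $U$ acting on $\Nhat\Ze_u$, together with the constraint that both $\tau$ and $\sigma$ are necessarily odd multiples of $T/2$. Once this viewpoint is adopted the remaining steps are short, and parts (i), (ii), and (iv) reduce to bookkeeping.
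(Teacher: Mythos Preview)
Your argument is correct. Parts (i), (ii), and (iv) match the paper's proof exactly. For part (iii) you take a genuinely different route: the paper instead assumes $\tau$ and $\tau'$ are the \emph{minimal} times of $uv$-PST and $uw$-PST, argues that $\tau' < 2\tau$ and $\tau < 2\tau'$, and then observes that periodicity at $u$ at time $2(\tau-\tau') < \tau$ yields $uv$-PST at the earlier time $\tau - 2(\tau-\tau')$, contradicting minimality of $\tau$. Your approach via the minimum period $T$ of $\Nhat\Ze_u$ is arguably cleaner: it avoids the reduction to minimal PST times, and it makes explicit the structural fact that \emph{every} PST time from $u$ is congruent to $T/2$ modulo $T$, so all such times send $\Nhat\Ze_u$ to the same state. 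The paper's argument is shorter on the page but leaves several steps (why $\tau' < 2\tau$, why periodicity before $\tau$ contradicts minimality of $\tau$) to the reader; your version spells out the mechanism more transparently.
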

\begin{proof}
Clearly, (ii) follows directly from (i). The property (i) follows from Lemma \ref{lem:pst_B_General} and the fact that $B_\tau$ is symmetric. For (iii), assume that $\tau$ is the smallest time at which there is $uv$-PST. If there is some $w \neq u,v$ for which there is $uw$-PST at some time $\tau'$, where $\tau'$ is also minimal, then there is periodicity at $u$ both at time $2\tau'$ and at time $2\tau$. The minimality of $\tau$ and $\tau'$ ensures that both $\tau' < 2\tau$ and $\tau < 2\tau'$. Assume without loss of generality that $\tau' < \tau$. Then there is periodicity at $u$ at time $2(\tau - \tau')$, but
\[
2(\tau - \tau') < 2\tau - \tau = \tau,
\]
contradicting the minimality of $\tau$.

Part (iv) follows directly from applying Corollary \ref{cor:statetransferGeneral} with $V= \Nhat$ and $W=\Mhat$. 
\end{proof}

We note that, in particular, if there is $uv$-pst at any time, the vertices $u$ and $v$ are strongly cospectral with respect to $B_1$. We remark that the proof of (iv) also implies that $T_d(\sigma) = \pm 1$ if $F_\sigma \Ze_u \neq 0$, so the \textsl{eigenvalue support} of $u$ (and also of $v$) is the set $\{\pm 1\}$. 

\section{Periodic maps}\label{sec:periodicity}

We have seen in Theorem \ref{thm:pst_reverse_periodicity} of the previous section that $uv$-PST results in periodicity at $u$ and $v$ at twice the time. Thus if the vertex set partitions into pairs such that perfect state transfer occurs between every pair at time $\tau$, then there is periodicity at every vertex at time $2\tau$. If the automorphism group of the map acts transitively upon the vertex set and there is $uv$-PST for some pair of vertices, then there must be perfect state transfer everywhere (see Theorem \ref{thm:transitive_pst}). Motivated by this, we will turn our attention to maps where periodicity occurs at every vertex. 

Now we will proceed with some formal definitions. 
Let $X$ be an orientable map. If there is periodicity at every vertex at time $\tau$, i.e. if $U^\tau \Nhat = \Nhat$, we say that $X$ is \textsl{periodic at time $\tau$}. Equivalently, $X$ is periodic at time $\tau$ if $U^\tau$ acts as the identity on $\col(N)$. We call $\tau$ the \textsl{period} of $X$ if $\tau$ is minimal.
We start by showing that if both $X$ and dual map $X^*$ are periodic at time $\tau$, then $U^\tau$ is the identity matrix:

\begin{lemma}
\label{lem:map_and_dual_periodic}
If both $U^\tau \Nhat = \Nhat$ and $U^\tau \Mhat = \Mhat$ for some $\tau > 0$, then $U^\tau = I$.
\end{lemma}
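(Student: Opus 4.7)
The plan is to decompose $\cx^\cA$ into two orthogonal subspaces on which $U^\tau$ is easily seen to act as the identity, namely $\col(N) + \col(M)$ and its orthogonal complement $\ker(N^T) \cap \ker(M^T)$.

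First I would note that the hypothesis $U^\tau \Nhat = \Nhat$ means that $U^\tau$ fixes every column of $\Nhat$, and since the columns of $\Nhat$ form an orthonormal basis for $\col(N)$, linearity gives that $U^\tau$ acts as the identity on all of $\col(N)$. An identical argument using $U^\tau \Mhat = \Mhat$ shows that $U^\tau$ acts as the identity on $\col(M)$. Consequently $U^\tau$ acts as the identity on the subspace $\col(N) + \col(M)$.

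Next, I would invoke Theorem \ref{thm:1_eigenspaces}(i), which tells us that $\ker(M^T) \cap \ker(N^T)$ is contained in the $1$-eigenspace of $U$; therefore $U^\tau$ also acts as the identity on $\ker(M^T) \cap \ker(N^T)$. Since $\ker(M^T) \cap \ker(N^T) = (\col(M) + \col(N))^\perp$, these two subspaces are orthogonal complements, so
\[
\cx^\cA = \bigl(\col(N) + \col(M)\bigr) \oplus \bigl(\ker(N^T) \cap \ker(M^T)\bigr).
\]
Writing any $\Zx \in \cx^\cA$ as $\Zx = \Zy + \Zz$ with $\Zy \in \col(N) + \col(M)$ and $\Zz \in \ker(N^T) \cap \ker(M^T)$, we immediately get $U^\tau \Zx = \Zy + \Zz = \Zx$, and hence $U^\tau = I$.

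There is no real obstacle here: the whole argument is a direct assembly of (a) linearity, (b) Theorem \ref{thm:1_eigenspaces}, and (c) the elementary fact that $\col(N) + \col(M)$ and its orthogonal complement jointly span $\cx^\cA$. The only point that requires a small check is moving from $U^\tau \Nhat = \Nhat$ (a statement about columns) to $U^\tau$ being the identity on $\col(N)$ (a statement about subspaces), but this is immediate because $\Nhat$'s columns form a basis for $\col(N) = \col(\Nhat)$.
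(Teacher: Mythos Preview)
Your proof is correct and, in fact, a bit more streamlined than the paper's. Both arguments rest on the same two observations: the hypotheses force $U^\tau$ to be the identity on $\col(N) + \col(M)$, and Theorem~\ref{thm:1_eigenspaces}(i) ensures that $\ker(N^T)\cap\ker(M^T)$ already lies in the $1$-eigenspace of $U$. The paper, however, organizes the argument through the finer orthogonal decomposition of Theorem~\ref{thm:orth_decomposition}, checking separately that $\cW$ and each $\cJ_i$ sit inside $\col(M)+\col(N)$, and handling the $1$-eigenspace $\cK$ trivially. Your route avoids invoking Theorem~\ref{thm:orth_decomposition} altogether by working directly with the coarser two-piece decomposition $(\col(N)+\col(M))\oplus(\ker(N^T)\cap\ker(M^T))$; this is cleaner for the present lemma, while the paper's finer decomposition pays off elsewhere when one needs control over the individual eigenspaces of $U$.
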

\begin{proof}
Recall from Theorem \ref{thm:orth_decomposition} that we can write
\begin{equation}
\label{eq:periodicity_Ueig_decomposition}
\cx^\cA = \cK \oplus \cW \oplus \bigoplus_{i : \hat{\lambda}_i \notin \{0,1\}} \cJ_i,
\end{equation}
where $\cK$ and $\cW$ are the respective $1$- and ($-1$)-eigenspaces of $U$, and where for every eigenvector $\Zv_i$ of $\Chat\Chat^T$ with eigenvalue $\hat{\lambda}_i$, the space $\cJ_i$ is defined by
\[
\cJ_i = \vecspan\{P\Nhat\Zv_i, \Nhat\Zv_i\}.
\]
It suffices to show that if $U^\tau \Nhat = \Nhat$ and $U^\tau \Mhat = \Mhat$, then $U^\tau$ acts as the identity on each of the subspaces in \eqref{eq:periodicity_Ueig_decomposition}. By definition, $U^\tau$ acts as the identity on $\cK$. Furthermore, by Theorem \ref{thm:1_eigenspaces}, we can write 
\[
W = (\col(M) \cap \ker(N^T)) \oplus (\ker(M^T) \cap \col(N)),
\]
which is a subspace of $\col(M) + \col(N)$. In addition, every $\cJ_i$ is a subspace of $\col(M) + \col(N)$ as well. As $U^\tau$ acts as the identity on $\col(M) + \col(N)$, this concludes the proof. 
\end{proof}

In the following theorem, we establish the connection between periodic maps and those where $U^{\tau} =I$;  we see that, in many cases, periodicity of the map at time $\tau$ implies that $U^\tau = I$, for instance, when $\tau$ is even.

\begin{theorem}
\label{thm:periodic_implies_id}
The transition matrix satisfies $U^{\tau} = I$ if and only if the map is periodic at time $\tau \in \ints_{>0}$ where $\tau$ satisfies one of the following:
\begin{enumerate}[(a)]
    \item $\tau$ is even, or 
    \item $\tau$ is odd and $|V| = |F|$. 
\end{enumerate}
Further, if the map is periodic at time $\tau$, then $U^{2\tau} = I$. 
\end{theorem}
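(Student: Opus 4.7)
The plan is to analyze $U^\tau$ summand by summand using the orthogonal decomposition
\[
\cx^\cA = \cK \oplus \cW \oplus \bigoplus_{i : \hat\lambda_i \notin \{0,1\}} \cJ_i
\]
from Theorem \ref{thm:orth_decomposition}, and match the action on each summand against what periodicity of $X$ imposes. The forward direction is essentially immediate: if $U^\tau=I$, then $U^\tau\Nhat=\Nhat$ gives periodicity, and every eigenvalue of $U$ is a $\tau$-th root of unity; when $\tau$ is odd, $-1$ is excluded, so Corollary \ref{cor:U_-1_not_eig} yields $|V|=|F|$.

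For the backward direction, I assume $X$ is periodic at $\tau$. On $\cK$, $U^\tau$ already acts as the identity. On each two-dimensional summand $\cJ_i$, the minimal polynomial $p_i(t)=t^2-(4\hat\lambda_i-2)t+1$ has complex-conjugate roots $e^{\pm i\theta_i}$ with $\theta_i\in(0,\pi)$ (since $\hat\lambda_i\in(0,1)$), so $U^\tau|_{\cJ_i}$ has eigenvalues $e^{\pm i\tau\theta_i}$. Periodicity gives $U^\tau\Nhat\Zv_i=\Nhat\Zv_i$, and since $\Nhat\Zv_i$ is a nonzero element of $\cJ_i$, we must have $e^{i\tau\theta_i}=1$, forcing $U^\tau|_{\cJ_i}=I$.

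The crux is the action on $\cW$, where $U$ acts as $-I$. If $\tau$ is even, $U^\tau=I$ on $\cW$ automatically. For the odd case, I would first use periodicity to see that any $v\in\col(N)\cap\cW$ satisfies $v=U^\tau v=-v$, hence $v=0$, so $\ker(M^T)\cap\col(N)=\{0\}$. A standard rank computation for the projections $\Nhat\Nhat^T$ and $\Mhat\Mhat^T$ then yields $\dim(\ker(M^T)\cap\col(N))=|V|-\rk(C)$ and $\dim(\ker(N^T)\cap\col(M))=|F|-\rk(C)$, so $|V|=|F|$ propagates the triviality of the first summand of $\cW$ in Theorem \ref{thm:1_eigenspaces}(ii) to the second, giving $\cW=\{0\}$. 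For the final claim, if $X$ is periodic at $\tau$ then it is also periodic at $2\tau$, and since $2\tau$ is even, case (a) of the biconditional already established gives $U^{2\tau}=I$.

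The main obstacle is case (b): periodicity only controls $U^\tau$ on $\col(N)$, and one must leverage $|V|=|F|$ together with the symmetry between $\col(M)$ and $\col(N)$ inside $\cW$ to propagate triviality from $\col(N)\cap\cW$ to all of $\cW$.
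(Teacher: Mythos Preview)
Your proof is correct, but it takes a genuinely different route from the paper's.

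The paper avoids the summand-by-summand analysis entirely. It introduces the auxiliary involution
\[
\tilde{U} := U^\tau(2Q-I) = \bigl((2P-I)(2Q-I)\bigr)^{\tau-1}(2P-I),
\]
notes that $U^\tau = I$ is equivalent to $\tilde{U}$ and $2Q-I$ having the same $1$-eigenspace, and then observes that periodicity already forces the $1$-eigenspace of $\tilde{U}$ to contain $\col(N)$. Since both $\tilde{U}$ and $2Q-I$ are symmetric with eigenvalues in $\{\pm 1\}$, it suffices to compare traces; a short cyclic-trace reduction collapses $\tr(\tilde{U})$ to $\tr(2Q-I)$ when $\tau$ is even and to $\tr(2P-I)$ when $\tau$ is odd, which gives the $|V|=|F|$ condition immediately.

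Your approach instead leans on the spectral decomposition of Theorem~\ref{thm:orth_decomposition} and handles $\cK$, each $\cJ_i$, and $\cW$ separately. The argument on the $\cJ_i$ (the conjugate-eigenvalue pair forcing $U^\tau|_{\cJ_i}=I$ once it fixes any nonzero vector) and the dimension count on $\cW$ (using $\dim(\ker(M^T)\cap\col(N))=|V|-\rk(C)$ and its dual) are both sound. This route is more explicit about \emph{where} the obstruction lives, namely the summand $\col(M)\cap\ker(N^T)$ of $\cW$, and it reuses the structural theorems already on hand. The paper's trace trick is shorter and sidesteps the need to argue separately on each $\cJ_i$, but your version arguably makes the role of the $(-1)$-eigenspace more transparent.
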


\begin{proof} Define 
\[
\tilde{U} := U^\tau (2Q-I) = ((2P - I)(2Q - I))^{\tau-1} (2P - I),
\]
so that also $U^\tau = \tilde{U}(2Q-I)$. We see that $U^\tau = I$ if and only if $\tilde{U} = 2Q - I$. Moreover, as $\tilde{U}$ is both unitary and symmetric, it is an involution. In particular, both $\tilde{U}$ and $2Q - I$ are symmetric with eigenvalues in $\{-1,1\}$; we can conclude that
\begin{equation}
\label{eq:Utilde_eigenspace}
U^\tau = I \quad \iff \quad \text{$\tilde{U}$ and $2Q-I$ have the same $1$-eigenspace.}
\end{equation}

Now assume that the map is periodic at time $\tau \in \ints_{>0}$. Then
\[
\Nhat = U^\tau \Nhat = \tilde{U}(2Q - I) \Nhat = \tilde{U} \Nhat,
\]
so the $1$-eigenspace of $\tilde{U}$ contains $\col(N)$, which is the $1$-eigenspace of $2Q-I$. By \eqref{eq:Utilde_eigenspace}, it is now sufficient to show that the multiplicity of the eigenvalue $1$ is the same for $\tilde{U}$ and $2Q-I$. Equivalently, we can show that these matrices have equal trace. By using the cyclic property of the trace, and the fact that $2P - I$ and $2Q -I$ are involutions, the trace of $\tilde{U}$ can be reduced as follows:
\[
\begin{split}
\tr(\tilde{U}) &= \tr\left[((2P - I)(2Q - I))^{\tau-1} (2P - I)\right] \\
&= \tr\left[(2Q - I)((2P - I)(2Q - I))^{\tau-2}\right] \\
&\phantom{..}\vdots  \\
&= \begin{cases}
\tr(2Q - I) \quad &\text{if $\tau$ even;}\\
\tr(2P - I) \quad &\text{if $\tau$ odd.}
\end{cases}
\end{split}
\]
This proves that $U^\tau = I$ if $\tau$ is even, or if $\tau$ is odd and $\tr(2P-I) = \tr(2Q-I)$. The latter happens exactly if $P$ and $Q$ have equal rank, i.e.\ if $|F| = |V|$.

Conversely, assume that $U^\tau = I$. Then the map is certainly periodic at time $\tau$. Moreover, $\tilde{U} = 2Q-I$, so if $\tau$ is odd, then
\[
\tr(2Q - I) = \tr(\tilde{U}) = \tr(2P - I),
\]
as we saw above. Hence $|V| = |F|$.

Regardless of the parity of $\tau$, periodicity of the map at time $\tau$ certainly implies that $U^{2\tau} = I$, since there is also periodicity at time $2\tau$.
\end{proof}

An example for when periodicity of the map at time $\tau$ does not imply $U^\tau = I$ is any map with one vertex and more than one face. For such a map, $N = \ones_{\cA}$, meaning that $U \Nhat = \Nhat$, 
so the map is periodic at time $\tau = 1$. But since $|V| < |F|$, the dimension of the $(-1)$-eigenspace of $U$ is non-zero, so $U \neq I$. Nevertheless, this example leads to the following corollary for maps with a single vertex or face.

\begin{corollary}
\label{cor:1vertex_U^2=I}
For any map with a single vertex or a single face, $U^2 = I$.
\end{corollary}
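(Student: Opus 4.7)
The plan is to deduce the corollary directly from Theorem \ref{thm:periodic_implies_id}, using a duality argument to reduce the single-face case to the single-vertex case.

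For the single-vertex case, I would repeat the observation already made in the paragraph preceding the corollary: if $|V| = 1$, then the arc-vertex incidence matrix is $N = \ones_\cA$, so $\col(N) = \vecspan\{\ones_\cA\}$. By Theorem \ref{thm:1_eigenspaces}(i), $\ones_\cA$ always lies in the $1$-eigenspace of $U$, hence $U\Nhat = \Nhat$, i.e., the map is periodic at time $\tau = 1$. Invoking the final assertion of Theorem \ref{thm:periodic_implies_id} — that periodicity at time $\tau$ implies $U^{2\tau} = I$ — immediately gives $U^2 = I$.

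For the single-face case, I would appeal to duality. As noted in Section \ref{sec:vxfacewalkdefn}, swapping the roles of $P$ and $Q$ shows that the transition matrix of the vertex-face walk on the dual map $X^*$ is $(2Q-I)(2P-I) = U^T$. Since $|F(X)| = 1$ forces $|V(X^*)| = 1$, applying the single-vertex case to $X^*$ yields $(U^T)^2 = I$. Because $(U^T)^2 = (U^2)^T$, this is equivalent to $U^2 = I$.

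I do not expect any real obstacle here: the corollary is essentially a one-line consequence of the theorem just proved, once one observes that in the single-vertex case the column space of $N$ collapses to $\vecspan\{\ones_\cA\}$, and duality handles the other case for free.
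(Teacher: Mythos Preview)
Your proposal is correct and follows essentially the same approach as the paper: the paper also notes that a single-vertex map is periodic at time $1$ (using the observation $N = \ones_\cA$ made just before the corollary), invokes Theorem~\ref{thm:periodic_implies_id} to obtain $U^2 = I$, and handles the single-face case by passing to the dual. Your write-up simply spells out the duality step $(U^T)^2 = I \Rightarrow U^2 = I$ a bit more explicitly than the paper's terse ``in either case, $U^2 = I$.''
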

\begin{proof}
If the map has a single vertex, then the map is periodic at time $1$. If it has a single face, the dual map is periodic at time $1$. In either case, $U^2 = I$ by Theorem \ref{thm:periodic_implies_id}.
\end{proof}

For example, any tree embedded in the plane has a single face, so its transition matrix will satisfy $U^2 = I$. It is natural to ask if perfect state transfer can occur at time $1$ in trees; Proposition \ref{prop:PST_time1} will imply that $K_2$ is the only tree that admits this.

Corollary \ref{cor:1vertex_U^2=I} gives a source for generating examples of maps with periodicity at time $\tau =1$, where $U^\tau \neq  I$. We give an example of this, namely the duals of dipoles with a single face, in Section \ref{sec:1face2vxs}. 
We can ask if such maps also exist for $\tau > 1$. Necessarily, for these maps, $\tau$ must be odd and $|V| \neq |F|$ by Theorem \ref{thm:periodic_implies_id}. More specifically, the lemma below implies that such maps must satisfy $|V| < |F|$:

\begin{lemma}\label{lem:periodic_odd_time} Let $X$ be a map.
\begin{enumerate}[(i)]
    \item Let $\tau > 0$ be odd. The map $X$ is periodic at time $\tau$ if and only if
\[
\col\left(U^{\frac{\tau + 1}{2}}\Nhat\right) \subseteq \col(\Mhat).
\]
In particular, $|V| \leq |F|$.
\item A map $X$ is periodic at time $1$ if and only if $|V| = 1$.
\end{enumerate}
\end{lemma}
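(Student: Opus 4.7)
The plan is to reduce the periodicity condition $U^{\tau}\Nhat = \Nhat$ to a statement about $(2P-I)$ fixing a certain vector, using a simple but useful conjugation identity. The key observation is that
\[
(2Q-I)U(2Q-I) = (2Q-I)(2P-I)(2Q-I)(2Q-I) = (2Q-I)(2P-I) = U^{T},
\]
so telescoping gives $(U^{T})^{k} = (2Q-I)U^{k}(2Q-I)$ for every $k$. Applied to $\Nhat$ and using $(2Q-I)\Nhat = \Nhat$, this simplifies to $(U^{T})^{k}\Nhat = (2Q-I)U^{k}\Nhat$.

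Next, set $s = (\tau+1)/2$ so that $\tau = s + (s-1)$. Since $U$ is orthogonal, $U^{\tau}\Nhat = \Nhat$ is equivalent to $U^{s}\Nhat = U^{-(s-1)}\Nhat = (U^{T})^{s-1}\Nhat$. Writing $U^{s}\Nhat = (2P-I)(2Q-I)U^{s-1}\Nhat$ on the left, and using the identity above on the right, this becomes
\[
(2P-I)\,y = y, \qquad \text{where } y := (2Q-I)U^{s-1}\Nhat.
\]
The equation $(2P-I)y = y$ holds iff $y \in \col(\Mhat)$. On the other hand, $U^{s}\Nhat = (2P-I)y$; decomposing $y = y_{1}+y_{2}$ with $y_{1} = Py$ and $y_{2} = (I-P)y$ gives $U^{s}\Nhat = y_{1}-y_{2}$, so $\col(U^{s}\Nhat) \subseteq \col(\Mhat)$ iff $y_{2} = 0$ iff $y \in \col(\Mhat)$. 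Chaining the equivalences proves the stated iff in (i). Finally, since $U$ is unitary, $\rk(U^{s}\Nhat) = \rk(\Nhat) = |V|$, and being contained in $\col(\Mhat)$ forces $|V| \le \rk(\Mhat) = |F|$.

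For (ii), apply (i) with $\tau = 1$ (so $s=1$, $y = \Nhat$): the map is periodic at time $1$ iff $\col(U\Nhat) \subseteq \col(\Mhat)$, which by the argument above is iff $\col(\Nhat) \subseteq \col(\Mhat)$. By Theorem \ref{thm:1_eigenspaces}(i), $\col(M)\cap \col(N) = \vecspan\{\ones\}$, so this containment forces $\col(\Nhat) = \vecspan\{\ones\}$, i.e.\ $|V| = 1$. The converse is immediate: if $|V| = 1$ then $\Nhat = \ones/\sqrt{|\cA|} \in \col(\Mhat)$, hence $P\Nhat = \Nhat$ and $U\Nhat = (2P-I)\Nhat = \Nhat$.

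The only nonroutine step is spotting the conjugation identity $U^{T} = (2Q-I)U(2Q-I)$; once it is in hand, the rest is bookkeeping with orthogonal projections.
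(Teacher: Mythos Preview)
Your proof is correct and, in fact, slightly tidier than the paper's. Both arguments hinge on the same key conjugation identity $(2Q-I)U(2Q-I)=U^{T}$ (the paper phrases it as $(2P-I)U^{t}(2Q-I)=(U^{T})^{t-1}$, which is equivalent after one extra multiplication by $2P-I$). The difference is in how the forward direction is set up: the paper first invokes Theorem~\ref{thm:periodic_implies_id} to get $U^{2\tau}=I$, hence $U^{\tau}=(U^{T})^{\tau}$, and only then rewrites $U^{(\tau+1)/2}\Nhat$ as a $(2P-I)$-fixed vector. You bypass Theorem~\ref{thm:periodic_implies_id} entirely by observing directly that $U^{\tau}\Nhat=\Nhat$ is equivalent to $U^{s}\Nhat=(U^{T})^{s-1}\Nhat$, which via the conjugation identity becomes $(2P-I)y=y$ for $y=(2Q-I)U^{s-1}\Nhat$. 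Your additional step, checking that $\col(U^{s}\Nhat)\subseteq\col(\Mhat)$ is itself equivalent to $y\in\col(\Mhat)$, gives you the converse direction of the biconditional as well; the paper's proof only establishes the forward implication explicitly. Part~(ii) is handled identically in both.
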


\begin{proof}
For part (i), if $X$ is periodic at time $\tau$, then $U^{2\tau} = I$ by Theorem \ref{thm:periodic_implies_id}, so we have that $U^{\tau}$ is an involution and thus $U^{\tau} = (U^T)^{\tau}$. 
Thus, the periodicity implies that $(U^T)^{\tau}\Nhat = \Nhat$ and we obtain that
\[
(U^T)^{\frac{\tau-1}{2}}\Nhat = U^{\frac{\tau+1}{2}}\Nhat
\]
by multiplying by $U^{\frac{\tau+1}{2}}$ on both sides. 
Since $\Nhat = (2Q-I)\Nhat$ and $(2P-I) U^t (2Q-I) = (U^T)^{t-1}$, we can write
\[
(2P-I)U^{\frac{\tau+1}{2}}\Nhat  =(U^T)^{\frac{\tau-1}{2}}\Nhat = U^{\frac{\tau+1}{2}}\Nhat
\]
Thus $U^{\frac{\tau + 1}{2}} \Nhat$ is invariant under $2P - I$, so the columns of $U^{\frac{\tau + 1}{2}} \Nhat$ are in $\col(M)$. In particular, since multiplying by  $U^{\frac{\tau + 1}{2}}$ preserves the orthonormality of the columns of $\Nhat$:
\[
|V| = \dim(\col(\Nhat)) \leq \dim(\col(\Mhat)) = |F|.
\]
For part (ii), consider if $U\Nhat = \Nhat$. Then $\col(\Nhat) \subseteq \col(\Mhat)$ by Lemma \ref{lem:periodic_odd_time}, which implies that $\col(\Nhat) = \vecspan\{\ones\}$ by Theorem \ref{thm:1_eigenspaces}. Hence $|V| = 1$. The other direction was is discussed above.
\end{proof}

Note that this result also implies that periodicity at time $1$ can only occur for maps with a single vertex: if $\col(\Nhat) \subseteq \col(\Mhat)$, then $\col(\Nhat) = \vecspan\{\ones\}$.

In Section \ref{sec:grids}, we give for any $\tau > 0$ a map with $|V| = \tau = |F|$ that satisfies $U^\tau = I$. We then show that, given such a map, we can add a few edges in a way that retains the periodicity at time $\tau$. The newly obtained map has the same number of vertices, but the number of faces has increased. Hence by Theorem \ref{thm:periodic_implies_id}, if $\tau$ is odd, $U^\tau \neq I$. In this way, we show in Lemma \ref{lem:variant1-m-grid} that, for all odd $\tau$, there exists a map that is periodic at time $\tau$ such that $U^\tau \neq I$. This shows that the statement of Theorem \ref{thm:periodic_implies_id} is best possible. 
The following relates periodic maps to perfect state transfer. 

\begin{theorem}
\label{thm:transitive_pst}
Assume that $X$ is a map such that the automorphism group of $X$ acts transitively on the set of vertices $V$. Let $u$ and $v$ be vertices of $X$. The following are true.
\begin{enumerate}[(i)]
    \item If there $uv$-PST at time $\tau$, then for any vertex $x$ there is a unique vertex $y$ such that there is $xy$-PST at time $\tau$. Moreover, $U^{2\tau} = I$.
    \item If there is periodicity at $u$ at time $\tau$, then the map is periodic at time $\tau$.
    \item If there is $uv$-PST at time $1$, then $V = \{u,v\}$.
\end{enumerate}
\end{theorem}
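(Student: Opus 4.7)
The plan is to first establish a key commutation: for every $\pi \in \Aut(X)$, the arc-permutation $\pi_{\cA}$ commutes with $U$. Proposition \ref{prop:regmaps_perms} gives $\pi_{\cA} N = N \pi_V$ and $\pi_{\cA} M = M \pi_F$; since an automorphism preserves vertex- and face-degrees, the same identities hold with the normalized matrices $\Nhat, \Mhat$ in place of $N, M$. Taking adjoints (permutation matrices are orthogonal) then yields $\pi_{\cA} Q = Q \pi_{\cA}$ and $\pi_{\cA} P = P \pi_{\cA}$, hence $\pi_{\cA}$ commutes with $U = (2P-I)(2Q-I)$ and with every power $U^t$. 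This workhorse fact will carry both (i) and (ii).

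For (ii), if $U^{\tau} \Nhat \Ze_u = \Nhat \Ze_u$ and $x \in V$, vertex-transitivity supplies $\pi \in \Aut(X)$ with $\pi(u) = x$, and conjugating gives $U^{\tau} \Nhat \Ze_x = U^{\tau} \pi_{\cA} \Nhat \Ze_u = \pi_{\cA} U^{\tau} \Nhat \Ze_u = \pi_{\cA} \Nhat \Ze_u = \Nhat \Ze_x$, so the map is periodic at time $\tau$. For (i), given $uv$-PST at time $\tau$, Theorem \ref{thm:pst_reverse_periodicity}(ii) gives periodicity at $u$ at time $2\tau$, part (ii) upgrades this to periodicity of $X$ at time $2\tau$, and Theorem \ref{thm:periodic_implies_id} (with $2\tau$ even) yields $U^{2\tau} = I$. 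For any $x \in V$, choose $\pi$ with $\pi(u) = x$ and set $y := \pi(v)$; the same commutation gives $U^{\tau} \Nhat \Ze_x = \pi_{\cA} U^{\tau} \Nhat \Ze_u = \pi_{\cA} \Nhat \Ze_v = \Nhat \Ze_y$, and uniqueness of the partner $y$ is Theorem \ref{thm:pst_reverse_periodicity}(iii) applied at $x$.

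For (iii), a different idea is needed, exploiting the one-step structure of $U$. Since $(2Q-I)\Nhat = \Nhat$, the hypothesis $U \Nhat \Ze_u = \Nhat \Ze_v$ simplifies to $(2P-I)\Nhat \Ze_u = \Nhat \Ze_v$, i.e., $2 P \Nhat \Ze_u = \Nhat \Ze_u + \Nhat \Ze_v$. The left-hand side lies in $\col(\Mhat)$ while the right-hand side lies in $\col(\Nhat)$, so by Theorem \ref{thm:1_eigenspaces}(i) this vector is a scalar multiple of $\ones_{\cA}$. But $\Nhat \Ze_u + \Nhat \Ze_v$ has support exactly on the arcs tailed at $u$ or $v$ (with values $1/\sqrt{d(u)}$ and $1/\sqrt{d(v)}$ respectively), so every arc must be tailed at $u$ or $v$, and since every vertex of a connected map with at least two vertices has an outgoing arc, this forces $V = \{u,v\}$. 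The step most likely to need care is (iii): recognizing that the hypothesis produces a vector in $\col(M)\cap\col(N)$ and then leveraging the one-dimensionality of that intersection to extract the geometric conclusion. Everything else reduces to routine automorphism-chasing via the commutation lemma.
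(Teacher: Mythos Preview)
Your arguments for (i) and (ii) match the paper's: the same commutation $\pi_{\cA}U = U\pi_{\cA}$ derived from Proposition~\ref{prop:regmaps_perms}, followed by the same appeals to Theorem~\ref{thm:pst_reverse_periodicity} and Theorem~\ref{thm:periodic_implies_id}. The paper routes the PST conclusion through the matrix $B_\tau$ (showing $\pi_V^T B_\tau \pi_V = B_\tau$ and invoking Lemma~\ref{lem:pst_B_General}), while you apply the commutation directly to $U^{\tau}\Nhat\Ze_u$; these are equivalent.

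For (iii) you take a genuinely different route. The paper first invokes part (i) to obtain $U^2 = I$, and then appeals forward to Proposition~\ref{prop:PST_time1}, which characterizes PST at time $1$ under the hypothesis $U^2 = I$ using the explicit form $\Chat\Chat^T = \Zx\Zx^T$ from Lemma~\ref{lem:U^2=I}. Your argument is more direct: from $(2P-I)\Nhat\Ze_u = \Nhat\Ze_v$ you extract a vector in $\col(M)\cap\col(N)$ and use Theorem~\ref{thm:1_eigenspaces}(i) to force it to be a multiple of $\ones_{\cA}$, which immediately pins down the vertex set. This bypasses both the $U^2 = I$ step and Proposition~\ref{prop:PST_time1}, and in fact does not use vertex-transitivity at all --- so your proof of (iii) actually establishes a slightly stronger statement than the theorem claims. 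The paper's route, on the other hand, fits into its broader program of classifying maps with $U^2 = I$.
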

\begin{proof}
For (i), consider any automorphism $\pi$ of $X$, and write $\pi_\cA$ and $\pi_V$ for the permutation matrices that correspond to the action of $\pi$ on the sets of arcs and vertices respectively. By Proposition \ref{prop:regmaps_perms}, we know that $\pi_\cA \Nhat = \Nhat \pi_V$ and $\pi_\cA \Mhat = \Mhat \pi_F$. From this, we deduce that $\pi_\cA$ commutes with both $P$ and $Q$, and hence also with $U$. This implies that
\[
\pi_V^T B_\tau \pi_V = \Nhat^T \pi_\cA^T U^\tau \pi_\cA \Nhat = \Nhat^T U^\tau \Nhat = B_\tau.
\]
Then if $u$ (resp.\ $v$) is mapped to the vertex $x$ (resp.\ $y$) under $\pi$, we have
\[
\Ze_{x}^T B_\tau \Ze_{y} = \Ze_{u}^T \pi_{V}^T B_\tau \pi_{V}\Ze_{v} = \Ze_{u}^T B_\tau \Ze_{u} = 1,
\]
meaning that there is $xy$-PST at time $\tau$ by Lemma  \ref{lem:pst_B_General} (and $yx$-PST by Theorem \ref{thm:pst_reverse_periodicity}(i)). Since the action of the automorphism group is transitive on $V$, any vertex $w$ is the image of $u$ under the action of some automorphism, so $|V|$ can be partitioned into pairs of vertices that admit perfect state transfer to each other at time $\tau$. Theorem \ref{thm:pst_reverse_periodicity}(ii) then implies that the map is periodic at time $2\tau$, so $U^{2\tau} = I$ by Theorem \ref{thm:periodic_implies_id}.
Property (ii) is similar to (i): if there is periodicity at one vertex, there must be periodicity at every vertex, so the map is periodic.

Finally, property (iii) follows from (i) and Proposition \ref{prop:PST_time1}: if there is $uv$-PST at time $1$, then $U^2 = I$. By definition, $u$ and $v$ have the same vertex-degree, so the Proposition applies.

\end{proof}

If a map $X$ has a partition of vertices into pairs, such that $X$ admits perfect state transfer between every pair, then we can give a crude bound on the time of perfect state transfer, using some basic algebraic number theory. In the following lemma,  $\varphi$ denotes the Euler totient function.
\begin{lemma}
    If $U^s = I$ and $U^t \neq I$ for $0<t<s$, then $2\min\{n,f\} \geq \varphi(s')$ for any divisor $s'$ of $s$ such that there exists an eigenvalue of $U$ which is a primitive $s'$th root of unity. 
\end{lemma}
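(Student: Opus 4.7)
My plan is to use the spectral decomposition of $U$ given in Theorem \ref{thm:orth_decomposition} together with the rationality of $U$. First I would observe that the projections $P = \Mhat\Mhat^T$ and $Q = \Nhat\Nhat^T$ have rational entries: for arcs $a,b$, $(P)_{a,b}$ equals $1/k_{f(a)}$ when $f(a) = f(b)$ and vanishes otherwise, and similarly $(Q)_{a,b} = 1/d_{v(a)}$ when $v(a) = v(b)$ and is zero otherwise. Hence $U = (2P - I)(2Q - I)$ has rational entries and its characteristic polynomial lies in $\rats[t]$; consequently, whenever a primitive $s'$-th root of unity is an eigenvalue of $U$, the cyclotomic polynomial $\Phi_{s'}$ divides the characteristic polynomial and so all $\varphi(s')$ primitive $s'$-th roots of unity appear as eigenvalues of $U$.

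The cases $s' \in \{1,2\}$ give $\varphi(s') = 1$ and the inequality is trivial (any map has at least one vertex and one face), so I would assume $s' \geq 3$. In that case all $\varphi(s')$ primitive $s'$-th roots of unity are non-real and split into $\varphi(s')/2$ complex conjugate pairs. By Theorem \ref{thm:orth_decomposition}, every non-real eigenvalue of $U$ is a root of some quadratic $p_i(t) = t^2 - (4\hat{\lambda}_i - 2)t + 1$, where $\hat{\lambda}_i$ is an eigenvalue of $\Chat\Chat^T$ lying in $(0,1)$. Since the sum of the roots of $p_i$ determines $\hat{\lambda}_i$, the map sending $\hat{\lambda}_i$ to the unordered pair $\{\alpha, \bar{\alpha}\}$ of roots is injective on $(0,1)$. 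Therefore each of the $\varphi(s')/2$ conjugate pairs of primitive $s'$-th roots of unity in the spectrum of $U$ corresponds to a distinct eigenvalue of $\Chat\Chat^T$ in $(0,1)$.

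Finally I would count: $\Chat\Chat^T$ is an $|V| \times |V|$ matrix, so it has at most $n = |V|$ eigenvalues, and the non-zero eigenvalues of $\Chat\Chat^T$ coincide with those of the $|F| \times |F|$ matrix $\Chat^T\Chat$, yielding the same bound with $f = |F|$. Combining, $\varphi(s')/2 \leq \min\{n,f\}$, which rearranges to the desired inequality. The most delicate step is the rationality argument that upgrades ``some primitive $s'$-th root of unity is an eigenvalue'' to ``all $\varphi(s')$ of them are''; the injectivity step is then immediate from reading off the sum of roots of $p_i$, and disposing of $s' \in \{1,2\}$ separately ensures the conjugate-pair count $\varphi(s')/2$ is meaningful.
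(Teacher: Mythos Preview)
Your proof is correct and reaches the same conclusion, but via a somewhat different route than the paper. The paper argues with field extensions: it takes $\mathbb{K}$ to be the splitting field over $\rats$ of the minimal polynomial $\psi$ of $\Chat\Chat^T$, notes that each eigenvalue of $U$ lies in an extension $\mathbb{L}/\mathbb{K}$ of degree at most $2$ (since it satisfies the quadratic $p_i$ with coefficients in $\mathbb{K}$), and then uses $\rats(\zeta) \subseteq \mathbb{L}$ together with $[\mathbb{K}:\rats] \leq \deg\psi \leq \min\{n,f\}$ to conclude $\varphi(s') \leq 2\min\{n,f\}$.

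Your argument is more elementary: you bypass the tower of fields entirely by observing that rationality of $U$ forces all $\varphi(s')$ primitive $s'$-th roots of unity into the spectrum, pair them into $\varphi(s')/2$ complex-conjugate pairs (for $s' \geq 3$), and then use Theorem~\ref{thm:orth_decomposition} to inject these pairs into the set of eigenvalues of $\Chat\Chat^T$ lying in $(0,1)$. Both proofs share the same three ingredients (rationality of $U$, the quadratic relation from Theorem~\ref{thm:orth_decomposition}, and the fact that $\Chat\Chat^T$ and $\Chat^T\Chat$ have the same nonzero spectrum), but your counting argument avoids any explicit use of field-extension degrees. The paper's approach, on the other hand, makes the role of the splitting field $\mathbb{K}$ transparent, which pays off elsewhere: when $\Chat\Chat^T$ has rational eigenvalues one gets $\mathbb{K} = \rats$ and hence $\varphi(s') \leq 2$, which is exactly the mechanism behind Lemma~\ref{lem:CCT_rational}.
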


\begin{proof}
    Let $\psi(t):= \psi(\Chat\Chat^T, t)$ denote the minimal polynomial of $\Chat\Chat^T$. Since $\Chat\Chat^T$ has entries in $\rats$, the roots of $\psi(t)$ lie in some field extension of $\rats$; let $\mathbb{K}$ be the splitting field of $\psi$ over $\rats$. 

    Let $\lambda$ be an eigenvalue of $U$. By Theorem \ref{thm:orth_decomposition}, the minimal polynomial of $\lambda$ over $\mathbb{K}$ has degree at most $2$. Let $\mathbb{L}$ be the splitting field of the minimal polynomial of $\lambda$ over $\mathbb{K}$. Since $U^s = I$, we see that every eigenvalue of $U$ must be a $s$th root of unity. Thus, for some $s'$ dividing $s$, we have that $\lambda$ is a primitive $s'$ root of unity and so $\lambda \in \rats(\zeta)$ where $\zeta = e^{\frac{2\pi i}{s'}}$. Since $U$ is a rational matrix, its characteristic polynomial has rational coefficient and thus the algebraic conjugates of $\lambda$ must occur as eigenvalues of $U$ with equal multiplicity as $
    \lambda$ and so we see that $ \rats(\zeta) \subseteq \mathbb{L}$. 

    Now we consider the indices of these field extension and we see that  $[\mathbb{L}: \mathbb{K}] \leq 2$ and
    \[
    [\mathbb{K}:\rats] \leq \deg(\psi(t)) \leq \min(n,f),
    \]
    since $\Chat\Chat^T$ and $\Chat^T\Chat$ have the same minimal polynomial, up to a factor of $t$, and the degree is upper-bounded by size of the matrix. We also have that 
    \[
  [\mathbb{L}:\rats]=  [\mathbb{L}: \mathbb{K}][\mathbb{K}:\rats] \geq [\rats(\zeta) :\rats] = \varphi(s')
    \]
    where $\varphi$ denotes the Euler totient function. 
\end{proof}

A well-known, elementary lower bound for the Euler totient function of a number $n$ is $\varphi(n) \geq \sqrt{\frac{n}{2}}$. We obtain that $s' \leq 2(2\min\{n,f\})^2 = 8 (\min\{n,f\})^2$. Let $S$ be the set of integer $s'$ such that $U$ has an eigenvalue which is a primitive $s'$ root of unity. Since $s$ is the smallest positive integer such that $U^s = I$, we see that $s$ is the least common multiple of the elements of $S$. If $s$ is prime, then $s \in S$ and we have that 
\[
s \leq 8 (\min\{n,f\})^2.
\]
Otherwise, the distinct elements of $S$ are each upper bounded by $8 (\min\{n,f\})^2$ and thus 
\[
s \leq \prod_{s' \in S'} s'\leq (8 (\min\{n,f\})^2)!
\]
where $S'$ is the set of distinct elements of $S$.

\begin{corollary}\label{cor:easybd}
 If map $X$ has a partition of vertices into pairs, such that $X$ admits perfect state transfer between every pair, then the time $\tau$ where perfect state transfer first occurs is upper bounded as follows:
 \[
\tau \leq \frac{1}{2}(8 (\min\{n,f\})^2)!.
 \]
\end{corollary}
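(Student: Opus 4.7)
The plan is to leverage the preceding Lemma's upper bound on the multiplicative order $s$ of $U$, combined with the periodicity results of Section \ref{sec:periodicity}, to relate this order to the first PST time $\tau$.

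First I would observe that the hypothesis forces $U^{2\tau}=I$. Indeed, since every vertex participates in a PST pair at time $\tau$, Theorem \ref{thm:pst_reverse_periodicity}(ii) yields periodicity at every vertex at time $2\tau$, so the map itself is periodic at time $2\tau$. As $2\tau$ is even, Theorem \ref{thm:periodic_implies_id} promotes this to $U^{2\tau}=I$, and in particular $s \mid 2\tau$.

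The key step is to show that the minimality of $\tau$ forces the sharp equality $\tau = s/2$. If $\tau > s$, then $\tau - s$ would be a strictly smaller positive PST time (because $U^{\tau-s}\Nhat = U^{\tau}\Nhat$), contradicting minimality. The edge case $\tau = s$ is ruled out because $U^{\tau}=I$ would give $\Nhat\Ze_u = \Nhat\Ze_v$ for distinct PST partners $u\neq v$, which is impossible since these columns of $\Nhat$ have disjoint supports (the arcs incident to $u$ and to $v$ are disjoint). Hence $\tau < s$, and writing $2\tau = ks$ with $k \in \ints_{>0}$ then forces $k=1$, i.e.\ $\tau = s/2$.

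Plugging this into the bound $s \leq (8(\min\{n,f\})^2)!$ established by the preceding Lemma and the discussion following it yields $\tau \leq \tfrac{1}{2}(8(\min\{n,f\})^2)!$. The main obstacle is pinning down the sharp identification $\tau = s/2$ rather than settling for the weaker inequality $\tau \leq s$; without this, the factor of $\tfrac{1}{2}$ in the statement would not be available. Everything else is immediate bookkeeping from the already-established results.
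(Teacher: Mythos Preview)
Your proof is correct and follows the same approach as the paper: establish $U^{2\tau}=I$ via Theorem \ref{thm:pst_reverse_periodicity}(ii) and Theorem \ref{thm:periodic_implies_id}, then invoke the bound on $s$ from the preceding discussion. In fact, your argument is more complete than the paper's, which simply asserts ``$2\tau \leq (8(\min\{n,f\})^2)!$'' without justifying why the bound on the minimal order $s$ transfers to $2\tau$; your minimality argument showing $2\tau = s$ exactly fills that gap.
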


\begin{proof}
    If $X$ admits perfect state transfer at time $\tau$ between every pair of vertices, then $X$ is periodic at time $2\tau$ and, by Theorem \ref{thm:periodic_implies_id}, $U^{2\tau} =I$. By the discussion above, we see that $2\tau \leq (8 (\min\{n,f\})^2)!$ and the result follows.
\end{proof}

\section{Periodic maps with $U^{s}= I$}\label{sec:powerofUisI}

Any rotary map which admits perfect state transfer or  periodicity must have the property that some non-zero power of $U$ is the identity matrix, by Theorem \ref{thm:transitive_pst}. In conjunction with the observations from computing powers of $U$ for the maps in the census of regular and chiral maps, as summarized in Section \ref{sec:computations}, we are motivated to study maps for which $U^s = I$ for some $s >0$. 
In this section, we give necessary and sufficient conditions for $U^s =I$ when $s=1,2$ and we give partial characterizations when $s >2$.

 A \textsl{quasi-tree} is an embedded graph with exactly one face. Dually, \textsl{bouquet} is an embedded graph with exactly one vertex. A \textsl{quasi-tree bouquet} is a bouquet that is also a quasi-tree and forms exactly the characterization of maps for which the transition matrix $U$ satisfies $U = I$. Quasi-tree bouquets have been studied in various works, including \cite{YanJin2022, EllEll2022}. Lemma \ref{lem:U=I} gives another characterization of quasi-tree bouquets. 
 
\begin{lemma}
\label{lem:U=I}
The transition matrix satisfies $U = I$ if and only if the map is a quasi-tree bouquet.
\end{lemma}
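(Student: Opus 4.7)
The plan is to reduce the equation $U = I$ to the equality of the two projectors $P$ and $Q$, and then use Theorem~\ref{thm:1_eigenspaces} to force the common column space to be one-dimensional. Concretely, both $2P - I$ and $2Q - I$ are self-inverse reflections, so
\[
U = I \iff (2P - I)(2Q - I) = I \iff 2P - I = 2Q - I \iff P = Q,
\]
which in turn is equivalent to $\col(M) = \col(N)$. This is the technical heart of the equivalence, and the rest is essentially bookkeeping.

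For the forward direction, suppose $U = I$, so $\col(M) = \col(N)$. Since the columns of $\Nhat$ (resp.\ $\Mhat$) are orthonormal, $\dim \col(N) = |V|$ and $\dim \col(M) = |F|$. But by Theorem~\ref{thm:1_eigenspaces}(i),
\[
\col(M) \cap \col(N) = \vecspan\{\ones\},
\]
and the left-hand side now equals both $\col(M)$ and $\col(N)$. Thus $|V| = |F| = 1$, and the map is a quasi-tree bouquet.

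For the reverse direction, if the map has a single vertex and a single face, then $N$ and $M$ are each a single column indexed by the arcs, and the all-ones structure forces $\Nhat = \Mhat = |\cA|^{-1/2}\ones_\cA$. Hence $P = Q = |\cA|^{-1}J$, so $2P - I = 2Q - I$ is a single reflection and $U = (2P - I)^2 = I$ since $(2P-I)^2 = 4P - 4P + I = I$ for any orthogonal projector $P$.

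The main (minor) obstacle is verifying that $U = I$ really implies $P = Q$ rather than something weaker; but this is immediate once one observes that $2Q - I$ is its own inverse, so there is no subtlety. Everything else reduces to dimension-counting against Theorem~\ref{thm:1_eigenspaces}(i).
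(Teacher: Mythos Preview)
Your proof is correct, but it takes a different route from the paper. The paper's argument invokes the periodicity machinery of Section~\ref{sec:periodicity}: from $U = I$ one has periodicity at time $1$ for both the map and its dual, so Lemma~\ref{lem:periodic_odd_time}(ii) forces $|V| = 1$ and (by duality) $|F| = 1$; conversely, a single vertex and a single face give periodicity at time $1$ for both the map and its dual, and Lemma~\ref{lem:map_and_dual_periodic} then yields $U = I$. Your argument is more direct and self-contained: you observe that since $2Q - I$ is an involution, $U = I$ is literally equivalent to $P = Q$, hence to $\col(M) = \col(N)$, and Theorem~\ref{thm:1_eigenspaces}(i) immediately pins this common column space down to $\vecspan\{\ones\}$. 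Both approaches ultimately rest on Theorem~\ref{thm:1_eigenspaces}(i) (the paper's Lemma~\ref{lem:periodic_odd_time}(ii) uses it under the hood), but you reach it in one step, whereas the paper's version situates the lemma within the broader periodicity framework.
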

\begin{proof}
This follows from Lemma \ref{lem:map_and_dual_periodic}, part (ii) of Corollary \ref{lem:periodic_odd_time}, and duality.
\end{proof}

Now, we turn our attention to the cases for which  $U^{s} = I$ for some $s > 1$. 
The maps for which $U^2 = I$ are characterised in the following lemma. Recall our notation for $P =  \Mhat\Mhat^T$ and $Q = \Nhat\Nhat^T$. Recall also that the vertex-face incidence matrix $C$ is given by $C = N^TM$, where the $(v,f)$-entry of $C$ is equal to the number of times the vertex $v$ appears on the facial walk of the face $f$. Recall also that the normalized vertex-face incidence matrix $\Chat$ is given by
$
\Chat = \Nhat^T\Mhat.
$ 

Finally, $J_{A \times B}$ denotes the all-ones matrix indexed by sets $A$ and $B$.
\begin{lemma}
\label{lem:U^2=I} 
The following are equivalent:
\begin{enumerate}[(i)]
    \item $U^2 = I$;
    \item $PQ = |\cA|^{-1} J_{\cA \times \cA}$;
    \item $C = |\cA|^{-1} D J_{V \times F} \Delta$;
    \item $\Chat\Chat^T = |\cA|^{-1}D^{\frac{1}{2}}J_{V\times V} D^{\frac{1}{2}}$;
    \item every vertex $v$ is traversed $|\cA|^{-1}d(v)d(f)$ times by the facial walk of any face $f$, where $d(v)$ is the degree of $v$ and $d(f)$ is the degree of $f$.
\end{enumerate}
\end{lemma}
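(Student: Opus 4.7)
The plan is to establish the equivalences via the chain (i)$\iff$(ii)$\iff$(iii)$\iff$(iv), with (iii)$\iff$(v) being an immediate entrywise restatement. The linchpin is the observation that $U^2 = I$ is equivalent to $P$ and $Q$ commuting: since $2P-I$ and $2Q-I$ are involutions we have $U^{-1} = (2Q-I)(2P-I)$, so $U^2 = I$ iff $(2P-I)(2Q-I)=(2Q-I)(2P-I)$ iff $PQ = QP$. Under commutativity, $PQ$ is itself the orthogonal projector onto $\col(P)\cap\col(Q)=\col(M)\cap\col(N)$, and by Theorem~\ref{thm:1_eigenspaces}(i) this intersection is the line $\vecspan\{\ones_{\cA}\}$, whose projector is precisely $|\cA|^{-1}\ones_{\cA}\ones_{\cA}^T = |\cA|^{-1}J_{\cA\times\cA}$. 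The reverse implication is immediate, since symmetry of $|\cA|^{-1}J_{\cA\times\cA}$ forces $PQ = (PQ)^T = QP$. This gives (i)$\iff$(ii).

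For (ii)$\iff$(iii), I would use the factorizations $P = M\Delta^{-1}M^T$ and $Q = ND^{-1}N^T$ together with $M^TN = C^T$ to write $PQ = M\Delta^{-1}C^T D^{-1}N^T$. To pass from (ii) to (iii), multiply the identity $PQ = |\cA|^{-1}J_{\cA\times\cA}$ on the left by $M^T$ and on the right by $N$; using $M^TM = \Delta$, $N^TN = D$, $M^T\ones_{\cA} = \Delta\ones_F$, and $N^T\ones_{\cA} = D\ones_V$, the left side collapses to $C^T$ while the right becomes $|\cA|^{-1}\Delta J_{F\times V}D$, which is (iii) transposed. The reverse substitutes (iii) back into the expression for $PQ$ and simplifies using $M\ones_F = N\ones_V = \ones_{\cA}$ to recover $|\cA|^{-1}\ones_{\cA}\ones_{\cA}^T$. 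The equivalence (iii)$\iff$(v) is then immediate: the $(v,f)$-entry of $|\cA|^{-1}DJ_{V\times F}\Delta$ is $|\cA|^{-1}d(v)d(f)$, and by definition $C(v,f)$ counts the number of times $v$ appears on the facial walk of $f$.

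For (iii)$\iff$(iv), the forward direction is a direct substitution into $\Chat\Chat^T = D^{-1/2}C\Delta^{-1}C^TD^{-1/2}$ using the identity $J_{V\times F}\Delta J_{F\times V} = |\cA|J_{V\times V}$. The reverse direction is more delicate and is where the main obstacle lies, since $\Chat\Chat^T$ does not in general determine $\Chat$. Writing $r_v$ for the $v$-th row of $\Chat$, condition (iv) reads $r_u\cdot r_v = |\cA|^{-1}\sqrt{d(u)d(v)}$ for all $u,v\in V$. In particular $\|r_v\|^2 = |\cA|^{-1}d(v)$, so the normalized rows $r_v/\sqrt{d(v)}$ all have squared norm $|\cA|^{-1}$ and pairwise inner products also equal to $|\cA|^{-1}$; the equality case of Cauchy--Schwarz then forces them to coincide with a common vector $w$ satisfying $\|w\|^2 = |\cA|^{-1}$. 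Unwinding the normalization gives $C(v,f) = d(v)\sqrt{d(f)}\,w_f$, and imposing the structural column-sum constraint $\sum_v C(v,f) = d(f)$ (each position in the facial walk of $f$ contributes exactly one vertex incidence) pins down $w_f = \sqrt{d(f)}/|\cA|$, yielding (iii). The remaining implications are routine algebraic manipulations using Proposition~\ref{prop:NLM_props} and Corollary~\ref{cor:inci-properties}; the nontrivial ingredient throughout is the combination of the rank-one structure on the right-hand sides of (ii)--(iv) with the combinatorial identity that column sums of $C$ equal face degrees.
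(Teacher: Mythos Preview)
Your argument is correct. The main substantive difference from the paper is in how you close the cycle of implications. The paper proves the chain (i)$\Rightarrow$(ii)$\Rightarrow$(iii)$\Rightarrow$(iv)$\Rightarrow$(i), and for the last step it uses the spectral decomposition of $\cx^{\cA}$ from Theorem~\ref{thm:orth_decomposition}: if $\Chat\Chat^T$ is the rank-one matrix $|\cA|^{-1}D^{1/2}J_{V\times V}D^{1/2}$, then its only eigenvalues are $0$ and $1$, so there are no $\cJ_i$-blocks and hence every eigenvalue of $U$ lies in $\{-1,1\}$, giving $U^2=I$ immediately. You instead prove (iv)$\Rightarrow$(iii) directly, via Cauchy--Schwarz on the rows of $\Chat$ together with the combinatorial column-sum identity $\sum_v C(v,f)=d(f)$. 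Your route is more self-contained (it does not invoke the structure theorem for the eigenspaces of $U$), at the cost of the extra equality-case and normalization bookkeeping; the paper's route is shorter but relies on the earlier spectral machinery. Both the (i)$\Leftrightarrow$(ii) step and the (ii)$\Leftrightarrow$(iii) step in your write-up coincide in substance with the paper's, though you make the reverse directions explicit rather than leaving them to the cycle.
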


\begin{proof}
Note that (iii) and (v) are equivalent because of the combinatorial interpretation of $C$: the $(v,f)$-entry of $C$ is equal to $|\cA|^{-1}d(v)d(f)$. We will now show that (i), (ii), (iii) and (iv) are equivalent.

To show that (i) implies (ii), note that if $U^2 = I$, then $U$ is symmetric. Since we can write
\[
U = 4PQ - 2(P + Q) + I,
\]
this implies that $PQ$ must be symmetric. In particular, since $P$ and $Q$ are orthogonal projections, $PQ$ is itself an orthogonal projection. Since $P,Q$ are symmetric matrices, we see that $PQ = (PQ)^T = QP$, so the image of $PQ$ is $\col(N) \cap \col(M)$, which is equal to $\langle \ones_{\cA} \rangle$ by Theorem \ref{thm:1_eigenspaces}(i). Thus $PQ$ is the orthogonal projection onto $\langle{\ones_{\cA}\rangle}$, that is 
$
PQ = |\cA|^{-1} J_{\cA \times \cA}. $

If (ii) holds, then, as we can write $N = QN$ and $M = PM$, we find that
\[
C = N^TM = N^TQPM = |\cA|^{-1}N^T J_{\cA \times \cA} M = |\cA|^{-1} D J_{V \times F}\Delta,
\]
so we see that (ii) implies (iii).

Now assume that (iii) holds. Then
\[
\Chat = D^{-\frac{1}{2}}C\Delta^{-\frac{1}{2}} = |\cA|^{-1} D^{\frac{1}{2}}J_{V \times F} \Delta^{\frac{1}{2}},
\]
so that
\[
\Chat\Chat^T = |\cA|^{-2}D^{\frac{1}{2}} J_{V \times F} \Delta J_{F \times V} D^{\frac{1}{2}} = |\cA|^{-1} D^{\frac{1}{2}}J_{V\times V} D^{\frac{1}{2}},
\]
where the last equality holds because every entry of $J_{V \times F} \Delta J_{F \times V}$ is equal to the sum of the face-degrees, which is $|\cA|$.

Finally, assume that (iv) is true. Then $\Chat\Chat^T$ is a rank-one matrix that has $1$ as an eigenvalue (with eigenvector $D^{\frac{1}{2}}\ones_{V}$). In particular, $\Chat\Chat^T$ has no eigenvalues besides $0$ and $1$, so by Theorem \ref{thm:orth_decomposition}, the eigenvalues of $U$ are in $\{-1,1\}$. This implies (i).
\end{proof}

The cycle $C_n$ embedded on the sphere is a type $(n,2)$ map with $n$ vertices and $2$ faces. Every vertex is traversed once by the facial walk of either face and we have $U^2 = I$ by part (iv) of the above. In general, we state combinatorial characterisation of when $U^2 = I$ for a map of type $(k,d)$ more simply in the following corollary.  

\begin{corollary}
\label{cor:kd_U^2=I}If $X$ is a type $(k,d)$ map, then $U^2 = I$ if and only if every vertex is traversed by the facial walk of any face $\nicefrac{k}{|V|} = \nicefrac{d}{|F|}$ times.
\end{corollary}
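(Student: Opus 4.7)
The plan is to derive this as a direct specialization of Lemma \ref{lem:U^2=I}(v) to the type $(k,d)$ setting. First I would recall that for a type $(k,d)$ map, every vertex has degree $d(v) = d$ and every face has degree $d(f) = k$, and the total number of arcs is
\[
|\cA| = d\,|V| = k\,|F|.
\]
In particular, dividing through yields $k/|V| = d/|F|$, so the two fractions in the statement are automatically equal and the condition is well-posed.

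Next, applying Lemma \ref{lem:U^2=I}, part (v), $U^2 = I$ is equivalent to saying that every vertex $v$ appears on the facial walk of every face $f$ exactly
\[
|\cA|^{-1} d(v) d(f) = \frac{dk}{|\cA|}
\]
times. Using $|\cA| = d|V|$, this simplifies to $k/|V|$; using $|\cA| = k|F|$ instead, it simplifies to $d/|F|$. Both simplifications give the traversal count in the statement.

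Conversely, if every vertex is traversed exactly $k/|V|$ (equivalently $d/|F|$) times by the facial walk of each face, then for each pair $(v,f)$ the number of incidences equals $dk/|\cA| = |\cA|^{-1}d(v)d(f)$, which is precisely condition (v) of Lemma \ref{lem:U^2=I}, so $U^2 = I$.

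There is no real obstacle here; the corollary is just the type $(k,d)$ rewriting of the general characterization, and the only thing to verify is the arithmetic identity $k/|V| = d/|F|$ coming from the handshake-style count of arcs. I would present the proof in a couple of lines, citing Lemma \ref{lem:U^2=I} and the identity $|\cA| = d|V| = k|F|$.
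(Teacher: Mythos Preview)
Your proposal is correct and matches the paper's approach exactly: the paper's proof is the single line ``This follows directly from part (v) of Lemma \ref{lem:U^2=I},'' and you have simply unpacked that by substituting $d(v)=d$, $d(f)=k$, and $|\cA|=d|V|=k|F|$ into condition (v). There is nothing to add.
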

\begin{proof}
This follows directly from part (v) of Lemma \ref{lem:U^2=I}. 
\end{proof}

The case $U^2 = I$ accounts for many examples of periodicity found amongst rotary maps. See Section \ref{sec:computations} for more details of the computations.  
By Theorem \ref{thm:transitive_pst}(iii), if the automorphism group of a map $X$ acts transitively on the vertex set $V$, then $uv$-PST at time $1$ implies $U^2 = I$. Conversely, and more generally, we might ask when there can be $uv$-PST at time $1$ if $U^2 = I$. It turns out that this property only occurs for  a small family of maps:

\begin{proposition}
\label{prop:PST_time1}
Let $X$ be a map that satisfies $U^2 = I$, and let $u$ and $v$ be distinct vertices of equal degree. Then there is $uv$-PST at time $1$ if and only if $V = \{u,v\}$.
\end{proposition}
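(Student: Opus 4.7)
The plan is to reduce the ``$uv$-PST at time $1$'' condition to a single scalar equation involving vertex degrees, and then exploit the very rigid structure that $U^2 = I$ forces on $\Chat\Chat^T$ via Lemma \ref{lem:U^2=I}.

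First, because $d(u) = d(v)$, Lemma \ref{lem:uv_PST_simplified} says that $uv$-PST at time $1$ is equivalent to $U\Nhat\Ze_u = \Nhat\Ze_v$. Applying Lemma \ref{lem:pst_B_General} (with the role of $V$ played by $\Nhat$), and using that $B_1 = \Nhat^T U \Nhat$ is symmetric, this is in turn equivalent to the single scalar condition $B_1(u,v) = 1$.

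Next, I would simplify $B_1$ directly from the factorization $U = (2P-I)(2Q-I)$. Since $Q\Nhat = \Nhat$, we get $(2Q-I)\Nhat = \Nhat$, so
\[
B_1 = \Nhat^T (2P-I)(2Q-I)\Nhat = \Nhat^T(2P-I)\Nhat = 2\Chat\Chat^T - I.
\]
Now the hypothesis $U^2 = I$ lets me invoke Lemma \ref{lem:U^2=I}(iv), which gives $\Chat\Chat^T = |\cA|^{-1}D^{\frac{1}{2}}J_{V\times V}D^{\frac{1}{2}}$. Reading off the $(u,v)$-entry for $u \ne v$ yields
\[
B_1(u,v) \;=\; \frac{2\sqrt{d(u)d(v)}}{|\cA|}.
\]

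Combining the two steps, $uv$-PST at time $1$ occurs if and only if $2\sqrt{d(u)d(v)} = |\cA|$. Using $d(u) = d(v) = d$ and $|\cA| = \sum_{w \in V} d(w)$, this simplifies to $d(u) + d(v) = \sum_{w \in V} d(w)$. Since the underlying graph of a map is connected, every vertex has positive degree, which forces $V = \{u,v\}$. The converse is immediate: if $V = \{u,v\}$ then $|\cA| = 2d$, so $B_1(u,v) = 1$ and $uv$-PST at time $1$ holds.

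There is no real obstacle here; the content of the proof is the identification $B_1 = 2\Chat\Chat^T - I$, after which Lemma \ref{lem:U^2=I} converts the problem to an elementary identity on degrees that is only compatible with $|V| = 2$.
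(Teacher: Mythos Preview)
Your proof is correct and follows essentially the same approach as the paper: both reduce to $B_1(u,v)=1$ with $B_1 = 2\Chat\Chat^T - I$, invoke Lemma~\ref{lem:U^2=I}(iv) to express $\Chat\Chat^T$ in terms of degrees, and conclude that the resulting scalar identity forces $V=\{u,v\}$. The only cosmetic difference is that the paper packages the entries into a unit vector $\Zx = |\cA|^{-1/2}D^{1/2}\ones_V$ and argues via the norm constraint $\|\Zx\|=1$, whereas you equivalently use the degree-sum identity $|\cA|=\sum_w d(w)$ directly.
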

\begin{proof}
Assume first that $V = \{u,v\}$. Then $X$ is an embedding of a $d$-regular graph, where $d$ is the degree of $u$ and $v$. For such a map, as $U^2 = I$, we find by part (iv) of Lemma \ref{lem:U^2=I} that
\[
\Chat\Chat^T = \frac{d}{|\cA|} J_{V \times V} = \frac{1}{|V|} J_{V \times V}.
\]
In this case, $|V| = 2$, so we obtain
\[
B_1 = 2\Chat\Chat^T -I = J_{V \times V} - I.
\]
Then $B_1(u,v) = 1$, so there is $uv$-PST at time $1$ by Lemma \ref{lem:pst_B_General}. 

For the other implication, note that since $U^2 = I$, we know by part (iv) of Lemma \ref{lem:U^2=I} that
\[
\Chat\Chat^T = \Zx \Zx^T,
\]
where $\Zx = |\cA|^{-\frac{1}{2}} D^{\frac{1}{2}} \ones_V$. In particular, $\Zx_w \neq 0$ for all $w \in V$. If there is $uv$-PST at time $1$, then $B_1(u,v) = 1$ by Lemma \ref{lem:pst_B_General}, so we must have $\Zx_u\Zx_v = \frac{1}{2}$. However, $\Zx$ has norm $1$ with entries between $0$ and $1$, which implies that $\Zx_u\Zx_v = \frac{1}{2}$ if and only if $\Zx_u = \Zx_v = \frac{1}{\sqrt{2}}$ and $\Zx_w = 0$ for $w \neq u,v$. We conclude that $V = \{u,v\}$.
\end{proof}

As mentioned in Section \ref{sec:periodicity}, the result above implies that $K_2$ is the only tree that admits perfect state transfer.  
In the following proposition, we show that for $p > 2$ prime, type $(k,d)$-maps that have incidence multiplicity $\alpha$ can only have the property that $U^p = I$ under a very restricted set of circumstances.

\begin{proposition}\label{prop:typekd-U-p}
Let $X$ be an orientable embedding of type $(k,d)$ with incidence multiplicity $\alpha$ and let $p > 2$ be prime. If $U^p = I$, then exactly one of the following three cases holds:
\begin{enumerate}[(i)]
\item $d = \alpha$ and $U = I$; 
\item $d = 2\alpha$, $|V| = |F| = p$ and $\alpha$ is even; 
\item $d = 3\alpha$, $p = 3$, $|V| = |F| = 9$ and $\alpha$ is divisible by $4$.
\end{enumerate}
\end{proposition}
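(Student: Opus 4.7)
The plan is to combine the trace formula of Lemma \ref{lem:Utrace} with a dimension count over the eigenspaces of $U$ to extract a single Diophantine identity in the parameters $(n, d, \alpha, p)$, where $n := |V|$, and then carry out a short case analysis.

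First, since $p$ is odd and $U^p = I$, Theorem \ref{thm:periodic_implies_id} forces $|V| = |F| = n$, and the identity $|\cA| = |V|d = |F|k$ then gives $k = d$. Thus $|\cA| = nd$ and $|E| = nd/2$, and Euler's formula yields that the $1$-eigenspace of $U$ has dimension $|E| + 2g = nd - 2n + 2$. Because $p$ is odd, $-1$ cannot be an eigenvalue of $U$. The characteristic polynomial of $U$ lies in $\rats[t]$ (since $P$ and $Q$ have rational entries), and since the cyclotomic polynomial $\Phi_p(t)$ is irreducible over $\rats$, all primitive $p$-th roots of unity occur as eigenvalues of $U$ with a common multiplicity $m$; the dimension count $nd = (nd - 2n + 2) + m(p-1)$ then gives $m = 2(n-1)/(p-1)$.

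Next, I would compute $\tr(U)$ in two ways. Lemma \ref{lem:Utrace} yields $\tr(U) = 4\alpha n / d - 4n + nd$; summing eigenvalues (using $\sum_{k=1}^{p-1} \zeta^k = -1$) gives $\tr(U) = nd - 2n + 2 - m$. Setting $\beta := d/\alpha$---a positive integer, since each facial walk visits $k/\alpha = d/\alpha$ distinct vertices---and eliminating $m$ between these two expressions produces, after straightforward algebra,
\begin{equation*}
n\bigl(2(p-1) - (p-2)\beta\bigr) = p\beta. \tag{$\star$}
\end{equation*}
Since the right-hand side is positive, $\beta < 2(p-1)/(p-2)$, which forces $\beta \in \{1, 2\}$ for $p \geq 5$ and $\beta \in \{1, 2, 3\}$ for $p = 3$. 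Substituting each admissible value of $\beta$ into ($\star$) determines $n$ uniquely: $\beta = 1$ gives $n = 1$, hence $|V| = |F| = 1$, which makes $X$ a quasi-tree bouquet and forces $U = I$ by Lemma \ref{lem:U=I} (case (i)); $\beta = 2$ gives $d = 2\alpha$ and $n = p$ (case (ii)); and $\beta = 3$ forces $p = 3$, $n = 9$, $d = 3\alpha$ (case (iii)). The three cases are mutually exclusive because $n$ takes the distinct values $1$, $p$, $9$ (using that $p$ is prime, so $p \neq 9$).

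Finally, the parity and divisibility constraints on $\alpha$ come from requiring the genus $g = (nd - 4n + 4)/4$ to be a nonnegative integer. In case (ii) this becomes $g = (p\alpha - 2p + 2)/2$, so $p\alpha$ must be even, forcing $\alpha$ even since $p$ is odd. In case (iii) it becomes $g = (27\alpha - 32)/4$, which requires $3\alpha \equiv 0 \pmod{4}$, i.e.\ $4 \mid \alpha$. The only delicate step is the Galois/cyclotomic argument pinning down the common multiplicity $m$---this is what collapses the eigenvalue accounting to a single parameter and allows ($\star$) to appear; everything else is bookkeeping with the trace formula, Euler's formula, and Lemma \ref{lem:U=I}.
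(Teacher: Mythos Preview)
Your proof is correct and follows essentially the same approach as the paper: both combine the trace formula of Lemma~\ref{lem:Utrace} with the eigenvalue-multiplicity count coming from the cyclotomic/Galois argument and Euler's formula, then restrict $d/\alpha$ to $\{1,2,3\}$ and read off the parity constraints. The only cosmetic differences are that you package the algebra into the single Diophantine identity~($\star$) and bound $\beta$ via positivity of its right-hand side (whereas the paper first bounds $m \le n-1$ to obtain $d < 4\alpha$), you invoke Theorem~\ref{thm:periodic_implies_id} rather than Corollary~\ref{cor:U_-1_not_eig} for $|V|=|F|$, and you phrase the divisibility constraints via integrality of $g$ rather than evenness of $|E|$.
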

\begin{proof}
Throughout this proof, we will write $n = |V|$, $\ell = |E|$ and $s = |F|$. If $U^p = I$ for some odd prime $p$, then $-1$ is not an eigenvalue of $U$, hence we know by Corollary \ref{cor:U_-1_not_eig} that $n = s$. This also implies that $k = d$, as $nd = sk = |\cA|$. Moreover, by Euler's formula,
\[
n + s - \ell = 2 - 2g.
\]
Since $n = s$, it must be that $\ell$ is even.

The eigenvalues of $U$ that are unequal to $1$ are primitive $p$-th roots of unity. Since $U$ is a rational matrix, any eigenvalue must occur with the same multiplicity as each of its algebraic conjugates. Thus all eigenvalues of $U$ not equal to $1$ have the same multiplicity, say $m \geq 0$. The multiplicities of the eigenvalues add up to the dimension of the whole space, so
\[
|\cA| = m_1 + (p-1)m = \ell + 2g + (p-1)m,
\]
where $m_1 = \ell + 2g$ is the dimension of the $1$-eigenspace of $U$. Since $|\cA| = 2\ell$, we find (using Euler's formula) that
\begin{equation}
\label{eq:m(p-1)}
m(p-1) = \ell - 2g = n + s - 2 = 2(n - 1).
\end{equation}
Let $\theta_1,\ldots, \theta_d$ be the distinct eigenvalues of $U$, with corresponding multiplicities $m_1,\ldots, m_d$. We may assume that $\theta_1 = 1$, in which case $m_2= \ldots = m_d = m$.
The trace of $U$ equals the sum of its eigenvalues, so we can write
\begin{equation*}
\nonumber \tr(U) = m_1  + \sum_{i= 2}^d m_i  \theta_i 
\nonumber = \ell + 2g + m \sum_{i= 2}^d \theta_i  
= \ell + 2g - m
\end{equation*}
since the set of all (non-trivial) primitive $p$-th roots of unity sum  to $-1$. On the other hand, by Lemma \ref{lem:Utrace}, and the fact that $n = s$ and $k = d$, we can write
\[
\tr(U) = \frac{4\alpha n}{d} -4n + |\cA|
\]
for the trace of $U$. From these two expressions for $\tr(U)$, and Euler's formula, we obtain that
\[
m = (\ell + 2g) - \left( \frac{4\alpha n}{d} -4n +2\ell \right) = 2n + 2 - \frac{4\alpha n}{d}.
\]
We observe from \eqref{eq:m(p-1)} that $m$ divides $n-1$, so $m$ is at most $n-1$ and we may rearrange to obtain 
$3 \leq \nicefrac{4\alpha n}{d} - n$,
and thus $d < 4\alpha$. As $d$ is a multiple of $\alpha$, that leaves three possible values for $d$: $d = \alpha$ , $d = 2\alpha$ or $d = 3\alpha$. If $d = \alpha$, then $k = \alpha$. Every vertex is only incident to one face, and every face is only incident to one vertex. This implies that $n = s = 1$, in which case $U = I$ by Lemma \ref{lem:U=I}. This is case (i). If $d > \alpha$, then the map has more than one vertex and one face, so $U \neq I$ and hence $m > 0 $. For $d = 2\alpha$, by \eqref{eq:m(p-1)} and our expression for $m$:
\[
p - 1 = \frac{2n -2}{2n + 2 - \frac{4\alpha n}{d}} = \frac{2n-2}{2} = n - 1,
\]
so $n = s = p$. As stated above, $\ell$ is even. In this case, we have $\ell = p\alpha$, so $\alpha$ must be even. This is case (ii).

Similarly, if $d = 3\alpha$, we find that
\[
p-1 = \frac{2n - 2}{2n + 2 -\frac{4n}{d}} = \frac{n - 1}{1 + \frac{n}{3}} = \frac{3n - 3}{n + 3} < 3.
\]
Since $p > 2$, it must be that $p = 3$. Solving for $n$ then gives $n = s = 9$. But now $2\ell = nd = 27\alpha$, so for $\ell$ to be even, we need that $\alpha = 0 \mod 4$. This is case (iii).
\end{proof}

We note that this implies that maps of type $(k,d)$ with an odd incidence multiplicity $\alpha$ (in particular circular embeddings), yield $U^p \neq I$ for all primes $p > 2$. Note that this includes all toroidal $(m,n)$-grids with $m,n \geq 2$, which we will study more carefully in Section \ref{sec:grids}. 

In Proposition \ref{prop:typekd-U-p}, the maps that satisfy case (i) are precisely all quasi-tree bouquets. For any $p>2$, an example of case (ii) is the toroidal $(1,p)$-grid, which is discussed in Section \ref{sec:grids}. We do not know of any example of case (iii).

\begin{lemma}
\label{lem:CCT_rational}
Let $X$ be a map for which the matrix $\Chat\Chat^T$ has rational eigenvalues. Assume that $U^\tau = I$ for some $\tau > 1$ and $U^s \neq I$ for all $s < \tau$, then $\tau \in \{2,3,4,6,12\}$.
\end{lemma}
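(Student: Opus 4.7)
The plan is to exploit the quadratic structure of the minimal polynomial of $U$ on each $\cJ_i$ from Theorem \ref{thm:orth_decomposition}, combined with a classical theorem of Niven on rational values of the cosine at rational multiples of $\pi$. Since $U^\tau = I$, every eigenvalue of $U$ is a $\tau$-th root of unity, hence of the form $e^{i\theta}$ with $\theta/\pi \in \rats$. On each summand $\cJ_i$ with $\hat{\lambda}_i \notin \{0,1\}$, the minimal polynomial of $U$ is $t^2 - (4\hat{\lambda}_i - 2)t + 1$, so the two eigenvalues of $U$ on $\cJ_i$ are complex conjugate numbers $e^{\pm i\theta_i}$ on the unit circle satisfying $2\cos\theta_i = 4\hat{\lambda}_i - 2$. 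The hypothesis that $\Chat\Chat^T$ has rational eigenvalues then forces $\cos\theta_i \in \rats$ for each such $i$.

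The key step is to invoke Niven's theorem: the only rational values attained by $\cos\theta$ when $\theta$ is a rational multiple of $\pi$ are $0, \pm\tfrac{1}{2}, \pm 1$. Applied to each $\theta_i$, this yields $\cos\theta_i \in \{0, \pm\tfrac{1}{2}, \pm 1\}$, i.e.\ $\hat{\lambda}_i \in \{0, \tfrac{1}{4}, \tfrac{1}{2}, \tfrac{3}{4}, 1\}$. Since $\hat{\lambda}_i \notin \{0,1\}$ by the conditions of the decomposition, only $\hat{\lambda}_i \in \{\tfrac{1}{4}, \tfrac{1}{2}, \tfrac{3}{4}\}$ arise, and these force $e^{i\theta_i}$ to be a primitive $3$rd, $4$th, or $6$th root of unity, respectively.

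Combined with the eigenvalues $\pm 1$ of $U$ coming from $\cK$ and $\cW$ (which have multiplicative orders $1$ and $2$), every eigenvalue of $U$ has multiplicative order in $\{1,2,3,4,6\}$. Since $\tau$ is the smallest positive integer with $U^\tau = I$, it is the least common multiple of the orders of the eigenvalues of $U$, so $\tau$ divides $\mathrm{lcm}(1,2,3,4,6) = 12$. Listing the divisors of $12$ that are attainable as an lcm of a subset of $\{1,2,3,4,6\}$ gives $\tau \in \{1,2,3,4,6,12\}$, and the assumption $\tau > 1$ discards the case $\tau = 1$.

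The main (only) non-trivial ingredient is Niven's theorem; the rest is a routine consequence of Theorem \ref{thm:orth_decomposition} together with the rationality hypothesis. No existence of maps actually realising each of the five periods is claimed here — this is purely an upper-bound-on-the-possible-orders argument, so there is no case analysis or construction to carry out.
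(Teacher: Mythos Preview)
Your proof is correct and follows essentially the same route as the paper's: both arguments use Theorem~\ref{thm:orth_decomposition} to see that every eigenvalue of $U$ other than $\pm 1$ is a root of a monic quadratic $t^2 - (4\hat\lambda - 2)t + 1$ with rational coefficients, and then conclude that such a root of unity must be a primitive $3$rd, $4$th, or $6$th root of unity. The only difference is cosmetic: the paper phrases the number-theoretic step as ``the minimal polynomial of a primitive $r$-th root of unity over $\rats$ is the $r$-th cyclotomic polynomial, which has degree $\varphi(r)$, so $\varphi(r)=2$ forces $r\in\{3,4,6\}$'', whereas you invoke Niven's theorem on rational values of $\cos\theta$ at rational multiples of $\pi$. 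These two statements are equivalent (Niven's theorem is usually proved via exactly this cyclotomic-degree observation), so neither approach is more general or more elementary than the other.
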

\begin{proof}
Let $\xi \neq 1$ be an eigenvalue of $U$. Then $\xi$ is a primitive $r$-th root of unity for some $r$ that divides $\tau$. If also $\xi \neq -1$ (meaning that $r>2$), then as $\xi$ is an eigenvalue of $U$, it is a root of
\[
p(t) = t^2 - (4\hat{\lambda} -2)t + 1
\]
for some eigenvalue $\hat{\lambda}$ of $\Chat\Chat^T$ by Theorem \ref{thm:orth_decomposition}. By assumption this eigenvalue is rational, so $p(t)$ has rational coefficients. Then $p(t)$ must be the minimal polynomial of $\xi$ over $\rats$. In particular, because $\xi$ is a primitive root of unity, $p(t)$ is the $r$-th cyclotomic polynomial. It has degree $2$, which implies that $r \in \{3,4,6\}$. Because $\tau$ is minimal, it is the least common multiple of some non-empty subset of $\{2,3,4,6\}$, meaning that $\tau \in \{2,3,4,6,12\}$.
\end{proof}

We note that the converse partly holds; if $U^\tau = I$ for some $\tau \in \{2,3,4,6\}$ and $U^s \neq I$ for all $s < \tau$, then $\Chat\Chat^T$ has rational eigenvalues. The cyclotomic polynomial for the $2,3,4,6$th roots of unity have degrees $1$ or $2$; any minimal polynomial for these roots over an extension field of the rationals will divide the cyclotomic polynomials. Since the cyclotomic polynomials are only degree $1$ or $2$, we can say exactly what the eigenvalues of $\Chat\Chat^T$ have to be. Let $\xi$ be an eigenvalue of $U$ which is a root of 
\[
p(t) = t^2 - (4\hat{\lambda} -2)t + 1
\]
for some eigenvalue $\hat{\lambda}$ of $\Chat\Chat^T$. If $\xi$ is a third root of unity, then $p(t)$ must divide $t^2 + t + 1$ and thus $\hat{\lambda} = \frac{1}{4}$. Similarly, if $\xi$ is a fourth root of unity, then $p(t)$ must divide $t^2 +1$ and thus $\hat{\lambda} = \frac{1}{2}$. If $\xi$ is a sixth root of unity, then $p(t)$ must divide $t^2 - t + 1$ and $\hat{\lambda} = \frac{3}{4}$. Thus if every eigenvalue of $U$ is $\pm 1$ or a $3$rd, $4$th or $6$th root of unity, then $\Chat\Chat^T$ has only rational eigenvalues. 

In the census of regular and chiral maps with at most $1000$ edges (up to duality and also up to `mirror-duality' for the chiral maps), we found that many have the property that $CC^T$ has all integer eigenvalues. These maps are type $(k,d)$-maps, so $\Chat\Chat^T = \frac{1}{kd}CC^T$ has all rational eigenvalues. We also computed for each map, whether or not there exists some minimal $ r \in \{1,2,\ldots,500\}$ such that $U^r = I$; only the values $r=2,6,12$ appeared in these computations. To give an idea of how common these properties are for regular and chiral maps, we summarize these computations in Table \ref{tab:inteigs}. 

\begin{table}[htbp]
\centering
\begin{tabular}{cccccc}
\hline
\multirow{2}{*}{edges} & \multirow{2}{*}{maps} & \multirow{2}{*}{$\sigma(CC^T)\subset \ints$} & \multicolumn{3}{c}{periodicity} \\
& & & $U^2=I$ & $U^6=I$ & $U^{12}=I$ \\
\hline
     regular & 19685 & 16892 & 8816 & 1439 & 550 \\
     chiral & 4516 & 1884 & 314 & 105 & 12
\end{tabular}
\caption{Statistics on integer eigenvalues and powers of $U$ equalling the identity for regular and chiral maps on up to $1000$ edges. \label{tab:inteigs}}
\end{table}

\section{Infinite families of examples}\label{sec:infexamples}

In this section, we give several infinite families of maps which exhibit perfect state transfer and periodicity. In Section \ref{sec:1face2vxs}, we give an infinite family of maps with two vertices which admit perfect state transfer at time $1$. In Section \ref{sec:grids}, we give two infinite families of grids which admit perfect state transfer and also a variant family which admits periodicity at every vertex at some time $s$, but where $U^{s} \neq I$. The last family shows, in some sense, that the statement of Theorem \ref{thm:periodic_implies_id} is best possible.

\subsection{Dipoles with one or two faces}\label{sec:1face2vxs}

A \textsl{dipole} is a graph with two vertices and no loops. Dipoles encode information in many ways and have been studied in various contexts, see \cite{EllEll2022}. Here we study a  specific rotation for dipoles, which results in  embedding with either one face or two face, depending on the parity of the number of edges. Perfect state transfer occurs at time one in these embeddings of dipoles; they are maps whose automorphism group acts transitively on the vertex set and thus illustrate case (iii) of Theorem \ref{thm:transitive_pst}. 

For $n \in \ints_{> 0}$, consider the map $X_n$ with two vertices $u$ and $v$, with edges $e_{1},\ldots,e_{n}$, and rotation system
\[
    u : (e_1,e_2,\ldots,e_{n}), \qquad
    v : (e_1,e_2,\ldots,e_{n}).
\]
That is, every edge is incident to both vertices and the edges appear in the same order around each vertex. Figure \ref{fig:2vx_2torus} depicts $X_5$ and $X_6$. Let the arcs of the map be given by $a_i$ and $b_i$ for $i=1,\ldots,n$, such that each edge $e_i$ is incident to the pair of arcs $(a_{i},b_{i})$, with $a_i$ and $b_i$ having $u$ and $v$ as their respective tails. In other words, we can write
\[
    u : (a_{1},a_{2},\ldots,a_{n}), \qquad
    v : (b_{1},b_{2},\ldots,b_{n})
\]
for the rotation system with respect to the arcs of the map. If $n$ is odd, the map has a single facial walk given by sequence of arcs
\[
(a_n,b_{n-1},a_{n-2},b_{n-3},\ldots,a_1,b_n,a_{n-1},b_{n-2},a_{n-3},\ldots,b_1)
\]

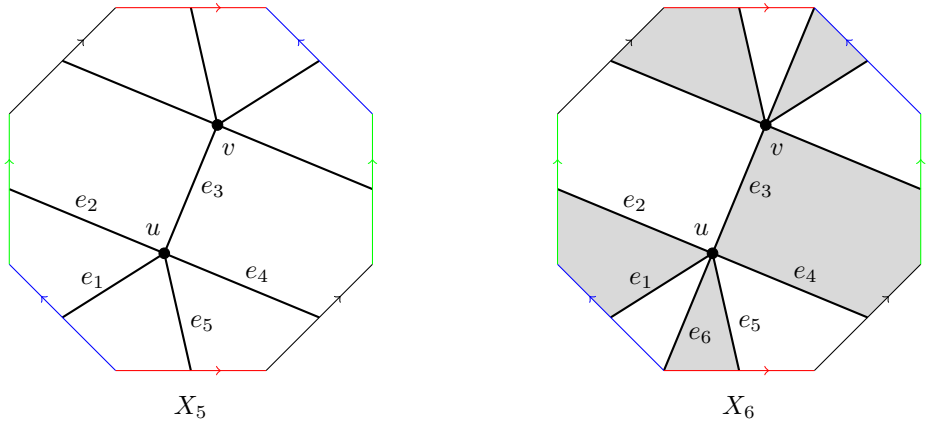
\begin{figure}[htbp]
    \centering
    \begin{tikzpicture}
    	\begin{pgfonlayer}{nodelayer}
    		\node (0) at (-1, 2.4142) {};
    		\node (1) at (1, 2.4142) {};
    		\node (2) at (-2.4142, 1) {};
    		\node (3) at (2.4142, 1) {};
    		\node (4) at (-2.4142, -1) {};
    		\node (5) at (2.4142, -1) {};
    		\node (6) at (-1, -2.4142) {};
    		\node (7) at (1, -2.4142) {};
    		\node [label={[shift={(-.15,0)}]above:$u$}] (8) at (-0.3536, -0.8536) {};
    		\node [label={[shift={(.15,0)}]below:$v$}] (9) at (0.3536, 0.8536) {};
    		\node (10) at (0, 2.4142) {};
    		\node (11) at (0, -2.4142) {};
    		\node (12) at (-2.4142, 0) {};
    		\node (13) at (2.4142, 0) {};
    		\node (14) at (-1.7071, 1.7071) {};
    		\node (15) at (1.7071, 1.7071) {};
    		\node (16) at (-1.7071, -1.7071) {};
    		\node (17) at (1.7071, -1.7071) {};
    		\node [label=below:$X_5$] (18) at (0, -2.5) {};
    	\end{pgfonlayer}
    	\begin{pgfonlayer}{edgelayer}
    	    \draw [thick] (8.center) to node [pos=.7, above] {$e_1$} (16.center);
    		\draw [thick] (8.center) to node [pos=.5, above] {$e_2$} (12.center);
    		\draw [thick] (9.center) to node [pos=.5, right] {$e_3$} (8.center);
    		\draw [thick] (8.center) to node [pos=.6, above] {$e_4$} (17.center);
    		\draw [thick] (8.center) to node [pos=.6, right] {$e_5$} (11.center);
            \draw [thick] (9.center) to node [pos=.7, below] {} (15.center);
    		\draw [thick] (9.center) to node [pos=.6, above] {} (13.center);
    		\draw [thick] (9.center) to node [pos=.6, above] {} (14.center);
    		\draw [thick] (9.center) to node [pos=.6, left] {} (10.center);
    		\draw [thin, style=identify1, color=black] (7.center) to (5.center);
    		\draw [thin, style=identify1, color=black] (2.center) to (0.center);
    		\draw [thin, style=identify1, color=blue] (6.center) to (4.center);
    		\draw [thin, style=identify1, color=blue] (3.center) to (1.center);
    		\draw [thin, style=identify1, color=red] (0.center) to (1.center);
    		\draw [thin, style=identify1, color=red] (6.center) to (7.center);
    		\draw [thin, style=identify1, color=green] (4.center) to (2.center);
    		\draw [thin, style=identify1, color=green] (5.center) to (3.center);
    	\end{pgfonlayer}
    	\filldraw [black]
    	    (8) circle (2pt)
    	    (9) circle (2pt);
    \end{tikzpicture}
    \qquad\qquad\qquad
    \begin{tikzpicture}
    	\begin{pgfonlayer}{nodelayer}
    		\node (0) at (-1, 2.4142) {};
    		\node (1) at (1, 2.4142) {};
    		\node (2) at (-2.4142, 1) {};
    		\node (3) at (2.4142, 1) {};
    		\node (4) at (-2.4142, -1) {};
    		\node (5) at (2.4142, -1) {};
    		\node (6) at (-1, -2.4142) {};
    		\node (7) at (1, -2.4142) {};
    		\node [label={[shift={(-.15,0)}]above:$u$}] (8) at (-0.3536, -0.8536) {};
    		\node [label={[shift={(.15,0)}]below:$v$}] (9) at (0.3536, 0.8536) {};
    		\node (10) at (0, 2.4142) {};
    		\node (11) at (0, -2.4142) {};
    		\node (12) at (-2.4142, 0) {};
    		\node (13) at (2.4142, 0) {};
    		\node (14) at (-1.7071, 1.7071) {};
    		\node (15) at (1.7071, 1.7071) {};
    		\node (16) at (-1.7071, -1.7071) {};
    		\node (17) at (1.7071, -1.7071) {};
    		\node [label=below:$X_6$] (18) at (0, -2.5) {};
    	\end{pgfonlayer}
    	\begin{pgfonlayer}{edgelayer}
    	    \draw [thick] (8.center) to node [pos=.7, above] {$e_1$} (16.center);
    		\draw [thick] (8.center) to node [pos=.5, above] {$e_2$} (12.center);
    		\draw [thick] (9.center) to node [pos=.5, right] {$e_3$} (8.center);
    		\draw [thick] (8.center) to node [pos=.6, above] {$e_4$} (17.center);
    		\draw [thick] (8.center) to node [pos=.6, right] {$e_5$} (11.center);
    		\draw [thick] (8.center) to node [pos=.7, right] {$e_6$} (6.center);
            \draw [thick] (9.center) to node [pos=.7, below] {} (15.center);
    		\draw [thick] (9.center) to node [pos=.6, above] {} (13.center);
    		\draw [thick] (9.center) to node [pos=.6, above] {} (14.center);
    		\draw [thick] (9.center) to node [pos=.6, left] {} (10.center);
    		\draw [thick] (9.center) to node [pos=.5, right] {} (1.center);
    		\draw [thin, style=identify1, color=black] (7.center) to (5.center);
    		\draw [thin, style=identify1, color=black] (2.center) to (0.center);
    		\draw [thin, style=identify1, color=blue] (6.center) to (4.center);
    		\draw [thin, style=identify1, color=blue] (3.center) to (1.center);
    		\draw [thin, style=identify1, color=red] (0.center) to (1.center);
    		\draw [thin, style=identify1, color=red] (6.center) to (7.center);
    		\draw [thin, style=identify1, color=green] (4.center) to (2.center);
    		\draw [thin, style=identify1, color=green] (5.center) to (3.center);
    	\end{pgfonlayer}
    	\filldraw [black]
    	    (8) circle (2pt)
    	    (9) circle (2pt);
    	\begin{pgfonlayer}{background}
    	    \fill[fill=lightgray!60] (8.center) to (6.center) to (11.center) to (8.center);
    	    \fill[fill=lightgray!60] (8.center) to (9.center) to (13.center) to (5.center) to (17.center) to (8.center);
    	    \fill[fill=lightgray!60] (9.center) to (1.center) to (15.center) to (9.center);
    	    \fill[fill=lightgray!60] (8.center) to (12.center) to (4.center) to (16.center) to (8.center);
    	    \fill[fill=lightgray!60] (9.center) to (10.center) to (0.center) to (14.center) to (9.center);
    	\end{pgfonlayer}
    \end{tikzpicture}
    \caption{The maps $X_5$ and $X_6$ on the double torus. We can obtain $X_6$ from $X_5$ by adding an edge $e_6$ that passes through the identified corners of the octagon.\label{fig:2vx_2torus}}
\end{figure}

If $n$ is even on the other hand, the map has two facial walks given by the sequences
\[
(a_n,b_{n-1},a_{n-2},b_{n-3},\ldots,b_1) \qquad \text{and} \qquad (b_n,a_{n-1},b_{n-2},a_{n-3},\ldots,a_1).
\]
Thus the genus of the map is
\[
g = \begin{cases}
\frac{n-1}{2}, & \text{if $n$ is odd;} \\
\frac{n-2}{2}, & \text{if $n$ is even.}
\end{cases}
\]

\begin{theorem}\label{thm:example-pst-time1-2vx}
There is $uv$-PST occurring at time $1$ in $X_n$, for all $n\geq 2$.
\end{theorem}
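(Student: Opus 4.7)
The plan is to invoke Lemma \ref{lem:uv_PST_simplified} to reduce $uv$-PST at time $1$ to the identity
\[
U \Nhat \Ze_u = \Nhat \Ze_v,
\]
which is valid since both $u$ and $v$ have degree $n$. Because $\Nhat \Ze_u \in \col(\Nhat)$, the reflection $2Q-I$ acts as the identity on it, so the problem reduces to showing $(2P-I)\Nhat\Ze_u = \Nhat\Ze_v$. Rearranging, I would like to establish
\[
2 P \Nhat \Ze_u = \Nhat\Ze_u + \Nhat\Ze_v.
\]
Since every arc has exactly one of $u,v$ as its tail, the right-hand side is simply $\tfrac{1}{\sqrt{n}}\ones_{\cA}$.

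Next, I would exploit that $\ones_{\cA} = M\ones_F$ lies in $\col(M)$, so $P\ones_{\cA} = \ones_{\cA}$. Applying $P$ to $\Nhat\Ze_u+\Nhat\Ze_v = \tfrac{1}{\sqrt{n}}\ones_{\cA}$ gives $P\Nhat\Ze_u + P\Nhat\Ze_v = \tfrac{1}{\sqrt{n}}\ones_{\cA}$. It therefore suffices to show that $P\Nhat\Ze_u = P\Nhat\Ze_v$, from which each will equal $\tfrac{1}{2\sqrt{n}}\ones_{\cA}$, finishing the argument. Since $\Mhat$ has orthonormal columns it is injective, so $P\Nhat(\Ze_u-\Ze_v)=0$ is equivalent to $\Chat^T(\Ze_u-\Ze_v)=0$; equivalently (using $d(u)=d(v)=n$), that the $u$- and $v$-rows of the vertex-face incidence matrix $C$ coincide.

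The last step is the combinatorial check that $C(u,f) = C(v,f)$ for every face $f$ of $X_n$. Here I would use that every arc of $X_n$ has one endpoint at $u$ and the other at $v$, so consecutive tails along any facial walk alternate between $u$ and $v$; in particular every face of $X_n$ has even degree. Reading off the facial walks given in the excerpt: for odd $n$ there is a single face of degree $2n$, which meets $u$ and $v$ each exactly $n$ times; for even $n$ each of the two faces has degree $n$, meeting each of $u$ and $v$ exactly $n/2$ times. In either case the two rows of $C$ are equal, and the reduction above closes.

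The only real content is the combinatorial alternation of tails along the facial walks, which is the main (though easy) obstacle; everything else is an operator-theoretic rewriting. No case distinction on parity is needed beyond this last verification, and both cases collapse to the uniform conclusion $P\Nhat\Ze_u = \tfrac{1}{2\sqrt{n}}\ones_{\cA}$, from which $U\Nhat\Ze_u = \Nhat\Ze_v$ follows immediately.
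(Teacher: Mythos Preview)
Your proof is correct. Both your argument and the paper's rest on the same combinatorial fact: in the dipole $X_n$, each face meets $u$ and $v$ the same number of times, i.e.\ the two rows of $C$ coincide. The paper verifies this by writing down $C$ explicitly from the listed facial walks (separately for $n$ odd and $n$ even), then computes $B_1 = \Nhat^T U\Nhat = 2\Chat\Chat^T - I = J_2 - I_2$ and invokes Lemma~\ref{lem:pst_B_General}. You instead deduce the equality of rows from the alternation of tails along any facial walk---an argument that works for any orientable dipole embedding, not just this rotation system---and then finish with a direct operator identity $U\Nhat\Ze_u = (2P-I)\Nhat\Ze_u = \Nhat\Ze_v$, bypassing the $B_t$ machinery entirely. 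Your route is slightly more conceptual and makes the parity split unnecessary (the alternation already forces even face-degree and equal incidence); the paper's is more hands-on. The underlying content is the same.
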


\begin{proof} We can compute that vertex-face incidence matrix $C$ satisfies
\[
C = \begin{bmatrix} n \\ n \end{bmatrix} \quad \text{ and } \quad C = \frac{1}{2}\begin{bmatrix} n & n \\ n & n\end{bmatrix},
\]
when $n$ is odd and even, respectively. In either case, we obtain that
 $\Nhat^T U \Nhat = J_2 - I_2$ and the result follows.
 \end{proof}

\subsection{Toroidal grids}
\label{sec:grids}

In \cite{PatRagRun2005,Falk2013,AmbPorNah2015}, quantum search is studied  on the toriodal grid. Zhan generalized the unitary operator without the query matrix tothe vertex-face walk, in the two reflections model and details the connection with these search algorithms in \cite[Section 8]{Zha2020}. In this section, we find perfect state transfer and periodicity in some toroidal grids. 

\begin{figure}[htb]
\centering
\begin{subfigure}{0.45\textwidth}
    \centering
    \includegraphics[width=\textwidth]{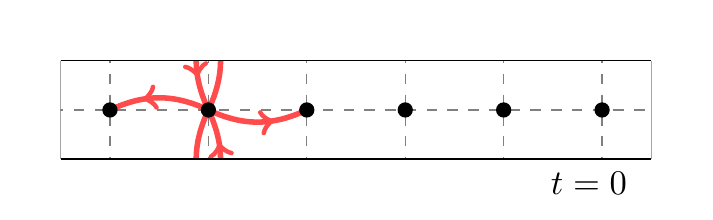}
\end{subfigure}
\begin{subfigure}{0.45\textwidth}
    \centering
    \includegraphics[width=\textwidth]{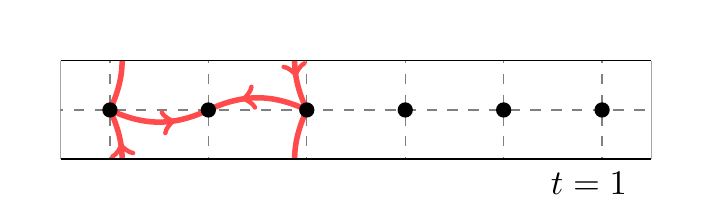}
\end{subfigure}

\vspace{-15pt}

\begin{subfigure}{0.45\textwidth}
    \centering
    \includegraphics[width=\textwidth]{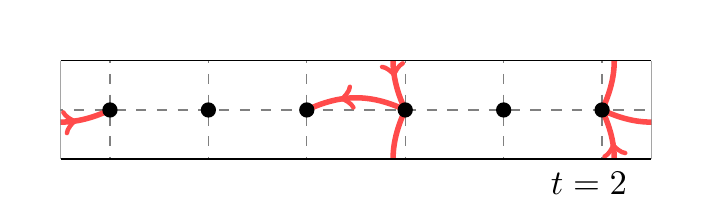}
\end{subfigure}
\begin{subfigure}{0.45\textwidth}
    \centering
    \includegraphics[width=\textwidth]{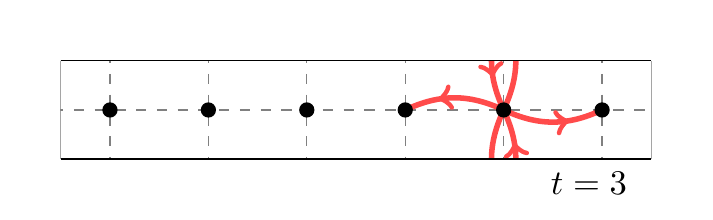}
\end{subfigure}

\vspace{-15pt}

\begin{subfigure}{0.45\textwidth}
    \centering
    \includegraphics[width=\textwidth]{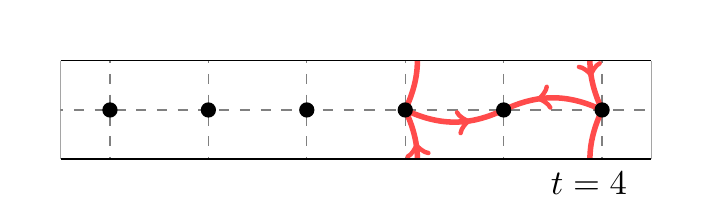}
\end{subfigure}
\begin{subfigure}{0.45\textwidth}
    \centering
    \includegraphics[width=\textwidth]{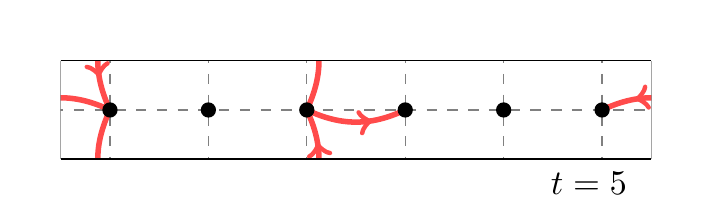}
\end{subfigure}

\vspace{-15pt}

\begin{subfigure}{0.45\textwidth}
    \centering
    \includegraphics[width=\textwidth]{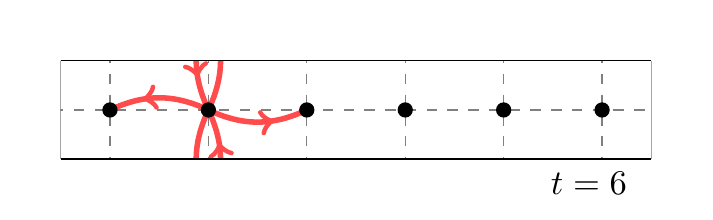}
\end{subfigure}
\begin{subfigure}{0.45\textwidth}
    \centering
    \includegraphics[width=\textwidth]{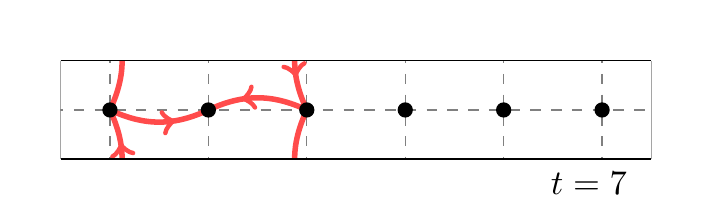}
\end{subfigure}

\caption{Evolution of the walk on the $(1,6)$-grid, with perfect state transfer at time $3$ and periodicity at time $6$. The graphs are drawn on the cut-open torus, where the opposite sides are identified; for visual simplicity, we have omitted the labels on the boundary of the torus. At every step, the arcs with a non-zero amplitude are shown.\label{fig:1_6grid}}
\end{figure}

Figure \ref{fig:1_6grid} shows the evolution of the walk on the $1\times 6$ grid embedded on the torus, starting at the uniform superposition of a vertex. As is suggested by the figure, perfect state transfer occurs at time $t=3$. We will now proceed by giving a rigourous definition of the toroidal grids before proving our main theorems establishing the perfect state transfers in the toroidal $(1,m)$-grids for $m$ even and in the $(2,m)$-grids for $m$ odd. We also alter the $(1,m)$-grids to give an infinite family of examples which admit periodicity at every vertex at time time $s$ but where $U^s \neq 0$. 

Now we proceed by giving a rigourous definition of the toroidal grids, with their rotational systems. The \textsl{toroidal $(n,m)$-grid} has vertex set $V = \ints_n \times \ints_m$ and edge set 
\[
E = \{v_R,v_D \mid v \in V\},
\]
such that for all $(a,b) \in V$:
\begin{itemize}
    \item $(a,b)_R$ is incident with $(a,b+1)$ and $(a,b)$;
    \item $(a,b)_D$ is incident with $(a+1,b)$ and $(a,b)$.
\end{itemize}

For the rotation system, the edges incident with the vertex $(a,b) \in V$ are ordered as follows:
\[
(a,b)_R,\, (a,b)_D,\, (a,b-1)_R,\, (a-1,B)_D.
\]
The corresponding map has genus 1. For example, in Figure \ref{fig:2_3grid}, the top left vertex is vertex $(0,0)$ and  is incident to $(0,0)_R$ (the edge to its right), to $(0,0)_D$ (the edge downwards from $(0,0$), as well as $(1,0)_D$ and $(0,2)_R$. 

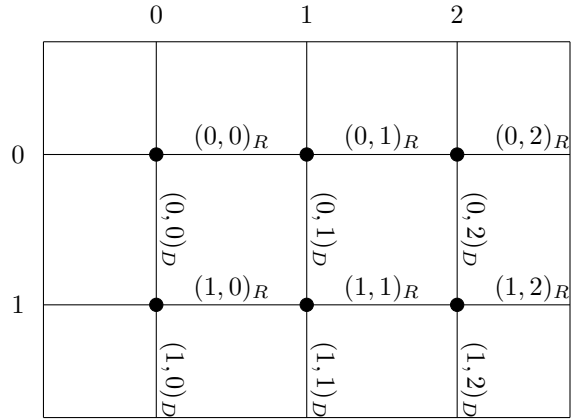
\begin{figure}[hb]
    \centering
    \begin{tikzpicture}
    	\begin{pgfonlayer}{nodelayer}
        	\node [label={[shift={(0,.225)}]center:$(0,0)_R$}] (0r) at (-1, 1) {};
    		\node [label={[shift={(0,.225)}]center:$(1,0)_R$}] (1r) at (-1, -1) {};
    		\node [label={[shift={(0,.225)}]center:$(0,1)_R$}] (2r) at (1, 1) {};
    		\node [label={[shift={(0,.225)}]center:$(1,1)_R$}] (3r) at (1, -1) {};
    		\node [label={[shift={(0,.225)}]center:$(1,2)_R$}] (4r) at (3, -1) {};
    		\node [label={[shift={(0,.225)}]center:$(0,2)_R$}] (5r) at (3, 1) {};
    		\node [label={[shift={(0.225,0)}, rotate=-90]center:$(0,0)_D$}] (0d) at (-2, 0) {};
    		\node [label={[shift={(0.225,0)}, rotate=-90]center:$(1,0)_D$}] (1d) at (-2, -2) {};
    		\node [label={[shift={(0.225,0)}, rotate=-90]center:$(0,1)_D$}] (2d) at (0, 0) {};
    		\node [label={[shift={(0.225,0)}, rotate=-90]center:$(1,1)_D$}] (3d) at (0, -2) {};
    		\node [label={[shift={(0.225,0)}, rotate=-90]center:$(1,2)_D$}] (4d) at (2, -2) {};
    		\node [label={[shift={(0.225,0)}, rotate=-90]center:$(0,2)_D$}] (5d) at (2, 0) {};
    		\node (0) at (-2, 1) {};
    		\node (1) at (-2, -1) {};
    		\node (2) at (0, 1) {};
    		\node (3) at (0, -1) {};
    		\node (4) at (2, -1) {};
    		\node (5) at (2, 1) {};
    		\node (6) at (-3.5, 2.5) {};
    		\node (7) at (-3.5, -2.5) {};
    		\node (8) at (3.5, -2.5) {};
    		\node (9) at (3.5, 2.5) {};
    		\node [label=above:{$0$}] (10) at (-2, 2.5) {};
    		\node [label=above:{$1$}] (11) at (0, 2.5) {};
    		\node [label=above:{$2$}] (12) at (2, 2.5) {};
    		\node [label=left:{$0$}] (13) at (-3.5, 1) {};
    		\node [label=left:{$1$}] (14) at (-3.5, -1) {};
    		\node (15) at (3.5, 1) {};
    		\node (16) at (3.5, -1) {};
    		\node (17) at (-2, -2.5) {};
    		\node (18) at (0, -2.5) {};
    		\node (19) at (2, -2.5) {};
    	\end{pgfonlayer}
    	\begin{pgfonlayer}{edgelayer}
    		\draw (6.center) to (9.center);
    		\draw (7.center) to (8.center);
    		\draw (6.center) to (7.center);
    		\draw (9.center) to (8.center);
    		\draw (10.center) to (17.center);
    		\draw (11.center) to (18.center);
    		\draw (12.center) to (19.center);
    		\draw (13.center) to (15.center);
    		\draw (14.center) to (16.center);
    	\end{pgfonlayer}
    \filldraw [black]
        (0) circle (2.5pt)
        (1) circle (2.5pt)
        (2) circle (2.5pt)
        (3) circle (2.5pt)
        (4) circle (2.5pt)
    	(5) circle (2.5pt);
    \end{tikzpicture}

    \caption{The embedding of toroidal $(2,3)$-grid. The vertices are given by the row and column numbers.  \label{fig:2_3grid}}
\end{figure}

Note that for $n,m \geq 3$, the toroidal $(n,m)$-grid is an embedding of the graph $C_n \Box C_m$. For $n$ or $m$ less than $3$, the underlying graph has multi-edges and/or loops. It is easy to see that the toroidal $(n,m)$-grid is isomorphic to the $(m,n)$-grid as maps; to avoid redundancy, we will state our results only for $n\leq m$. 
In the following lemma, we give an expression for the vertex-face incidence matrix of the toroidal $(n,m)$-grid. Throughout this section, we denote by $P_k$ the cyclic $k \times k$ permutation matrix that maps the standard basis vector $\Ze_{i} \in \cx^{k}$ to $\Ze_{i - 1}$, with the index modulo $k$.

\begin{lemma}
\label{lem:toroidal_incidence}
The vertex-face incidence matrix $C$ of the toroidal $(n,m)$-grid can, with an appropriate ordering of the vertices and faces, be written as
\[
C = (P_n + I_n) \otimes (P_m + I_m).
\]
\end{lemma}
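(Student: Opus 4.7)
The plan is essentially combinatorial: identify the faces of the map from the rotation system, write down the vertex-face incidence relation explicitly, and exploit the product structure $V = \ints_n \times \ints_m$ to factor $C$ as a Kronecker product.

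First I would trace the facial walks using the given rotation system. Starting from the arc $(a,b) \to (a,b+1)$, the facial walk convention picks up the edge following the reverse arc in the rotation at $(a,b+1)$. Working this out step-by-step, I would show that every facial walk is a $4$-cycle of the form
\[
(a,b) \;\to\; (a, b+1) \;\to\; (a-1, b+1) \;\to\; (a-1, b) \;\to\; (a,b),
\]
traversing the four corners of a unit square of the grid. This identifies each face with a unique such quadruple of vertices, and I would label each face by one specific corner, namely the ``bottom-right'' one $(a, b+1)$. This yields a bijection between the face set and $\ints_n \times \ints_m$.

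Next I compute the vertex-face incidence relation using this labeling. The face indexed by $(\alpha, \beta) \in \ints_n \times \ints_m$ is incident to vertex $(i,j)$ if and only if $\alpha \in \{i, i+1\}$ and $\beta \in \{j, j+1\}$, where indices are taken modulo $n$ and $m$ respectively. This condition factors into independent conditions on the two coordinates. Ordering both vertices and faces lexicographically, so that we may identify $\cx^{V}$ and $\cx^{F}$ with $\cx^n \otimes \cx^m$, I get $C = X \otimes Y$ where $X(i,\alpha) = [\alpha \in \{i, i+1\}]$ and $Y(j,\beta) = [\beta \in \{j, j+1\}]$. Since $(P_k)_{i,\alpha} = [\alpha = i+1]$ and $(I_k)_{i,\alpha} = [\alpha = i]$ by the definition of $P_k$, this gives $X = I_n + P_n$ and $Y = I_m + P_m$, hence $C = (P_n + I_n) \otimes (P_m + I_m)$ as claimed.

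The main obstacle here is really just bookkeeping: correctly tracing the facial walks from the rotation system with consistent clockwise conventions, and choosing the face labeling so that the resulting permutation matrix is $P_k$ rather than $P_k^T$. A different but equally natural labeling (for example, labeling each face by its top-left corner) would yield a transposed expression such as $(I_n + P_n^T) \otimes (I_m + P_m^T)$; the specific choice of the bottom-right corner is precisely what produces the stated form $(P_n + I_n) \otimes (P_m + I_m)$. As a sanity check, one can verify the formula directly for the $(2,2)$-grid, where $C$ is the $4 \times 4$ all-ones matrix, agreeing with $(P_2+I_2) \otimes (P_2+I_2) = J_2 \otimes J_2$.
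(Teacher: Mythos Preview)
Your approach is essentially the same as the paper's: label the faces by an element of $\ints_n\times\ints_m$, observe that the incidence condition factors over the two coordinates, and read off the Kronecker product. The paper organizes this by row-blocks of $C$ while you go straight to the tensor factorization, but the content is identical.

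There is one genuine gap. Your incidence condition ``face $(\alpha,\beta)$ is incident to vertex $(i,j)$ iff $\alpha\in\{i,i+1\}$ and $\beta\in\{j,j+1\}$'' is a $0$--$1$ condition, but $C$ records \emph{multiplicity}: the $(v,f)$-entry is the number of times $v$ appears on the facial walk of $f$. For $n=1$ (or $m=1$) the facial walk is a closed walk of length $4$ that visits each of its two vertices twice, so the correct entries of $C$ are $2$, not $1$. Concretely, your $X(i,\alpha)=[\alpha\in\{i,i+1\}]$ gives $X=[1]$ when $n=1$, whereas $P_1+I_1=[2]$; so your identification $X=I_n+P_n$ fails exactly in the degenerate cases. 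The paper handles $n=1$ and $m=1$ separately for this reason. The cleanest fix is to replace the set membership by a count: the number of corners of the face $(\alpha,\beta)$ equal to $(i,j)$ is $\bigl([\alpha=i]+[\alpha=i+1]\bigr)\bigl([\beta=j]+[\beta=j+1]\bigr)$, which directly yields $(P_n+I_n)\otimes(P_m+I_m)$ for all $n,m\ge 1$. Since the paper later uses this lemma for the $(1,m)$-grid, you do need the degenerate case.
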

\begin{proof}
First, assume that $n,m \geq 2$. Partition the vertices and faces of $X$ by `row': that is, define
\[
V_i = \{(i,j) : j \in \ints_m\} \quad \text{and} \quad F_i = \{f_{i,j} : j \in \ints_m\}
\]
Every vertex in $V_i$ is incident to two elements of $F_i$, namely $f_{i,j}$ and $f_{i,j-1}$. The submatrix of $C$ corresponding to the vertices of $V_i$ and the faces of $F_i$ is (with the correct ordering) given by $P_m + I_m$. The submatrix of $C$ corresponding to $V_i$ and $F_{i-1}$ is also given by $P_m + I_m$. Consequently, it is not difficult to see that
\begin{equation}
\label{eq:grids_Cmatrix}
C = (P_n + I_n) \otimes (P_m + I_m).
\end{equation}
If instead $n = 1$ and $m > 1$, then there is only one `row' of vertices and faces; we find
\[
C = 2(P_m + I_m).
\]
Similarly, $C = 2(P_n + I_n)$ if $n > 1$ and $m = 1$, and finally $C = \begin{bmatrix}4\end{bmatrix} $ if $n = m = 1$. These expressions coincide with with \eqref{eq:grids_Cmatrix} (note that $P_1 = \begin{bmatrix} 1 \end{bmatrix}$), so that in fact \eqref{eq:grids_Cmatrix} holds for all $n,m \geq 1$.
\end{proof}

In the following lemmas, we look the structure of the matrices $B_t$ for  the toroidal $(1,m)$- and $(2,m)$-grids, in order to prove perfect state transfer occurs. Note that  $P_m^{-t}=(P_m^t)^T$.
\begin{lemma}
\label{lem:B_1m_grid}
For the toroidal $(1,m)$-grid, we have
\begin{equation}
\label{eq:B_1m_grid}
B_t = \frac{1}{2}(P_m^t + P_m^{-t})
\end{equation}
for all $t \in \ints_{\geq 0}$.
\end{lemma}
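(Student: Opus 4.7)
The plan is to compute $B_1$ directly from the normalized vertex-face incidence matrix, and then extend to general $t$ via the Chebyshev recurrence of Theorem \ref{thm:B_Chebyshev}.

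The toroidal $(1,m)$-grid is a type $(4,4)$ map with $|V|=|F|=m$: each vertex has degree $4$ (the rotation lists four arc-slots), and by Euler's formula on the torus, since $|E|=2m$, we get $|F|=m$. Therefore $D = \Delta = 4I_m$, and by Lemma \ref{lem:toroidal_incidence} we have $C = 2(P_m + I_m)$. It follows that
\[
\Chat = D^{-1/2} C \Delta^{-1/2} = \tfrac{1}{2}(P_m + I_m).
\]
Since $P_m$ is a permutation matrix, in particular unitary, a short computation gives
\[
B_1 = 2\Chat\Chat^T - I = \tfrac{1}{2}(P_m + I_m)(P_m^{-1} + I_m) - I = \tfrac{1}{2}(P_m + P_m^{-1}),
\]
which agrees with \eqref{eq:B_1m_grid} at $t=1$, and the base case $B_0 = I = \tfrac{1}{2}(P_m^0 + P_m^{-0})$ is trivial.

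For general $t$ I would proceed by induction, using Theorem \ref{thm:B_Chebyshev} to conclude $B_t = T_t(B_1)$ and then invoking the Chebyshev recurrence $T_{t+1}(x) = 2xT_t(x) - T_{t-1}(x)$. Assuming the formula for $B_{t-1}$ and $B_t$, expanding
\[
2B_1 B_t - B_{t-1} = \tfrac{1}{2}(P_m + P_m^{-1})(P_m^t + P_m^{-t}) - \tfrac{1}{2}(P_m^{t-1} + P_m^{-(t-1)})
\]
and cancelling the $P_m^{t-1}$ and $P_m^{-(t-1)}$ terms yields $\tfrac{1}{2}(P_m^{t+1} + P_m^{-(t+1)})$, completing the induction. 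Equivalently, one can appeal to the spectral decomposition of $P_m$: its eigenvalues are the $m$-th roots of unity $\omega^k = e^{2\pi ik/m}$, so $B_1$ acts as $\cos(2\pi k/m)$ on the corresponding eigenspace, and the identity $T_t(\cos\theta) = \cos(t\theta)$ then gives $B_t$ acting as $\cos(2\pi tk/m) = \tfrac{1}{2}(\omega^{tk} + \omega^{-tk})$, which is $\tfrac{1}{2}(P_m^t + P_m^{-t})$ on the same eigenspace.

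I do not anticipate any real obstacle here: once Theorem \ref{thm:B_Chebyshev} is applied, the lemma reduces to the observation that $P_m + P_m^{-1}$ and the Chebyshev polynomials interact well because $P_m$ is a normal operator whose spectrum lies on the unit circle.
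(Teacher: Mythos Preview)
Your proof is correct and follows essentially the same approach as the paper: compute $\Chat$ from Lemma \ref{lem:toroidal_incidence} using that the map has type $(4,4)$, verify the formula for $B_0$ and $B_1$, and then induct via the Chebyshev recurrence from Theorem \ref{thm:B_Chebyshev}. The spectral argument you add at the end is a pleasant alternative justification, but the core argument is identical to the paper's.
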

\begin{proof}
The equation clearly holds for $t = 0$. For $t=1$, note that by Lemma \ref{lem:toroidal_incidence}:
\[
\Chat = \frac{1}{4} C = \frac{1}{2}(P_m + I_m),
\]
since the map has type $(4,4)$. We proceed by induction and see that
\[
B_1 = 2\Chat \Chat^T - I_m 
= \frac{1}{2}(P_m + I_m)(P_m + I_m)^T - I_m 
= \frac{1}{2}(P_m + P_m^{-1}).
\]
By Theorem \ref{thm:B_Chebyshev}, $B_t$ satisfies the recurrence of the Chebyshev polynomials of the first kind,  and thus 
\[
\begin{split}
B_{t+1} &= 2B_{t}B_1 - B_{t-1} \\
&= \frac{1}{2}(P_m^t + P_m^{-t})(P_m + P_m^{-1}) - \frac{1}{2}(P_m^{t-1} + P_m^{-(t-1)}) \\
&= \frac{1}{2}(P_m^{t+1} + P_m^{t-1} + P_m^{-t+1} + P_m^{-t-1}) - \frac{1}{2}(P_m^{t-1} + P_m^{-(t-1)}) \\
&= \frac{1}{2}(P_m^{t+1} + P_m^{-(t+1)}).  
\end{split}
\]
The result now follows.
\end{proof}

Another grid that admits perfect state transfer is the $(2,m)$-toroidal grid, for any odd $m$, as we prove in the following lemma.

\begin{lemma}
\label{lem:B_2m_grid}
For the toroidal $(2,m)$-grid, we have
\begin{equation}
\label{eq:B_2m_grid}
B_t = \frac{1}{4} J_2 \otimes (P_m^t + P_m^{-t} - 2(-1)^{t}I_m) + (-1)^t I_m \otimes I_m
\end{equation}
for all $t \in \ints_{\geq 0}$. 
\end{lemma}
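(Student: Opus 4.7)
The plan is to mirror the proof of Lemma \ref{lem:B_1m_grid}: verify the base cases $t=0,1$ and then apply the Chebyshev recurrence $B_{t+1} = 2 B_t B_1 - B_{t-1}$ from Theorem \ref{thm:B_Chebyshev}. The case $t=0$ is immediate once the final term is read as $I_2 \otimes I_m$ (which the dimensions of $B_t$ force; the $I_m \otimes I_m$ in the statement appears to be a typo). For $t=1$, the toroidal $(2,m)$-grid is of type $(4,4)$, so Lemma \ref{lem:toroidal_incidence} gives $\Chat = \tfrac14(P_2+I_2) \otimes (P_m+I_m)$; using $P_2+I_2 = J_2$ and $J_2^2 = 2J_2$, one computes
\[
B_1 \;=\; 2\Chat\Chat^T - I_{2m} \;=\; \tfrac14 J_2 \otimes (P_m + P_m^{-1} + 2 I_m) - I_2 \otimes I_m,
\]
which matches \eqref{eq:B_2m_grid} at $t=1$.

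The cleanest way to run the induction is to set $K_2 := I_2 - \tfrac12 J_2$ (the orthogonal projection onto $\vecspan\{(1,-1)^T\}$) and $A_t := P_m^t + P_m^{-t}$, so that the claimed formula is equivalent to
\[
B_t \;=\; \tfrac14 J_2 \otimes A_t + (-1)^t K_2 \otimes I_m.
\]
The key algebraic identities are $J_2 K_2 = K_2 J_2 = 0$, $J_2^2 = 2J_2$, and $K_2^2 = K_2$, which reflect the orthogonal splitting $\cx^2 = \vecspan\{\ones_2\} \oplus \vecspan\{(1,-1)^T\}$ and cause the $J_2$- and $K_2$-blocks of $B_t$ to decouple completely. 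Under this decomposition $B_1$ also has the form $\tfrac14 J_2 \otimes A_1 - K_2 \otimes I_m$, confirming the rewriting.

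Using these identities, all cross-terms in $B_t B_1$ vanish, leaving $B_t B_1 = \tfrac18 J_2 \otimes (A_t A_1) + (-1)^{t+1} K_2 \otimes I_m$. Because $A_t A_1 = A_{t+1} + A_{t-1}$ (the standard two-term recurrence for $P_m^{\pm t}$), we obtain
\[
2 B_t B_1 - B_{t-1} \;=\; \tfrac14 J_2 \otimes A_{t+1} + (-1)^{t+1} K_2 \otimes I_m,
\]
which is exactly the formula at $t+1$. I do not anticipate a serious obstacle; the only care required is in tracking signs and in recognizing that introducing the complementary projector $K_2$ makes the two Chebyshev-style recurrences on the $J_2$- and $K_2$-blocks completely independent, so the induction reduces to the familiar identity $T_{t+1}(x) + T_{t-1}(x) = 2x T_t(x)$ on the $J_2$-block and the triviality $(-1)^{t+1} = -(-1)^{t-1}$ on the $K_2$-block.
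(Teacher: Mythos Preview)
Your proof is correct and follows the same overall strategy as the paper: verify the base cases $t=0,1$ using Lemma~\ref{lem:toroidal_incidence}, then induct via the Chebyshev recurrence of Theorem~\ref{thm:B_Chebyshev}. You also correctly spot that the $I_m\otimes I_m$ in the statement should be $I_2\otimes I_m=I_{2m}$.

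Where you differ is in the algebraic packaging of the induction step. The paper keeps the identity term as $(-1)^t I_{2m}$ and defines $A_t = P_m^t + P_m^{-t} - 2(-1)^t I_m$; as a consequence the expansion of $2B_tB_1 - B_{t-1}$ produces several cross-terms, and the paper must compute the product $A_tA_1$ for this more complicated $A_t$ and then cancel four terms by hand. Your rewriting $(-1)^t I_{2m} = \tfrac12(-1)^t J_2\otimes I_m + (-1)^t K_2\otimes I_m$, absorbing the first piece into the $J_2$-block and isolating the $K_2$-block, exploits the orthogonality $J_2K_2=0$ so that all cross-terms vanish automatically. The induction then reduces to the clean identity $A_tA_1 = A_{t+1}+A_{t-1}$ on the $J_2$-block (with your simpler $A_t = P_m^t+P_m^{-t}$) and a sign check on the $K_2$-block. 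This is the same proof at heart, but your use of the complementary projector makes the bookkeeping noticeably lighter than the paper's direct expansion.
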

\begin{proof}
We proceed in a similar manner as for the previous lemma. The equation clearly holds for $t = 0$. For $t = 1$, note that 
\[
\Chat = \frac{1}{4}C = \frac{1}{4} J_2 \otimes (P_m + I_m)
\]
by Lemma \ref{lem:toroidal_incidence}. We proceed by induction. Since $J_2J_2^T = 2J_2$, we find that
\[
B_{1} = 2\Chat \Chat^T - I_{2m} 
= \frac{1}{4} J_2 \otimes (P_m + P_m^{-1} + 2 I_m) - I_{2m},
\]
which coincides with \eqref{eq:B_2m_grid}. Let 
$A_t =  P_m^t + P_m^{-t} - 2(-1)^{t}I_m$.
Using Theorem \ref{thm:B_Chebyshev} for the inductive step, we obtain
\[
\begin{split}
B_{t+1} &= 2B_{t}B_1 - B_{t-1} \\
&= 2\left(\frac{1}{4} J_2 \otimes A_t + (-1)^t I_{2m} \right) \left(\frac{1}{4}J_2 \otimes A_1 - I_{2m} \right) - \frac{1}{4} J_2 \otimes A_{t-1} - (-1)^{t-1} I_{2m} \\
&= \frac{1}{8} J_2^2 \otimes A_t A_1 - \frac{1}{2}J_2 \otimes A_t + \frac{1}{2}(-1)^t J_2 \otimes A_1 - 2(-1)^t I_{2m} - \frac{1}{4} J_2 \otimes A_{t-1} - (-1)^{t-1} I_{2m} \\
&= \frac{1}{4} J_2 \otimes \left(A_tA_1 - 2A_t + 2(-1)^t A_1 - A_{t-1}\right) + (-1)^{t+1} I_{2m}.
\end{split}
\]
Since
\[
\begin{split}
A_tA_1 &= (P_m^t + P_m^{-t} - 2(-1)^{t}I_m)(P_m + P_m^{-1} + 2I_m) \\
&= P_m^{t+1} + P_m^{t-1} + P_m^{-t+1} + P_m^{-t-1}
 - 2(-1)^t(P_m + P_m^{-1} ) + 2(P_m^t + P_m^{-t}) - 4(-1)^t I_m \\
&= A_{t + 1} + A_{t-1} - 4(-1)^t I_m - 2(-1)^t( A_1  - 2I_m) + 2(A_t + 2(-1)^t I_m) - 4(-1)^t I_m \\
&= A_{t+1} + A_{t-1} - 2(-1)^t A_1 + 2A_t,
\end{split}
\]
we see that 
$B_{t+1} = \frac{1}{4}J_2 \otimes A_{t+1} + (-1)^{t+1} I_{2m}$,
as desired.
\end{proof}

We now use the form of $B_t$ given by Lemmas \ref{lem:B_1m_grid} and \ref{lem:B_2m_grid}, to obtain our perfect state transfer results on the grids. 

\begin{theorem}\label{thm:gridsPST}
\phantom{stuff}
\begin{enumerate}[(a)]
    \item For the toroidal $(1,m)$-grid,  the map is periodic at time $m$. If $m = 2\ell$ is even, there is perfect state transfer from vertex $(0,i)$ to vertex $(0,i+\ell)$ at time $\ell$ for all $i \in \ints_{m}$.
    \item For the toroidal $(2,m)$-grid, if $m$ is even, the map is periodic at time $m$. If $m$ is odd, there is perfect state transfer from vertex $(0,i)$ to vertex $(1,i)$ at time $m$ for all $i \in \ints_m$, and the map is periodic at time $2m$.
\end{enumerate}
\end{theorem}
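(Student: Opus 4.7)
The plan is to reduce each claim to an entrywise calculation of $B_\tau$ using the closed forms already established in Lemmas \ref{lem:B_1m_grid} and \ref{lem:B_2m_grid}. Every vertex of the toroidal $(n,m)$-grid has degree $4$, so Lemma \ref{lem:uv_PST_simplified} applies: $uv$-PST at time $\tau$ is equivalent to $B_\tau(u,v) = 1$, and periodicity of the whole map at time $\tau$ is equivalent to $B_\tau = I$. The only algebraic fact beyond the two lemmas that is needed is $P_m^m = I_m$, which causes the powers of $P_m$ appearing in $B_\tau$ to collapse at exactly the times of interest.

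For part (a), Lemma \ref{lem:B_1m_grid} gives $B_\tau = \tfrac{1}{2}(P_m^\tau + P_m^{-\tau})$. Setting $\tau = m$ yields $B_m = \tfrac{1}{2}(I_m + I_m) = I_m$, which proves periodicity of the $(1,m)$-grid at time $m$. When $m = 2\ell$, I would observe that $P_m^{-\ell} = P_m^{m-\ell} = P_m^{\ell}$, so $B_\ell = P_m^\ell$. Since $m = 2\ell$, this matrix is symmetric and satisfies $(P_m^\ell)_{i,\,i+\ell} = 1$ for every $i \in \ints_m$, which by Lemma \ref{lem:pst_B_General} gives perfect state transfer from $(0,i)$ to $(0,i+\ell)$.

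For part (b), I would evaluate Lemma \ref{lem:B_2m_grid} at $\tau = m$, splitting on the parity of $m$. When $m$ is even, $(-1)^m = 1$ and $P_m^m + P_m^{-m} - 2I_m = 0$, so the formula collapses to $B_m = I_{2m}$, yielding periodicity at time $m$. When $m$ is odd, $(-1)^m = -1$ and the same expression equals $4 I_m$, so
\[
B_m \;=\; \tfrac{1}{4}\, J_2 \otimes 4 I_m \;-\; I_{2m} \;=\; (J_2 - I_2) \otimes I_m,
\]
the permutation matrix that swaps the two rows of vertices. This is symmetric with a $1$ in every entry of the form $((0,i),(1,i))$, establishing PST from $(0,i)$ to $(1,i)$. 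Periodicity of the $(2,m)$-grid at time $2m$ then follows by substituting $\tau = 2m$ into the same formula, since $(-1)^{2m} = 1$ and $P_m^{2m} = I_m$ together force $B_{2m} = I_{2m}$.

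No genuine obstacle is anticipated: the Lemmas have already done the heavy lifting by giving Chebyshev-type closed forms for $B_t$, the degree-regularity of the grids removes any issues with applying Lemma \ref{lem:uv_PST_simplified}, and the remaining work is a short substitution together with one parity split. The only aspect that requires any care is bookkeeping the exponents modulo $m$ in part (a) so that $P_m^{-\ell}$ is correctly identified with $P_m^\ell$.
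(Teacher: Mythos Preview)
Your proposal is correct and follows essentially the same route as the paper: both arguments plug $\tau = m$, $\ell$, or $2m$ into the closed forms for $B_t$ from Lemmas \ref{lem:B_1m_grid} and \ref{lem:B_2m_grid}, use $P_m^m = I_m$ together with the parity of $m$ to collapse the expressions to $I$ or to the antipodal permutation, and then read off periodicity or PST via Lemma \ref{lem:pst_B_General}. The only cosmetic difference is that the paper displays $P_m^\ell$ explicitly as the block matrix $\begin{bmatrix}0 & I_\ell \\ I_\ell & 0\end{bmatrix}$, whereas you argue directly that $P_m^{-\ell} = P_m^\ell$ and identify the relevant entries.
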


\begin{proof}
For part (a), 
 by Lemma \ref{eq:B_1m_grid} and using that $P_m^m = I$, we have
\[
B_m = \frac{1}{2}(P_m^t + P_m^{-t}) = I_m.
\]
This is equivalent to the map being periodic at time $m$. If $m = 2\ell$ is even, then
\[
P_m^\ell = P_m^{-\ell} = \begin{bmatrix}
0 & I_\ell \\ I_\ell & 0,
\end{bmatrix}
\]
which implies that there is perfect state transfer from vertex $(0,i)$ to vertex $(0,i+ \ell)$ at time $\ell$.

For part (b), by Lemma \ref{lem:B_2m_grid}, we have
\[
B_m = \frac{1}{4} J_2 \otimes (P_m^m + P_m^{-m} - 2(-1)^{m}I_m) + (-1)^m I_m \otimes I_m
\]
Clearly, if $m$ is even, then 
$P_m^{m} + P_m^{m} - (-1)^m I_m = 0$,
so $B_m = I_m \otimes I_m$ and the map is periodic at time $m$. If $m$ is odd, then
\[
B_m = J_2 \otimes I_m - I_{m} \otimes I_m.
\]
This matrix swaps the vertices $(0,i)$ and $(1,i)$ for all $i \in \ints_m$, hence there is prefect state transfer between these pairs of vertices. Finally, $B_{2m} = I_m \otimes I_m$, so the map is periodic at time $2m$.
\end{proof}

\begin{figure}[ht]
\centering
\begin{subfigure}{0.26\textwidth}
    \centering
    \includegraphics[width=\textwidth]{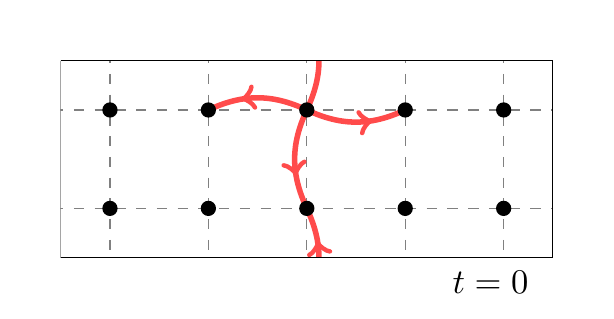}
\end{subfigure}
\kern-1em
\begin{subfigure}{0.26\textwidth}
    \centering
    \includegraphics[width=\textwidth]{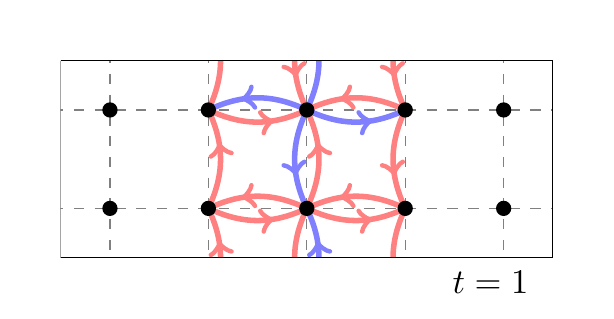}
\end{subfigure}
\kern-1em
\begin{subfigure}{0.26\textwidth}
    \centering
    \includegraphics[width=\textwidth]{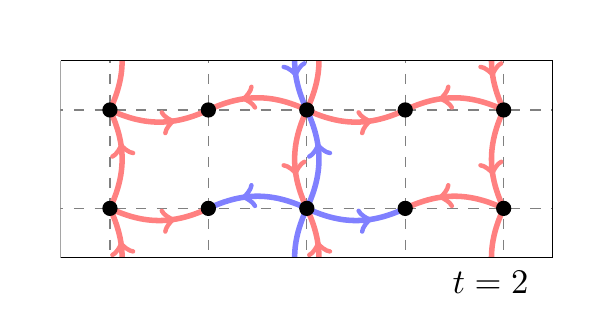}
\end{subfigure}
\kern-1em
\begin{subfigure}{0.26\textwidth}
    \centering
    \includegraphics[width=\textwidth]{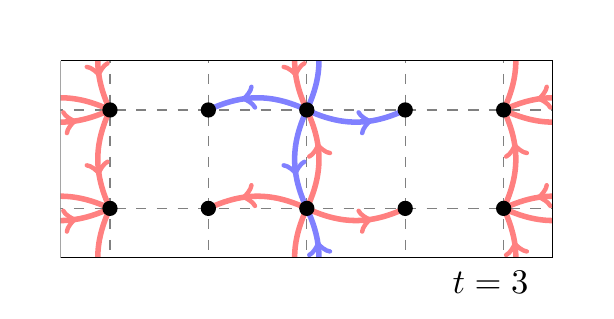}
\end{subfigure}

\vspace{-5pt}

\begin{subfigure}{0.26\textwidth}
    \centering
    \includegraphics[width=\textwidth]{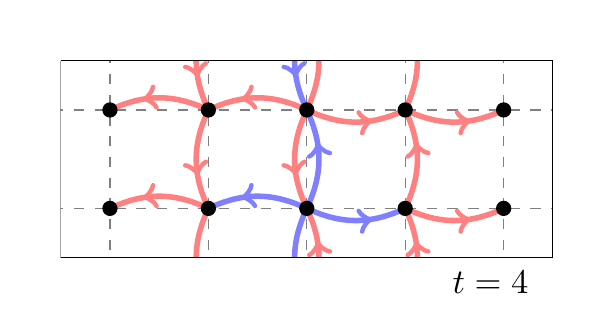}
\end{subfigure}
\kern-1em
\begin{subfigure}{0.26\textwidth}
    \centering
    \includegraphics[width=\textwidth]{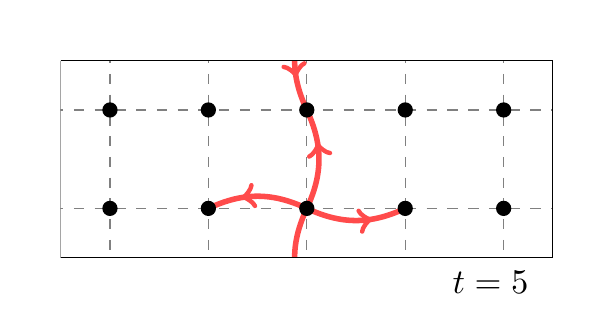}
\end{subfigure}
\kern-1em
\begin{subfigure}{0.26\textwidth}
    \centering
    \includegraphics[width=\textwidth]{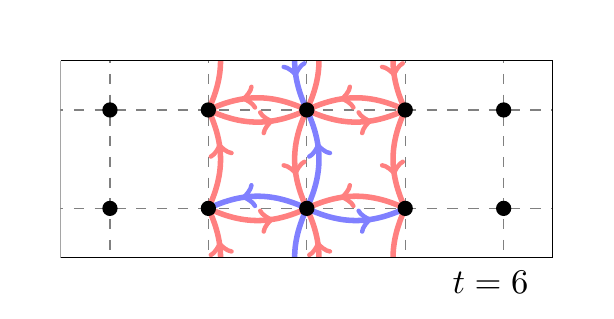}
\end{subfigure}
\kern-1em
\begin{subfigure}{0.26\textwidth}
    \centering
    \includegraphics[width=\textwidth]{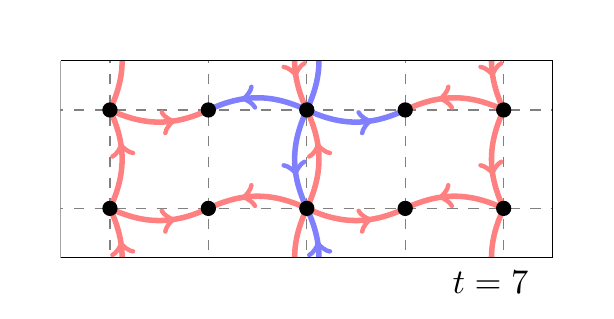}
\end{subfigure}

\vspace{-5pt}

\begin{subfigure}{0.26\textwidth}
    \centering
    \includegraphics[width=\textwidth]{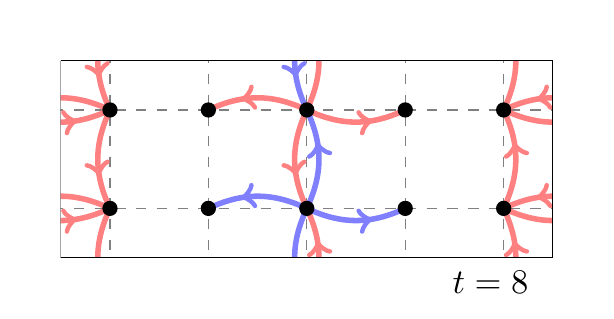}
\end{subfigure}
\kern-1em
\begin{subfigure}{0.26\textwidth}
    \centering
    \includegraphics[width=\textwidth]{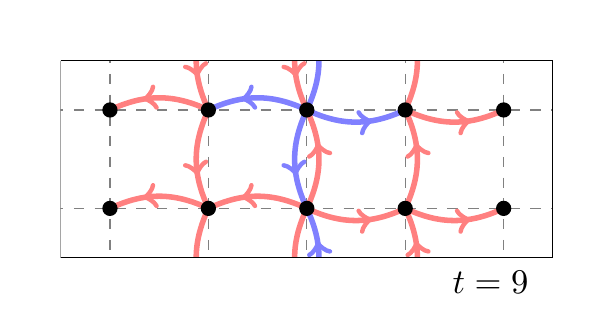}
\end{subfigure}
\kern-1em
\begin{subfigure}{0.26\textwidth}
    \centering
    \includegraphics[width=\textwidth]{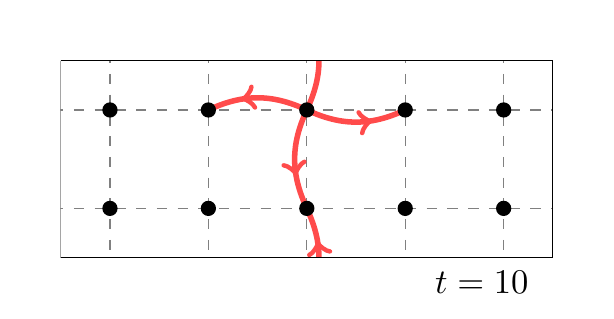}
\end{subfigure}
\kern-1em
\begin{subfigure}{0.26\textwidth}
    \centering
    \includegraphics[width=\textwidth]{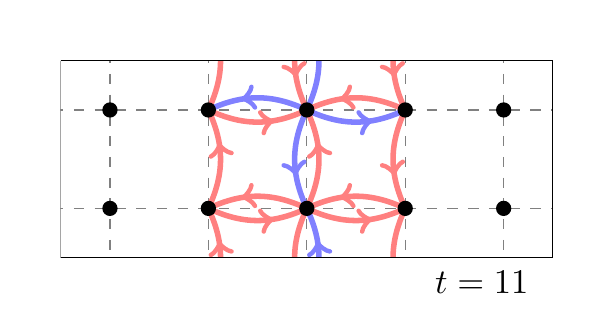}
\end{subfigure}
\caption{The toroidal $(2,5)$-grid, with perfect state transfer at time $5$ and periodicity at time $10$. The graphs are drawn on the cut-open torus, where the opposite sides are identified; for visual simplicity, we have omitted the labels on the boundary of the torus. The color (red, blue) represents the sign of the amplitude of an arc (positive, negative, resp.)\label{fig:2_5grid}}
\end{figure}

Next, we alter the toroidal $(1,m)$-grid to construct an infinite family of maps which have periodicity at every vertex with period $m$, where $U^m \neq I $ when $m$ is odd. Figure \ref{fig:1_5grid_doubled} shows the altered map for the toroidal $(1,5)$-grid.

\begin{lemma}\label{lem:variant1-m-grid}
Let $Y_m$ be the toroidal map obtained from the toroidal $(1,m)$-grid by replacing every non-loop edge by a digon. Then $Y_m$ is periodic with period $m$. If $m$ is odd, then $U^m \neq I$.
\end{lemma}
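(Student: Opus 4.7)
The plan is to reduce the analysis of $Y_m$ to that of the toroidal $(1,m)$-grid by showing that both maps yield the same matrix $B_1$, and then to appeal to Theorem \ref{thm:periodic_implies_id} to rule out $U^m = I$ when $m$ is odd.

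First we would record the combinatorial data of $Y_m$. The $(1,m)$-grid has $m$ loops (the ``down'' edges), $m$ non-loop edges (the ``right'' edges), and $m$ square faces of degree $4$. Replacing each non-loop edge by a digon doubles the $m$ right edges and creates $m$ new digon faces of degree $2$, while the $m$ original faces persist and still have degree $4$ (they now traverse the parallel edges on either side). Consequently $Y_m$ has $|V| = m$, $|E| = 3m$, and $|F| = 2m$, and every vertex has degree $6$: two from the loop plus four from the two neighbouring digons. Euler's formula $m - 3m + 2m = 0$ confirms the embedding is toroidal.

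Next we would compute $B_1 = 2\Chat\Chat^T - I$. Ordering faces with the $m$ original faces first and the $m$ digon faces second, each vertex lies twice on each of its two adjacent original faces and once on each of its two adjacent digon faces, so
\[
C' = \bigl[\,2(P_m+I_m) \,\bigm|\, P_m+I_m\,\bigr].
\]
With $D = 6 I_m$ and $\Delta = \diag(4 I_m,\,2 I_m)$, a direct computation gives
\[
\Chat\Chat^T = \tfrac{1}{6}\bigl[(P_m+I_m)(P_m+I_m)^T + \tfrac{1}{2}(P_m+I_m)(P_m+I_m)^T\bigr] = \tfrac{1}{4}(2 I_m + P_m + P_m^{-1}),
\]
so $B_1 = \tfrac{1}{2}(P_m + P_m^{-1})$. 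This is exactly the $B_1$ obtained in the proof of Lemma \ref{lem:B_1m_grid} for the $(1,m)$-grid. By Theorem \ref{thm:B_Chebyshev} and the Chebyshev induction used there, $B_t = \tfrac{1}{2}(P_m^t + P_m^{-t})$ for every $t \in \ints_{\geq 0}$, so in particular $B_m = I_m$ and $Y_m$ is periodic at time $m$.

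Finally, for odd $m$, Theorem \ref{thm:periodic_implies_id} tells us that periodicity at the odd time $m$ together with $U^m = I$ would force $|V| = |F|$; but $|V| = m \neq 2m = |F|$, so $U^m \neq I$. The only step that requires genuine calculation is the incidence bookkeeping that establishes $C'$; once this reproduces the same $B_1$ as the $(1,m)$-grid, the conclusion is automatic. The family $\{Y_m : m \text{ odd}\}$ then demonstrates the sharpness of Theorem \ref{thm:periodic_implies_id}.
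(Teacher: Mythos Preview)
Your proof is correct and follows essentially the same approach as the paper: you compute the vertex-face incidence matrix of $Y_m$, verify that $\Chat\Chat^T$ coincides with that of the toroidal $(1,m)$-grid, invoke the Chebyshev recurrence to conclude $B_m = I_m$, and then use $|V| = m \neq 2m = |F|$ together with Theorem~\ref{thm:periodic_implies_id} to rule out $U^m = I$ for odd $m$. The only cosmetic difference is that the paper writes the incidence matrix as $C_* = [\,C \mid \tfrac{1}{2}C\,]$ with $C = 2(P_m + I_m)$ and factors the normalization slightly differently, but the computation and logic are identical.
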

\begin{proof}
By Lemma \ref{lem:toroidal_incidence}, the vertex-face incidence matrix for the toroidal $(1,m)$-grid can be written as
\[
C = 2(I_m + P_m),
\]
Replacing every non-loop edge of this grid by a digon introduces $m$ new faces of degree $2$, each of which is incident to two vertices. Hence the vertex-face incidence matrix of $Y_m$ can be written as
\[
C_* = \begin{bmatrix}C & \frac{1}{2}C\end{bmatrix}.
\]
The vertices of $Y_m$ have degree $6$, the original faces have degree $4$, and the newly introduced faces degree. The normalized vertex-face incidence matrix of $Y_m$ is hence
\[
\begin{split}
\widehat{C}_* &= \frac{1}{\sqrt{6}} C_* \begin{bmatrix}
\frac{1}{2} I_m & 0 \\ 0 & \frac{1}{\sqrt{2}} I_m
\end{bmatrix} \\
&= \frac{1}{\sqrt{6}} \begin{bmatrix}\frac{1}{2} C & \frac{1}{2\sqrt{2}}C\end{bmatrix}.
\end{split}
\]
This implies that
\[
\widehat{C}_*\widehat{C}_*^T = \frac{1}{6}\left(\frac{1}{4}CC^T + \frac{1}{8}CC^T\right) = \frac{1}{16} CC^T = \Chat\Chat^T.
\]
So $Y_m$ has the same $B_t$-matrix as the toroidal $(1,m)$-grid, for all $t$. Since the latter is periodic at time $m$ by Lemma \ref{lem:B_1m_grid}, the former is as well. Now $|V(X_m)| < |F(Y_m)|$, so for $m$ odd, $U^m \neq I$ by Theorem \ref{thm:periodic_implies_id}.
\end{proof}

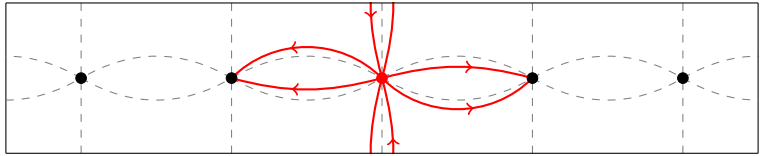
\begin{figure}[htb]
\centering
\begin{tikzpicture}
	\begin{pgfonlayer}{nodelayer}
		\node (0) at (-4, 0) {};
		\node (1) at (-2, 0) {};
		\node (2) at (0, 0) {};
		\node (3) at (2, 0) {};
		\node (4) at (4, 0) {};
		\node (5) at (-6, 1) {};
		\node (7) at (-6, -1) {};
		\node (9) at (-4, 1) {};
		\node (10) at (-2, 1) {};
		\node (11) at (0, 1) {};
		\node (12) at (2, 1) {};
		\node (13) at (3, 1) {};
		\node (14) at (-4, -1) {};
		\node (15) at (-2, -1) {};
		\node (16) at (0, -1) {};
		\node (17) at (2, -1) {};
		\node (18) at (3, -1) {};
		\node (19) at (-2, 2) {};
		\node (20) at (-2, -2) {};
		\node (27) at (-6, 0) {};
		\node (28) at (-7, 1) {};
		\node (29) at (-7, -1) {};
		\node (30) at (-8,0) {};
	\end{pgfonlayer}
	\begin{pgfonlayer}{edgelayer}
	    \clip (-7,-1) rectangle (3,1);
		\draw [gray, dashed, bend left] (0.center) to (1.center);
		\draw [gray, dashed, bend left] (1.center) to (2.center);
		\draw [gray, dashed, bend left] (2.center) to (3.center);
		\draw [gray, dashed, bend left] (3.center) to (4.center);
		\draw [gray, dashed] (5.center) to (7.center);
		\draw [gray, dashed] (9.center) to (14.center);
		\draw [gray, dashed] (10.center) to (15.center);
		\draw [gray, dashed] (11.center) to (16.center);
		\draw [gray, dashed] (12.center) to (17.center);
		\draw [thin] (13.center) to (18.center);
		\draw [thin] (28.center) to (29.center);
		\draw [gray, dashed, bend right] (0.center) to (27.center);
		\draw [gray, dashed, bend right] (27.center) to (0.center);
		\draw [gray, dashed, bend right] (0.center) to (1.center);
		\draw [gray, dashed, bend right] (1.center) to (2.center);
		\draw [gray, dashed, bend right] (2.center) to (3.center);
		\draw [gray, dashed, bend right] (3.center) to (4.center);
		\draw [gray, dashed, bend right] (27.center) to (30.center);
		\draw [gray, dashed, bend right] (30.center) to (27.center);
		\draw [thick, bend left=15, style=directed, color=red] (1.center) to (2.center);
		\draw [thick, bend right=45, style=directed, color=red] (1.center) to (2.center);
		\draw [thick, bend right=15, style=directed, color=red] (1.center) to (20.center);
		\draw [thick, bend right=15, style=directed, color=red] (1.center) to (19.center);
		\draw [thick, bend right=45, style=directed, color=red] (1.center) to (0.center);
		\draw [thick, bend left=15, style=directed, color=red] (1.center) to (0.center);
		\draw [thick, bend right=15, style=directed, color=red] (20.center) to (1.center);
		\draw [thick, bend right=15, style=directed, color=red] (19.center) to (1.center);
		\draw [thin] (28.center) to (13.center);
		\draw [thin] (29.center) to (18.center);
		\filldraw [black]
		    (0) circle (2pt)
		    (2) circle (2pt)
		    (3) circle (2pt)
		    (27) circle (2pt);
        \filldraw [red]
            (1) circle (2pt);
	\end{pgfonlayer}
\end{tikzpicture}
\caption{The toroidal $(1,5)$-grid with doubled non-loop edges, $Y_5$,  has periodicity at time $5$ at every vertex, but $U^5 \neq I$. The graph is drawn on the cut-open torus, where the opposite sides are identified; for visual simplicity, we have omitted the labels on the boundary of the torus.\label{fig:1_5grid_doubled}}
\end{figure}

\section{Computations}\label{sec:computations}

In this section, we offer context and motivation for some of our results. Since vertex-face walks are not yet well-studied in the literature, we performed numerical experiments on a large set of rotary maps to obtain intuition for their behavior. For this, we used a census of all rotary maps having at most $1000$ edges, provided by Conder \cite{Con2009, Con2012, ConDob2001}. In this list, each map is given as a presentation of its automorphism group. We used SageMath \cite{sagemath} to compute  the incidence matrices $N$ and $M$ for each map, and then NumPy \cite{numpy} to compute the transition matrix $U$ and analyse the vertex-face walk on that map. In doing so, we observed some noteworthy behaviour.
For instance, a large proportion of rotary maps up to $1000$ edges, the transition matrix $U$ satisfies the property $U^2 = I$. This provides motivation for our characterizations of such maps in Lemma \ref{lem:U^2=I} and Corollary \ref{cor:kd_U^2=I}.

We also observed that for census of rotary maps, periodicity  only occurred only with a period \linebreak$t \in \{1,2,6,12\}$. Periodic maps with period $t = 1$ have only one vertex (part (ii) of Corollary \ref{lem:periodic_odd_time}). In this case, $U = I$ only if $|F| = 1$. Note that Conder's census omits some degenerate maps, for example the map with one face and one vertex on the torus, as shown in Figure \ref{fig:2loop_torus}.

By Theorem \ref{thm:periodic_implies_id}, maps with a period of $t =2,6$ or $12$ satisfy $U^t = I$. It is interesting to note that for all of the maps that we tested that satisfy $U^t =I$ for $t=6,12$, the transition matrix $U$ does not have any primitive 6th or 12th roots of unity. Moreover, all of the periodic maps are included in a large subset of maps for which the matrix $CC^T$ has integer eigenvalues, which motivated Lemma \ref{lem:CCT_rational}.

Tables \ref{tab:regular_maps} and \ref{tab:chiral_maps} summarize our computations on regular and chiral maps, respectively. Each table shows the number of maps, of each class, that have integer eigenvalues, and in the last three columns for each $t \in \{2,6,12\}$ the number of maps for which $t$ is the smallest time at which $U^t = I$. In these tables, the regular maps are considered up to duality, and the chiral maps up to both duality and mirror image.

\begin{table}[htbp]
\centering
\begin{tabular}{cccccc}
\hline
\multirow{2}{*}{edges} & \multirow{2}{*}{maps} & \multirow{2}{*}{$\sigma(CC^T)\subset \ints$} & \multicolumn{3}{c}{periodicity} \\
& & & $U^2=I$ & $U^6=I$ & $U^{12}=I$ \\
\hline
2-100    & $660$  & $642$  & $482$  & $35$  & $7$   \\ 
101-200  & $1177$ & $1100$ & $696$  & $88$  & $29$  \\ 
201-300  & $1469$ & $1328$ & $778$  & $116$ & $48$  \\ 
301-400  & $1899$ & $1660$ & $875$  & $143$ & $57$  \\ 
401-500  & $1614$ & $1483$ & $862$  & $98$  & $17$  \\ 
501-600  & $2644$ & $2113$ & $997$  & $233$ & $116$ \\ 
601-700  & $1981$ & $1731$ & $955$  & $136$ & $52$  \\ 
701-800  & $3524$ & $2721$ & $1087$ & $266$ & $132$ \\ 
801-900  & $2325$ & $2048$ & $1054$ & $176$ & $60$  \\ 
901-1000 & $2392$ & $2066$ & $1030$ & $148$ & $32$  \\ 
\end{tabular}
\caption{This table shows the number of regular maps that admit periodicity, broken down into ranges of 100 edges.\label{tab:regular_maps}}
\end{table}
\begin{table}[htbp]
\centering
\begin{tabular}{cccccc}
\hline
\multirow{2}{*}{edges} & \multirow{2}{*}{maps} & \multirow{2}{*}{$\sigma(CC^T)\subset \ints$} & \multicolumn{3}{c}{periodicity} \\
& & & $U^2=I$ & $U^6=I$ & $U^{12}=I$ \\
\hline
2-100    & $61$  & $36$  & $5$  & $0$  & $0$ \\ 
101-200  & $176$ & $89$  & $15$ & $3$  & $0$ \\ 
201-300  & $263$ & $143$ & $26$ & $6$  & $1$ \\ 
301-400  & $368$ & $140$ & $26$ & $9$  & $0$ \\ 
401-500  & $393$ & $190$ & $32$ & $6$  & $0$ \\ 
501-600  & $511$ & $228$ & $37$ & $22$ & $8$ \\ 
601-700  & $593$ & $210$ & $32$ & $13$ & $0$ \\ 
701-800  & $769$ & $275$ & $49$ & $18$ & $2$ \\ 
801-900  & $632$ & $317$ & $46$ & $15$ & $1$ \\ 
901-1000 & $750$ & $256$ & $46$ & $13$ & $0$ \\ 
\end{tabular}
\caption{This table shows the number of chiral maps that admit periodicity, broken down into ranges of 100 edges.\label{tab:chiral_maps}}
\end{table}

The transition matrix of the dual of a map $X$ is given by $U^T$, where $U$ is the transition matrix of $X$; thus $U^{t} = I$ if and only if $(U^T)^t = I$. However the period of the periodicity may differ the state spaces are different. If $t > 0$ is odd and there is periodicity with period $2t$ in $X$, it is possible that there is periodicity with period $t$ in the dual. This did occur for some maps in the census, but only at $t=1$ for maps with a single face and more than one vertex, hence $U^2 = I$ (Corollary \ref{cor:1vertex_U^2=I}). Because of this, the dual maps were omitted from the table, and we count the maps for which $U^t = I$, since it applies to both maps under duality.

We did not find any rotary maps with perfect state transfer at time $t > 1$. There were, however, maps with perfect state transfer at time $t = 1$, and such maps satisfy both $U^2 = I$ and $|V| = 2$ by Theorem \ref{thm:transitive_pst}

Results from this paper were used to simplify our computations; for example, Corollary \ref{cor:kd_U^2=I} was used to quickly determine whether a map satisfies $U^2 = I$, and Corollary \ref{cor:statetransferGeneral} was used to simplify computations regarding perfect state transfer and periodicity.

\section{Further directions and open problems}\label{sec:conclusion}

It appears that perfect state transfer (abbreviated hereafter as PST for brevity) is a rare phenomenon. For every time $t > 0$ there exists at least one map which admits PST at time $t$, namely the toroidal $(1,2t)$-grid as discussed in Section \ref{sec:grids}. For odd $t$ there is also the toroidal $(2,t)$ grid with PST at time $t$. Besides toroidal grids, the only maps admitting PST have PST occurring at time $1$ and are maps with only two vertices. We have also searched all orientable embeddings of cubic graphs up to $12$ vertices but PST did not occur for any of these maps. It would be interesting to see more examples of maps that admit PST. In particular, we do not know of any simple graphs admitting PST, other than the planar embedding of $K_2$. To aid in the quest for PST, the following open problem would be of interest:

\begin{openprob}
Does there exist a constant upper bound on a time of PST, in a map admitting PST? 
\end{openprob}

In Corollary \ref{cor:easybd}, we give an upper bound on the time of PST in the special case when every vertex has PST with some other vertex at some time $\tau$. 
In the case of the continuous-time quantum walk, \cite{Kay2010} gives an upper bound on the time of perfect state transfer, if it occurs. It is natural to ask if analytic methods can also be applied in the discrete case for an analoguous result. 

We found PST in the toroidal grids with $n =1,2$, but did not find it anywhere else. The symmetry of these maps imply that if there is $uv$-PST for some vertices $u,v$, then the vertex set must partition into pair where there is  PST between every pair. We make the following conjecture. 

\begin{conjecture}
Let $n,m \geq 3$ such that $(n,m) \neq (4,4)$. Then the toroidal $(n,m)$-grid is not periodic at any time $\tau$. Consequently, there is also no perfect state transfer at any time $\tau$.
\end{conjecture}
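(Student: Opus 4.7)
The plan is to combine the Kronecker-product structure of the incidence matrix from Lemma~\ref{lem:toroidal_incidence} with the spectral constraint that $U^{2\tau}=I$ imposes on $\Chat\Chat^T$ via Theorem~\ref{thm:orth_decomposition}, reducing the conjecture to a short finite case analysis. First I would write down the eigenvalues of $\Chat\Chat^T$: since the $(n,m)$-grid has type $(4,4)$, we have $\Chat\Chat^T=\tfrac{1}{16}CC^T$, and by Lemma~\ref{lem:toroidal_incidence} the matrix $C$ is the Kronecker product of the circulants $P_n+I_n$ and $P_m+I_m$. Diagonalising these circulants in the Fourier basis gives
\[
\hat\lambda_{j,k}=\cos^2(\pi j/n)\cos^2(\pi k/m), \qquad (j,k)\in\ints_n\times\ints_m.
\]

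The key step, which I expect to be the main obstacle, is to upgrade Lemma~\ref{lem:CCT_rational} to the statement: periodicity at any time forces every eigenvalue of $\Chat\Chat^T$ to lie in $\{0,\tfrac14,\tfrac12,\tfrac34,1\}$. By Theorem~\ref{thm:periodic_implies_id}, periodicity at time $\tau$ implies $U^{2\tau}=I$; for each $\hat\lambda_i\notin\{0,1\}$, Theorem~\ref{thm:orth_decomposition} identifies the minimal polynomial of $U$ on $\cJ_i$ as $t^2-(4\hat\lambda_i-2)t+1$, which is irreducible over $\rats$ (its roots form a non-real conjugate pair on the unit circle) and must divide $t^{2\tau}-1$. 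Hence this minimal polynomial coincides with one of the degree-$2$ cyclotomic polynomials $\Phi_3$, $\Phi_4$, $\Phi_6$, and matching the coefficient of $t$ forces $\hat\lambda_i\in\{\tfrac14,\tfrac12,\tfrac34\}$.

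With the constraint in hand, the proof finishes quickly. Taking $k=0$ forces $\cos^2(\pi j/n)\in\{0,\tfrac14,\tfrac12,\tfrac34,1\}$ for all $j$, equivalently $\cos(2\pi j/n)\in\{0,\pm\tfrac12,\pm 1\}$; Niven's theorem applied at $j=1$ then gives $n\in\{1,2,3,4,6\}$, and the hypothesis $n,m\geq 3$ restricts us to $n,m\in\{3,4,6\}$. A direct check of the eight remaining pairs $(n,m)\in\{3,4,6\}^2\setminus\{(4,4)\}$ shows that $\hat\lambda_{1,1}=\cos^2(\pi/n)\cos^2(\pi/m)\in\{\tfrac{1}{16},\tfrac{1}{8},\tfrac{3}{16},\tfrac{3}{8},\tfrac{9}{16}\}$, which violates the constraint and rules out periodicity. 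The translation group $\ints_n\times\ints_m$ acts transitively on the vertex set, so Theorem~\ref{thm:transitive_pst}(i) upgrades any perfect state transfer at time $\tau$ to the condition $U^{2\tau}=I$, which is precisely what we have just excluded.
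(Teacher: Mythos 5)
The statement you are attempting is left as an open conjecture in the paper, so there is no proof of it to compare against; judged on its own, your argument has a genuine gap at exactly the step you flag as the main obstacle. Your claim that periodicity forces every eigenvalue of $\Chat\Chat^T$ into $\{0,\tfrac14,\tfrac12,\tfrac34,1\}$ does not follow from Theorem~\ref{thm:orth_decomposition} together with $U^{2\tau}=I$. The polynomial $p_i(t)=t^2-(4\hat\lambda_i-2)t+1$ is indeed the minimal polynomial of $U$ on $\cJ_i$ and divides $t^{2\tau}-1$, but it has rational coefficients only when $\hat\lambda_i$ is rational, and only then can you conclude it equals one of $\Phi_3,\Phi_4,\Phi_6$. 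That rationality hypothesis is exactly what Lemma~\ref{lem:CCT_rational} assumes, and it fails for general grids: already for $n=5$ one has $\hat\lambda_{1,0}=\cos^2(\pi/5)=(3+\sqrt5)/8$. Over $\re$ the polynomial $t^{2\tau}-1$ factors into quadratics $t^2-2\cos(\pi l/\tau)\,t+1$ for all $l$, so divisibility only tells you that $2\hat\lambda_{j,k}-1$ is the cosine of a rational multiple of $\pi$; it does not bound the degree of $\hat\lambda_{j,k}$ over $\rats$ (the rationality of $U$ merely forces Galois orbits of eigenvalues to appear together, as in the totient-bound lemma preceding Corollary~\ref{cor:easybd}, which explicitly allows primitive $s'$-th roots of unity with $\varphi(s')$ large).

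Concretely, your reduction then collapses: at $k=0$ one has $2\hat\lambda_{j,0}-1=\cos(2\pi j/n)$, which is automatically the cosine of a rational angle for every $n$, so these eigenvalues impose no restriction, Niven's theorem never gets any purchase, and the claimed reduction to $n,m\in\{3,4,6\}$ is unsupported. What $U^{2\tau}=I$ really demands is that for all $(j,k)$ the number $2\cos^2(\pi j/n)\cos^2(\pi k/m)-1$ be the cosine of a rational angle, i.e.\ that $\cos(2\pi j/n)+\cos(2\pi k/m)+\cos(2\pi j/n)\cos(2\pi k/m)-2\cos\theta=1$ have a solution with $\theta\in\pi\rats$; excluding this for all $n,m\geq 3$ with $(n,m)\neq(4,4)$ is a trigonometric diophantine problem that needs a Conway--Jones-type classification (or at least the known lists of algebraic numbers of small degree of the form $2\cos(\pi q)$), not just Niven. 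Your computation of the spectrum of $\Chat\Chat^T$ and your final step (vertex-transitivity plus Theorems~\ref{thm:transitive_pst} and~\ref{thm:periodic_implies_id} reduce perfect state transfer to periodicity to $U^{2\tau}=I$) are sound, and your finite check does dispose of the cases where all eigenvalues are rational, but the irrational cases --- the heart of the conjecture --- remain open in your write-up, which is presumably why the authors state it as a conjecture rather than a theorem.
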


Since perfect state transfer in vertex-face walks appears to be a rare phenomenon, we can turn our attention to the several other possible methods of state transfer. In the remainder of this section, we will discuss variations on the notion of PST for the vertex-face walk.

\subsection*{PST between vertices of different degrees}

Recall that in Section \ref{sec:pst}, we originally defined PST between vertices $u$ and $v$ at time $\tau > 0$ as
$U^\tau \Nhat\Ze_u = \Zx$
where $\Zx \in \cx^\cA$ is a unit length vector that satisfies $\Nhat\Ze_w \circ \Zx = 0$ for all $w \neq v$ (i.e. $\Zx$ is any superposition of the arcs incident to $v$).  We then restricted the definition of PST to be between vertices $u$ and $v$ of equal degree, in which $\Zx$ would necessarily have to equal $\Nhat\Ze_v$ (Lemma \ref{lem:uv_PST_simplified}). If we allow $u$ and $v$ to have different degrees however, $\Zx$ can be any superposition of the arcs that are incident to $v$. This raises the question: are there maps for which this more general type of $uv$-PST occurs between vertices of different degrees? As was discussed in Section 4, this can only happen if the degree of $v$ is smaller than the degree of $u$, as was discussed in Section \ref{sec:pst}. We do not know of any examples  of perfect state transfer between vertices of different degree in a vertex-face walk.

\subsection*{``Reverse'' PST }

Consider the unique genus $0$ embedding of the path $P_3$ as depicted in Figure \ref{fig:P3}. 

\begin{figure}[hbt]
    \centering
    \begin{tikzpicture}[scale=1]
    	\begin{pgfonlayer}{nodelayer}
    		\node [label=below:$v$] (0) at (0, 0) {};
    		\node [label=below:$u$] (1) at (-3, 0) {};
    		\node [label=below:$w$] (2) at (3, 0) {};
    		\node [label=right:$f$] (3) at (0.5,1) {};
    	\end{pgfonlayer}
    	\begin{pgfonlayer}{edgelayer}
    		\draw [style=directed, thick, color=vcolour, bend right=15] (1.center) to node [pos = 0.6, below] {$a_1$} (0.center);
    		\draw [style=directed, thick, color=vcolour, bend right=15] (2.center) to node [pos = 0.6, above] {$a_4$} (0.center);
    		\draw [style=directed, thick, color=kcolour, bend right=15] (0.center) to node [pos = 0.6, below] {$a_3$} (2.center);
    		\draw [style=directed, thick, color=kcolour, bend right=15] (0.center) to node [pos = 0.6, above] {$a_2$} (1.center);
    		\draw [gray, dashed] (1.center) to (0.center);
    		\draw [gray, dashed] (0.center) to (2.center);
    	\end{pgfonlayer}
    	\filldraw [black]
    	    (0) circle (2pt)
    	    (1) circle (2pt)
    	    (2) circle (2pt);

    \end{tikzpicture}
    \caption{The vertex-face walk on $P_3$, with in blue the initial state $\Nhat \Ze_v$ and in red the state $R\Nhat\Ze_v$, at time $1$.\label{fig:P3}}
\end{figure}
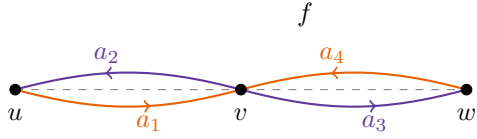

Because $P_3$ is a tree, we have $U^2 = I$. Like for all trees other than $K_2$, there is also no PST at time $1$. However, $U$ sends the uniform superposition of the arcs incident to $v$ to the reverse arcs:
\[
U\Nhat\Ze_v = R\Nhat\Ze_v,
\]
where $R$ is the arc-reversal matrix. The same is true for the central vertex of any star graph $K_{1,n}$. Generally, we can say a map $X$ admits \textsl{reverse $uv$-PST} at time $\tau$ if 
\[
U^\tau \Nhat \Ze_u = R\Nhat\Ze_v
\]
for vertices $u$ and $v$ at some time $\tau > 0$. 
Besides the star graphs, this happens for any embedding of the graph on two vertices with a number of parallel edges between them, such as the digon from Figure \ref{fig:digon&dual}. All of these examples  at time $\tau = 1$. 
A natural problem to ask would be the following. 
\begin{openprob}
What are the classes of orientable maps admit reverse $uv$-PST? Further, are there examples where it occurs for the first time at  time $\tau > 1$? 
\end{openprob}

\begin{figure}[htb]
    \centering
    \begin{tikzpicture}
    	\begin{pgfonlayer}{nodelayer}
    		\node (0) at (-4, 0) {};
    		\node (1) at (-2, 0) {};
    		\node (2) at (0, 0) {};
    		\node (3) at (2, 0) {};
    		\node (4) at (4, 0) {};
    		\node (5) at (-5, 1) {};
    		\node (6) at (5, 1) {};
    		\node (7) at (-5, -1) {};
    		\node (8) at (5, -1) {};
    		\node (9) at (-4, 1) {};
    		\node (10) at (-2, 1) {};
    		\node (11) at (0, 1) {};
    		\node (12) at (2, 1) {};
    		\node (13) at (4, 1) {};
    		\node (14) at (-4, -1) {};
    		\node (15) at (-2, -1) {};
    		\node (16) at (0, -1) {};
    		\node (17) at (2, -1) {};
    		\node (18) at (4, -1) {};
    		\node (19) at (-2, 2) {};
    		\node (20) at (-2, -2) {};
    		\node (21) at (2, 2) {};
    		\node (22) at (4, 2) {};
    		\node (23) at (2, -2) {};
    		\node (24) at (4, -2) {};
    		\node (25) at (-5, 0) {};
    		\node (26) at (5, 0) {};
    	\end{pgfonlayer}
    	\begin{pgfonlayer}{edgelayer}
    	    \clip (-5,-1) rectangle (5,1);
    		\draw [gray, dashed] (9.center) to (14.center);
    		\draw [gray, dashed] (10.center) to (15.center);
    		\draw [gray, dashed] (11.center) to (16.center);
    		\draw [gray, dashed] (12.center) to (17.center);
    		\draw [gray, dashed] (13.center) to (18.center);
    		\draw [gray, dashed] (25.center) to (26.center);
    		\draw [thick, style=directed, bend right=15, color=kcolour] (1.center) to (19.center);
    		\draw [thick, style=directed, bend right=15, color=kcolour] (1.center) to (20.center);
    		\draw [thick, style=directed, bend right=15, color=kcolour] (20.center) to (1.center);
    		\draw [thick, style=directed, bend right=15, color=kcolour] (19.center) to (1.center);
    		\draw [thick, style=directed, bend right=15, color=vcolour] (3.center) to (21.center);
    		\draw [thick, style=directed, bend right=15, color=vcolour] (23.center) to (3.center);
    		\draw [thick, style=directed, bend right=15, color=vcolour] (22.center) to (4.center);
    		\draw [thick, style=directed, bend right=15, color=vcolour] (4.center) to (24.center);
    		\draw [thick, style=directed, bend right=15, color=kcolour] (1.center) to (2.center);
    		\draw [thick, style=directed, bend right=15, color=kcolour] (1.center) to (0.center);
    		\draw [thick, style=directed, bend right=15, color=vcolour] (3.center) to (4.center);
    		\draw [thick, style=directed, bend right=15, color=vcolour] (4.center) to (3.center);
    		\draw [ultra thin] (5.center) to (6.center);
    		\draw [ultra thin] (7.center) to (8.center);
    		\draw [ultra thin] (7.center) to (5.center);
    		\draw [ultra thin] (8.center) to (6.center);
    	\end{pgfonlayer}
        \begin{pgfonlayer}{background}
            \fill[vcolour!30]
    	        (12.center) to (13.center) to (18.center) to (17.center) to (12.center);
        \end{pgfonlayer}
        \filldraw [black]
    	    (0) circle (2pt)
    	    (2) circle (2pt)
    	    (3) circle (2pt)
    	    (4) circle (2pt);
    	\filldraw [kcolour] (1) circle (2pt);
    \end{tikzpicture}

    \caption{Vertex-face PST in the toroidal $(1,5)$-grid. The arcs incident to the purple vertex are sent to the arcs incident the orange face in $3$ steps. The graph is drawn on the cut-open torus, where the opposite sides are identified; for visual simplicity, we have omitted the labels on the boundary of the torus.\label{fig:vxf_PST}}
\end{figure}
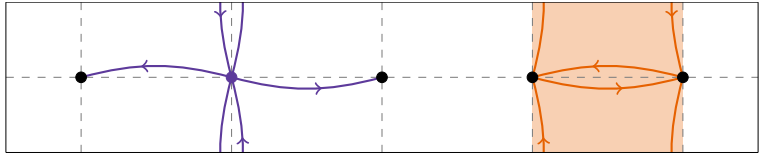

\subsection*{Vertex-face PST} Each step in the vertex-face walk on a map $X$ can be thought of as taking one step in $X$ and then one step in the dual $X^*$, each step corresponding to one of the two reflections that form the transition matrix $U$. It is hence natural to define the notion state transfer between a vertex and a face; we say that a map admits \textsl{vertex-face perfect state transfer} if 
\[
U^\tau \Nhat e_u = \Mhat e_f
\]
at some time $\tau > 0$ for some vertex $u$ and face $f$. 

For example, the toroidal $(1,5)$-grid admits vertex-face perfect state transfer between the `antipodal' vertex-face pairs, as shown in Figure \ref{fig:vxf_PST}. In this example, the map is periodic at time $5$ and Lemma \ref{lem:periodic_odd_time} gives us that $U^{3} \Nhat e_u $ is in the column space of $\Mhat$; in fact, in this case,   $U^{3} \Nhat e_u $  is a column of $\Mhat$. Thus, in a sense, we can view the vertex-face PST as a strengthening of the condition in Lemma  \ref{lem:periodic_odd_time}.

One can ask if it is easier to generate prolific examples of this form of PST. Other basic questions to investigate include the following:
\begin{itemize}
    \item If vertex-face PST occurs at time $\tau$, is there periodicity at time $2\tau$? 
    \item If vertex-face PST occurs between vertex $v$ and face $f$, can it also occur at $v$ and $f' \neq f$ at some other time? 
    \item What are some structural properties that $v$ and $f$ have to satisfy, when vertex-face PST occurs between $v$ and $f$? 
\end{itemize}
For the third question, we are motivated by our example, in which $v,f$ are antipodal, in some sense. 


\begin{thebibliography}{99}

\bibitem{AmbPorNah2015}
Andris Ambainis, Renato Portugal, and Nikolay Nahimov.
\newblock Spatial search on grids with minimum memory.
\newblock {\em Quantum Inf. Comput.}, 15(13{\&}14):1233--1247, 2015.

\bibitem{ApeGilJef2019}
Simon Apers, András Gilyén, and Stacey Jeffery.
\newblock A unified framework of quantum walk search.
\newblock \arxiv{1912.04233}, 2019.

\bibitem{BanCouGod2017}
Leonardo Banchi, Gabriel Coutinho, Chris Godsil, and Simone Severini.
\newblock Pretty good state transfer in qubit chains---the {H}eisenberg
  {H}amiltonian.
\newblock {\em J. Math. Phys.}, 58(3):032202, 9, 2017.

\bibitem{ChaZha2021}
Ada Chan and Hanmeng Zhan.
\newblock Pretty good state transfer in discrete-time quantum walks.
\newblock \arxiv{2105.03762}, 2021.

\bibitem{CheGodSub2022}
Qiuting Chen, Chris Godsil, Mariia Sobchuk, and Harmony Zhan.
\newblock Hamiltonians of bipartite walks.
\newblock \arxiv{2207.01673}, 2022.

\bibitem{Chi2010}
Andrew~M. Childs.
\newblock On the relationship between continuous- and discrete-time quantum
  walk.
\newblock {\em Communications in mathematical physics}, 294(2):581--603, 2010.

\bibitem{Con2012}
Marston Conder.
\newblock Rotary maps (on orientable or non-orientable surfaces) with up to
  1000 edges, 2012.
\newblock
  \url{https://www.math.auckland.ac.nz/~conder/RotaryMapsWithUpTo1000Edges.txt},
  Last accessed on 2022-11-07.

\bibitem{ConDob2001}
Marston Conder and Peter Dobcs\'{a}nyi.
\newblock Determination of all regular maps of small genus.
\newblock {\em J. Combin. Theory Ser. B}, 81(2):224--242, 2001.

\bibitem{Con2009}
Marston D.~E. Conder.
\newblock Regular maps and hypermaps of {E}uler characteristic {$-1$} to
  {$-200$}.
\newblock {\em J. Combin. Theory Ser. B}, 99(2):455--459, 2009.

\bibitem{CoutinhoGodsilGuoVanhove2}
G.~Coutinho, C.~Godsil, K.~Guo, and F.~Vanhove.
\newblock Perfect state transfer on distance-regular graphs and association
  schemes.
\newblock {\em Linear Algebra and its Applications}, 478:108 -- 130, 2015.

\bibitem{EllEll2022}
Mark~N. Ellingham and Joanna~A. Ellis-Monaghan.
\newblock A catalog of enumeration formulas for bouquet and dipole embeddings
  under symmetries.
\newblock {\em Symmetry}, 14(9), 2022.

\bibitem{Falk2013}
Matthew Falk.
\newblock Quantum search on the spatial grid.
\newblock \arxiv{1303.4127}, 2013.

\bibitem{GodGuoKem2020}
C.~D. Godsil, K.~Guo, M.~Kempton, G.~Lippner, and F.~M{\"{u}}nch.
\newblock State transfer in strongly regular graphs with an edge perturbation.
\newblock {\em J. Comb. Theory, Ser. {A}}, 172:105181, 2020.

\bibitem{GodKriSev2012}
Chris Godsil, Stephen Kirkland, Simone Severini, and Jamie Smith.
\newblock Number-theoretic nature of communication in quantum spin systems.
\newblock {\em Physical review letters}, 109(5):050502--050502, 2012.

\bibitem{GodZha2019}
Chris Godsil and Hanmeng Zhan.
\newblock Discrete-time quantum walks and graph structures.
\newblock {\em Journal of Combinatorial Theory. Series A}, 167:181--212, 2019.

\bibitem{GroTuc1987}
Jonathan~L. Gross and Thomas~W. Tucker.
\newblock {\em Topological graph theory}.
\newblock Wiley-Interscience Series in Discrete Mathematics and Optimization.
  John Wiley \& Sons, Inc., New York, 1987.
\newblock A Wiley-Interscience Publication.

\bibitem{numpy}
Charles~R. Harris, K.~Jarrod Millman, St{\'{e}}fan~J. van~der Walt, Ralf
  Gommers, Pauli Virtanen, David Cournapeau, Eric Wieser, Julian Taylor,
  Sebastian Berg, Nathaniel~J. Smith, Robert Kern, Matti Picus, Stephan Hoyer,
  Marten~H. van Kerkwijk, Matthew Brett, Allan Haldane, Jaime~Fern{\'{a}}ndez
  del R{\'{i}}o, Mark Wiebe, Pearu Peterson, Pierre G{\'{e}}rard-Marchant,
  Kevin Sheppard, Tyler Reddy, Warren Weckesser, Hameer Abbasi, Christoph
  Gohlke, and Travis~E. Oliphant.
\newblock Array programming with {NumPy}.
\newblock {\em Nature}, 585(7825):357--362, September 2020.

\bibitem{Kay2010}
Alastair Kay.
\newblock Perfect, efficient, state transfer and its application as a
  constructive tool.
\newblock {\em International Journal of Quantum Information}, 08(04):641--676,
  2010.

\bibitem{LovCooEve2010}
Neil~B. Lovett, Sally Cooper, Matthew Everitt, Matthew Trevers, and Viv Kendon.
\newblock Universal quantum computation using the discrete-time quantum walk.
\newblock {\em Phys. Rev. A}, 81:042330, Apr 2010.

\bibitem{MagNayRol2011}
Frédéric Magniez, Ashwin Nayak, Jeremie Roland, and Miklos Santha.
\newblock Search via quantum walk.
\newblock {\em SIAM Journal on Computing}, 40(1):142--164, 2011.

\bibitem{MohTho2001}
Bojan Mohar and Carsten Thomassen.
\newblock {\em Graphs on surfaces}.
\newblock  Johns Hopkins
  University Press, Baltimore, MD, 2001.

\bibitem{PatRagRun2005}
Apoorva Patel, K.~S. Raghunathan, and Pranaw Rungta.
\newblock Quantum random walks do not need a coin toss.
\newblock {\em Physical Review. A, Atomic, Molecular, and Optical physics},
  71(3), 2005.

\bibitem{Por2013}
Renato. Portugal.
\newblock {\em Quantum Walks and Search Algorithms}.
\newblock Quantum Science and Technology. Springer New York, New York, NY, 1st
  ed. 2013. edition, 2013.

\bibitem{Riv1974}
Theodore~Joseph Rivlin.
\newblock {\em The Chebyshev polynomials}.
\newblock Pure and applied mathematics ; 40. Wiley-Interscience, New York ;,
  1974.

\bibitem{San2008}
Miklos Santha.
\newblock Quantum walk based search algorithms.
\newblock In Manindra Agrawal, Dingzhu Du, Zhenhua Duan, and Angsheng Li,
  editors, {\em Theory and Applications of Models of Computation}, pages
  31--46, Berlin, Heidelberg, 2008. Springer Berlin Heidelberg.

\bibitem{Sze2004}
M~Szegedy.
\newblock Quantum speed-up of markov chain based algorithms.
\newblock In {\em 45th Annual IEEE Symposium on Foundations of Computer
  Science}, pages 32--41, Los Alamitos CA, 2004. IEEE.

\bibitem{sagemath}
{The Sage Developers}.
\newblock {\em {S}ageMath, the {S}age {M}athematics {S}oftware {S}ystem
  ({V}ersion 9.0)}, 2020.
\newblock {\tt https://www.sagemath.org}.

\bibitem{YanJin2022}
Qi~Yan and Xian'an Jin.
\newblock {$A$}-trails of embedded graphs and twisted duals.
\newblock {\em Ars Math. Contemp.}, 22(2):Paper No. 6, 16, 2022.

\bibitem{Zha2020}
Hanmeng Zhan.
\newblock Quantum walks on embeddings.
\newblock {\em Journal of Algebraic Combinatorics}, 53(4):1187--1213, 2020.

\end{thebibliography}

\end{document}